\documentclass[reqno, 12pt]{article}

\pdfoutput=1

\usepackage{enumerate}
\usepackage{latexsym}
\usepackage[centertags]{amsmath}
\usepackage{amsfonts}
\usepackage{amssymb}
\usepackage{amsthm}
\usepackage{mathtools}
\usepackage{newlfont}
\usepackage{graphics}
\usepackage{color}
\usepackage{float}
\usepackage{diagbox}
\usepackage{tocloft}
\usepackage{titlesec}
\usepackage{booktabs,longtable,array}
\usepackage{extpfeil}
\usepackage{centernot}
\usepackage[pagebackref,colorlinks=true,linkcolor=blue,citecolor=red,urlcolor=blue]{hyperref}
\usepackage[linesnumbered,ruled,vlined]{algorithm2e}
\usepackage{url}
\usepackage[T1]{fontenc}
\usepackage{lmodern}
\usepackage{microtype}
\usepackage[nameinlink,noabbrev,capitalize]{cleveref}
\usepackage{longtable}
\usepackage{rotating}
\usepackage{multirow}
\usepackage{extarrows}
\usepackage[sort,compress,numbers]{natbib}
\usepackage[utf8]{inputenc}
\numberwithin{equation}{section}

\newtheorem{theorem}{Theorem}[section]
\newtheorem{proposition}[theorem]{Proposition}
\newtheorem{lemma}[theorem]{Lemma}
\newtheorem{corollary}[theorem]{Corollary}
\theoremstyle{definition}

\newtheorem{remark}[theorem]{Remark}
\newtheorem{example}[theorem]{Example}

\newtheorem{problem}[theorem]{Problem}

\allowdisplaybreaks[4]

\SetKwInput{KwInput}{Input}                
\SetKwInput{KwOutput}{Output}              

\newcommand{\K}{\mathbb K}

\newcommand{\Ring}{R}

\title{Hankel Transform and $(\alpha,\beta)$ Somos-4 Sequences}

\author{Feihu Liu$^{\color{blue} \dag}$, Ying Wang$^{\color{blue} \ddag}$, and Zihao Zhang$^{\color{blue} \S}$
\\[2mm]
{\small $^{\color{blue} \dag}$ Center for Combinatorics, LPMC,}\\[-0.8ex]
{\small Nankai University, Tianjin 300071, P.R.~China}\\
{\small $^{\color{blue} \ddag}$ School of Mathematics and Statistics,}\\[-0.8ex]
{\small North China University of Water Resources and Electric Power, Zhengzhou, 450045, P.R.~China}\\
{\small $^{\color{blue} \S}$ School of Mathematics and Statistics,}\\[-0.8ex]
{\small Beijing Institute of Technology, Beijing 102400, P.R.~China}\\
{\small {\color{blue} $^\dag$} Email address: liufeihu7476@163.com}\\
{\small {\color{blue} $^\ddag$} Email address: wangying2019@ncwu.edu.cn}\\
{\small {\color{blue} $^\S$} Email address: zihao-zhang@foxmail.com}\\
}

\date{\today}

\begin{document}

\maketitle

\begin{abstract}
An $(\alpha,\beta)$ Somos-$4$ sequence $S_n$ is defined by the recurrence $S_nS_{n-4}=\alpha S_{n-1}S_{n-3}+\beta S_{n-2}^2$ ($n\geq 4$), with suitable initial values, where $\alpha$ and $\beta$ are constant parameters. A widely studied question is the following: When does the Hankel transform of a generating function become an $(\alpha,\beta)$ Somos-4 sequence? In particular, how can $\alpha$ and $\beta$ be derived for such a function? A sufficient condition for this problem has been established by Wang and Zhang. In this paper, we obtain the following three main results.

(i): We extend the Wang--Zhang sufficient condition by working over the rational function field. Then we combine this result with the Sulanke--Xin quadratic transformation to resolve all of Barry's currently unsolved $(\alpha,\beta)$ Somos-4 conjectures, which arise in diverse contexts, including generalized Catalan recurrences, Riordan arrays, generalized Bernstein arrays, and elliptic curves.

(ii): We show that the odd and even subsequences of an $(\alpha,\beta)$ Somos-4 sequence are again $(\alpha,\beta)$ Somos-4 sequences with transformed parameters. This is employed to establish Barry's Hurwitz transform conjecture.

(iii): Using the theory of orthogonal polynomials, we prove a Hankel determinant formula and thereby prove a conjecture related to the $(\alpha,\beta)$ Somos-4 sequence. In addition, we prove some conjectures on formulas for periodic Hankel determinants.
\end{abstract}

\noindent
\begin{small}
\emph{2020 Mathematics subject classification}: Primary 05A15;  Secondary 15A15, 30B70, 11B37.
\end{small}

\noindent
\begin{small}
\emph{Keywords}: Hankel determinant; Somos sequence; Hurwitz transform; Continued fraction; Catalan generating function; Riordan array; Recurrence relation; Elliptic curve.
\end{small}

\tableofcontents

\section{Introduction}

Somos sequences were introduced by Michael Somos in 1989 as a curious generalisation of the recurrence that generates the numbers
$1,1,1,1,2,3,7,23,59,\dots$; see \cite{Somos89} or \cite[A006720]{Sloane23}. 
It is now called the \emph{Somos-$4$ sequence}.
The \emph{Somos‑$k$ sequence} $(S_n)_{n\ge 0}$ is defined by the initial
conditions $S_0 = S_1 = \dots = S_{k-1} = 1$,
and for all $n\ge k$,
$$S_nS_{n-k}=\sum_{j=1}^{\lfloor k/2\rfloor} S_{n-j}S_{n-(k-j)}.$$
The $k$ in the name is the order of the recurrence relation.
The most striking feature of these sequences is that, although they are defined by a rational recursion, for suitably chosen initial values they consist entirely of integers.

Somos-$k$ sequences have been extensively studied due to their connections with cluster algebras \cite{FZ02,FZ03}, elliptic curves \cite{Silverman2009,Hone2021}, and discrete integrable systems \cite{Hone05,ChangHuXin2015}. For further related results, we refer the reader to \cite{Hone07,BogdanovEtAl2023,Robinson92}.
This paper mainly concerns the connection between the Somos-$4$ sequence and Hankel determinants, in particular the $(\alpha,\beta)$ Somos-$4$ sequences.

An \emph{$(\alpha,\beta)$ Somos-$4$ sequence $S_n$} is a sequence such that 
\begin{align}\label{eq:somos4-bilinear}
S_nS_{n-4}=\alpha S_{n-1}S_{n-3}+\beta S_{n-2}^2 \qquad(n\geq4),
\end{align}
for appropriate initial values, where $\alpha$ and $\beta$ are constant parameters.
It has been studied in \cite{Hone05,Shipsey,Ward1948,WardDuck} and so on.
When $\alpha=\beta=1$ and $S_0=S_1=S_2=S_3=1$, we obtain the ordinary Somos-$4$ sequence.

We now introduce another object of study closely related to Somos-4: the Hankel transform.
Let \(F(x)=\sum_{n\geq0}f_nx^n\) be a formal power series. 
The \emph{Hankel determinant} of order $n$ of $F(x)$ is defined by 
\[ H_n(F)=\det(f_{i+j})_{0\leq i,j<n},\qquad H_0(F)=1.
\]
The sequence $(H_n)_{n\ge0}$ is called the \emph{Hankel transform} of the sequence $(f_n)_{n\ge0}$.

Hankel determinants have drawn significant attention from scholars in combinatorics and number theory.
For instance, Hankel determinants for specific sequences have been explored in several works \cite{chien2022hankel,Elouafi,Mu-Wang,Mu-Wang-Yeh,QH-Hou,Wang-Xin,ChangHu2013,ChangLiu2026,HanTAMS}, among others.
One has developed various continued fraction methods for computing Hankel determinants, including:
S-continued fraction (or J-continued fraction) \cite{Krattenthaler05, FlajoletDM}, C-continued fraction \cite{Cigler-C13,PaulBarry}, Gessel--Xin's continued fraction method \cite{Gessel-Xin}, Sulanke-Xin's continued fraction method \cite{SulankeXin2008}, the Hankel continued fraction \cite{HanGuoNiu}, and the shifted periodic continued fraction \cite{Wang-Xin-Zhai}.

The following problem, involving Hankel transforms and $(\alpha,\beta)$ Somos-4 sequences, has attracted considerable attention.
\begin{problem}\label{Openproblem}
When does the Hankel transform of a generating function $F(x)$ become an $(\alpha,\beta)$ Somos-4 sequence?
In particular, how can $\alpha$ and $\beta$ be derived for such a function?
\end{problem}

Regarding \cref{Openproblem}, Barry observed many instances in which Hankel transforms of quadratic algebraic generating functions satisfy \eqref{eq:somos4-bilinear}; see \cite{Barry2010,Barry2011Invariant,Barry2011Narayana,Barry2012Hurwitz,Barry2012Bernstein,
Barry2019AMatrix,Barry2019Pseudo,Barry2021Catalan,Barry2022Conjectures,Barry2023Elliptic,Barry2023Motzkin}.
Several of these observations were stated as parameterized conjectures.
Rajkovi\'c, Barry, and Savi\'c \cite{RajkovicBarrySavic2012} also proposed a conjecture on the Somos-4 sequence.

Some of the many conjectures in the literatures cited above have been resolved. 
For example, 
Chang and Hu~\cite{ChangHu2012} proved the convolution family from \cite{Barry2010} by block-Hankel identities.
Xin~\cite{Xin2009} also proved the famous Somos-4 conjecture proposed by Somos.
Wang and Zhang~\cite{WangZhang2024} proved the four $(\alpha,\beta)$ Somos-4 conjectures of \cite{Barry2022Conjectures} by
extracting a sufficient condition from Xin's recursion system \cite{Xin2009}.

This sufficient condition is now referred to as the \emph{Wang--Zhang condition}, see \cref{thm:wz}.
However, the Wang--Zhang sufficiency theorem requires that some nonzero conditions hold.
Inspired by the proof of the Wang--Zhang theorem, our \textbf{first} contribution is to prove an extension of this theorem over the field of rational functions. This broadens the scope of the Wang--Zhang conditions.

\begin{theorem}\label{prop:wz-universal}
Let \(\mathcal A\) be a commutative ring with identity, and let \(a_0,b_0,c_0,d_0,f_0\in\mathcal A\). There is a unique series
\(Q(x)\in\mathcal A[[x]]\) satisfying
\begin{equation}\label{eq:wz-cross-multiplied}
 (1+c_0x+d_0x^2)Q(x)+x^2(-1+f_0x)Q(x)^2=a_0+b_0x.
\end{equation}
Equivalently, \(Q(x)\) satisfies \eqref{eq:wz-form}.  Set $\Delta=a_0(c_0+f_0)-b_0$, $\Gamma=a_0^3-a_0^2d_0+a_0b_0c_0-b_0^2$,
and
\begin{equation}\label{eq:wz-polynomial-parameters}
 \alpha=\Delta^2,\qquad\beta=(a_0-d_0-c_0f_0-f_0^2)\Gamma-a_0\Delta^2.
\end{equation}
Then
\begin{equation}\label{eq:wz-universal-somos}
 H_n(Q)H_{n-4}(Q) =\alpha H_{n-1}(Q)H_{n-3}(Q) +\beta H_{n-2}(Q)^2\qquad(n\geq4).
\end{equation}
If \(a_0\) is a unit in \(\mathcal A\) (in particular, if \(\mathcal A\) is a field and \(a_0\ne0\)), then the definitions of \(f_1\) and \(a_1\) in \cref{thm:wz} make sense, and \eqref{eq:wz-polynomial-parameters} agrees with \eqref{eq:wz-alpha}--\eqref{eq:wz-beta}.
\end{theorem}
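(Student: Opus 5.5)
The plan is to reduce the ring statement to the field statement \cref{thm:wz} by a universality (generic-point) argument, so no Hankel determinant is ever computed directly.

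\textbf{Existence and uniqueness of $Q$.} Compare coefficients of $x^n$ in \eqref{eq:wz-cross-multiplied}. The coefficient of $Q$ has constant term $1$, and the quadratic term carries a factor $x^2$. Hence the coefficient $q_n$ is determined by $q_0,\dots,q_{n-1}$ through an integral polynomial expression in $a_0,b_0,c_0,d_0,f_0$; explicitly $q_0=a_0$ and $q_1=b_0-c_0a_0$. This gives a unique solution in $\mathcal A[[x]]$. The same recursion shows that every $q_n$ is a universal polynomial $P_n\in\mathbb Z[A,B,C,D,F]$ evaluated at $(a_0,b_0,c_0,d_0,f_0)$. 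Consequently $H_n(Q)$ is the specialization of a universal polynomial $h_n\in\mathbb Z[A,B,C,D,F]$. The equivalence with \eqref{eq:wz-form} is just the rearrangement of that continued-fraction-type functional equation by clearing denominators, which is legitimate because the relevant denominator has constant term $1$.

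\textbf{Reduction to the generic case.} Let $R=\mathbb Z[A,B,C,D,F]$ with independent indeterminates, and let $K=\mathbb Q(A,B,C,D,F)$. It suffices to prove the identity
\[
h_nh_{n-4}=\alpha h_{n-1}h_{n-3}+\beta h_{n-2}^2
\]
in $R$, with $\alpha,\beta$ given by \eqref{eq:wz-polynomial-parameters} as elements of $R$. Any ring homomorphism $R\to\mathcal A$ sending the indeterminates to $(a_0,b_0,c_0,d_0,f_0)$ then yields \eqref{eq:wz-universal-somos}. Since $R\subset K$, it is enough to prove the identity in $K$. In $K$ the element $a_0=A$ is nonzero, as are all the other generic quantities that \cref{thm:wz} requires to be nonzero. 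These include $\Delta$, $\Gamma$, and whatever auxiliary quantities such as $f_1,a_1$ appear there, each being a nonzero rational function (check by a single specialization). So \cref{thm:wz} applies over $K$. It gives the Somos-4 relation with $\alpha,\beta$ expressed through \eqref{eq:wz-alpha}--\eqref{eq:wz-beta} in terms of $a_0,\dots,f_0$ and the derived $f_1,a_1$.

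\textbf{Identifying the parameters, and the main obstacle.} The remaining step is to substitute the definitions of $f_1$ and $a_1$ from \cref{thm:wz}, which are rational in the data with denominators powers of $a_0$, into \eqref{eq:wz-alpha}--\eqref{eq:wz-beta}. One then simplifies to show they equal $\Delta^2$ and $(a_0-d_0-c_0f_0-f_0^2)\Gamma-a_0\Delta^2$. This is a finite rational-function computation; I expect the $a_0$-denominators to cancel completely, which is exactly what makes the polynomial formulas valid over any ring. The same computation, done over any field (or ring) where $a_0$ is a unit, proves the final sentence of the theorem.

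The main obstacle is in this step. One difficulty is confirming that the nondegeneracy hypotheses of \cref{thm:wz} are satisfied generically, which might require that certain Hankel determinants of $Q$ be nonzero in $K$. I would handle this by exhibiting one integer specialization where they are nonzero. The other difficulty is the bookkeeping of the algebraic simplification. I would first try direct expansion, checking it against small cases such as $n=4,5$ computed from $q_0,\dots,q_6$.
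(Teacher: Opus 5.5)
Your proposal is correct and follows essentially the same route as the paper. Like you, the paper builds the universal series over $\mathbb Z[A,B,C,D,F]$, proves the Somos identity in the fraction field, rewrites $\alpha,\beta$ as polynomials (via $c_0+f_0+f_1=\Delta/a_0$ and $a_1=\Gamma/a_0^2$), and then specializes to $\mathcal A$. The one difference is that the paper re-derives the Wang--Zhang recurrence instead of citing \cref{thm:wz}, because that theorem's proof silently needs every iterated leading coefficient $A_j$, equivalently every $H_n(Q)$, to be nonzero.

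Your specialization idea handles exactly this point, though you did not name a specialization that works for all $n$ at once. Take $(A,B,C,D,F)=(1,0,0,0,0)$, the point the paper also uses. There $Q=1/(1-x^2Q)=\mathcal C(x^2)$, and \cref{lem:jacobi-step} gives $H_n(Q)=1$ for every $n$.
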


\cref{prop:wz-universal} provides a partial answer to \cref{Openproblem}.
In this paper, \cref{prop:wz-universal} will be fully exploited to investigate the many $(\alpha,\beta)$ Somos‑4 conjectures mentioned above.

Motivated by the Hurwitz transform \cite{Barry2012Hurwitz}, we prove the following theorem.
This is the \textbf{second} main contribution of this paper.
It shows that the even and odd subsequences of an $(\alpha,\beta)$ Somos-4 sequence also form a Somos-4 sequence.
\begin{theorem}\label{lem:parity-decimation}
Let \((S_n)_{n\geq 0}\) be an $(\alpha,\beta)$ Somos-4 sequence such that $S_n$ satisfies
\begin{align*}
S_{n+2}S_{n-2}=\alpha S_{n+1}S_{n-1}+\beta S_n^2\quad \text{ for }\quad n\geq 2.
\end{align*}
Thus the terms occurring in the quotient variables below are assumed to be nonzero. Set
\[x_n=\frac{S_{n+1}S_{n-1}}{S_n^2}\quad\text{for}\quad n\geq 1,\qquad
 J=x_{n-1}x_n+\alpha\left(\frac1{x_{n-1}}+\frac1{x_n}\right)+\frac{\beta}{x_{n-1}x_n},
\]
and \(\Lambda=\alpha^2+\beta J\).  Then, for \(\varepsilon\in\{0,1\}\), the sequence \((S_{2m+\varepsilon})_{m\geq 0}\) is a
\[\left(\Lambda^2,\,\beta\bigl(\beta^3-\Lambda(\Lambda+\alpha^2)\bigr)\right)
\]
Somos-4 sequence. 
\end{theorem}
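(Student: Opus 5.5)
Our plan is to work with the quotient variables. Dividing the recurrence $S_{n+2}S_{n-2}=\alpha S_{n+1}S_{n-1}+\beta S_n^2$ by $S_n^2$ and noting $S_{n+2}S_{n-2}/S_n^2=x_{n+1}x_n^2x_{n-1}$ gives the QRT-type map
\[
x_{n+1}x_{n-1}=\frac{\alpha}{x_n}+\frac{\beta}{x_n^2}.
\]
The first step is to check that $J$ is a conserved quantity. We substitute $x_{n+1}=(\alpha x_n+\beta)/(x_n^2x_{n-1})$ into $J(x_n,x_{n+1})$ and verify that it equals $J(x_{n-1},x_n)$. This is a direct rational-function identity; $J$ is the standard invariant of this symmetric QRT map. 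Consequently $J$ and $\Lambda=\alpha^2+\beta J$ are independent of $n$.

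Next, for the subsequence $T_m=S_{2m+\varepsilon}$ we want constants $A,B$ with $T_{m+2}T_{m-2}=A\,T_{m+1}T_{m-1}+B\,T_m^2$. With $N=2m+\varepsilon$, we divide by $S_N^2$ and express each ratio through the $x_k$:
\[
\frac{S_{N+2}S_{N-2}}{S_N^2}=x_{N+1}x_N^2x_{N-1},\qquad
\frac{S_{N+4}S_{N-4}}{S_N^2}=x_{N+3}x_{N+2}^2x_{N+1}^3x_N^4x_{N-1}^3x_{N-2}^2x_{N-3}.
\]
The target identity is therefore
\[
x_{N+3}x_{N+2}^2x_{N+1}^3x_N^4x_{N-1}^3x_{N-2}^2x_{N-3}=A\,x_{N+1}x_N^2x_{N-1}+B.
\]
Using the map, we write every $x_k$ as a rational function of the pair $(u,v)=(x_{N-1},x_N)$, going forward to $x_{N+3}$ and backward to $x_{N-3}$ via $x_{n-1}=(\alpha x_n+\beta)/(x_n^2x_{n+1})$. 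Both sides then become rational functions of $u,v$.

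The claim is that the left side equals $\Lambda^2\,(\alpha v+\beta)/v+\beta(\beta^3-\Lambda(\Lambda+\alpha^2))$. Here we use $x_{N+1}x_N^2x_{N-1}=(\alpha v+\beta)/v\cdot v=\alpha v+\beta$, so the right side is just linear in $v$ with coefficients depending on $J(u,v)$. To simplify, we set $y_n=x_nx_{n+1}$, equivalently $S_{n+2}S_{n-1}/(S_{n+1}S_n)$. Then the left side factors into products of consecutive $y$'s, which shortens the expansion considerably.

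The main obstacle is this verification: the backward and forward iterates have growing degree, and the identity holds only after $J(u,v)$ is substituted. We would do the check by clearing denominators and comparing polynomials in $u,v$ (a computer-algebra verification is acceptable). A cleaner alternative, if the algebra is unwieldy, is to use the elliptic-curve parametrization $S_n=AB^n\sigma(z_0+nz)/\sigma(z)^{n^2}$. The decimated sequence corresponds to shift $2z$, hence is automatically an $(\alpha',\beta')$ Somos-4 sequence, and $(\alpha',\beta')$ can then be computed from any four consecutive terms. Either way, because $J$ is constant, $A=\Lambda^2$ and $B$ are constants, which finishes the argument for both parities $\varepsilon\in\{0,1\}$.
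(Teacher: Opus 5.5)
Your proposal is correct and follows essentially the paper's route: derive the QRT map, show $J$ is conserved, and verify the decimated relation algebraically. Your factorization into consecutive products $x_kx_{k+1}$ turns the left side into $(\alpha x_{N+2}+\beta)(\alpha x_N+\beta)^2(\alpha x_{N-2}+\beta)$, which is exactly the paper's variable $y_m=\alpha x_{2m+\varepsilon}+\beta$; the paper then avoids brute force in $(u,v)$ by using $x_{N+1}x_{N-1}=(\alpha x_N+\beta)/x_N^2$ and $x_{N+1}+x_{N-1}=(Jx_N-\alpha)/x_N^2$, so that only $x_N$ and $J$ remain. One slip: as your own next sentence computes, the target right-hand side is $\Lambda^2(\alpha v+\beta)+\beta\bigl(\beta^3-\Lambda(\Lambda+\alpha^2)\bigr)$, with no division by $v$.
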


Applying \cref{lem:parity-decimation}, we establish the general form of Barry's conjecture \cite[Example~7]{Barry2012Hurwitz}.

The \textbf{third} main contribution of this paper is to prove several Hankel determinant conjectures, which are closely related to the $(\alpha,\beta)$ Somos-4 sequence. This contribution consists of two main parts.
One of them is a Hankel determinant formula derived from orthogonal polynomial theory (see \cref{lem:constant-tail-border}). This result is then employed to prove the conjectures in~\cite{Barry2021Catalan}.
Another is to prove several conjectured formulas for periodic Hankel determinants (see \cref{thm:2021-c39,thm:new-a136576}).

The \textbf{fourth} contribution of this paper is to settle many conjectures on $(\alpha,\beta)$ Somos‑4 sequences related to Hankel determinants. They appear respectively in various forms and backgrounds related to Riordan arrays, Bernstein arrays, A-matrices, elliptic-curve families, and so on.
In particular, the authors summarize the main results of this paper and various previously proved results concerning the Somos‑4 sequence conjecture, and present them in the list below.

\begin{longtable}{@{}
 >{\raggedright\arraybackslash}p{0.10\textwidth}
 >{\raggedright\arraybackslash}p{0.55\textwidth}
 >{\raggedright\arraybackslash}p{0.30\textwidth}@{}}
\toprule
Year & Source assertion & Proved\\
\midrule
\endhead
2000 & Somos, \cite{Somos2000}. & Xin, \cite{Xin2009}.\\

2007 & Gosper and Schroeppel, \cite[Conjecture 4; Conjecture 4.5]{Gosper2007}. & Ma, \cite{Ma2010}.\\

2008 & OEIS, \cite[A136576; A136577]{Sloane23} (Determinant conjecture). & \cref{thm:new-a136576}.\\

2009 & OEIS, \cite[A160702; A160703]{Sloane23}. & \cref{thm:new-a160702}; \cref{cor:new-a160703}.\\

2010 & Barry, \cite[Conjecture~10]{Barry2010}. & Chang--Hu, \cite{ChangHu2012}; \cref{cor:barry2010}.\\

2011 & Barry, \cite[Conjecture~26]{Barry2011Narayana}. & \cref{thm:cubic-master}.\\

2011 & Barry, \cite[Conjecture~5]{Barry2011Invariant}. & \cref{Corollary-Conjecture-5-Bary}.\\

2012 & Barry, \cite[Example~7]{Barry2012Hurwitz}. & \cref{thm:new-hurwitz}; \cref{Corollary-Hurwitz}.\\

2012 & Rajkovi\'c--Barry--Savi\'c, \cite[Section~7]{RajkovicBarrySavic2012}. & \cref{Corollary-RBS-111}.\\

2012 & Barry, \cite[Page~11; Page~12; Page~13]{Barry2012Bernstein} and \cite[Conjecture 15; Conjecture 16; Conjecture 17]{Barry2012Bernstein}. & \cref{thm:bernstein-catalan}; \cref{thm:bernstein-c16}; \cref{cor:bernstein-c15}; \cref{thm:bernstein-c171}; \cref{thm:bernstein-c172}; \cref{thm:bernstein-c173}.\\

2019 & Barry, \cite[Conjecture~6; Conjecture~14]{Barry2019AMatrix}. & \cref{thm:amatrix-c6}; \cref{thm:amatrix-c14}.\\

2019 & Barry, \cite[Page~11; Page~17]{Barry2019Pseudo}. & \cref{thm:pseudo-tail}; \cref{thm:pseudo-asequence}.\\

2021 & Barry, \cite[Conjecture~14; Conjecture~15]{Barry2021Catalan}. & \cref{thm:2021-c14}; \cref{thm:2021-c15}.\\

2021 & Barry, \cite[Conjectures~24; Conjecture~25]{Barry2021Catalan}. & \cref{thm:2021-c24}; \cref{thm:2021-c25}.\\

2021 & Barry, \cite[Conjecture~26; Conjecture~30; Conjecture~39]{Barry2021Catalan} (Determinant conjecture). & \cref{thm:2021-c26-c30}; \cref{thm:2021-c39}.\\

2021 & Barry, \cite[Page~27]{Barry2021Catalan}. & \cref{cor:2021-p27}.\\

2021 & Barry, \cite[Conjecture~43; Conjecture~44]{Barry2021Catalan}. & \cref{thm:2021-c43}; \cref{thm:2021-c44}.\\

2021 & Barry, \cite[Page~40]{Barry2021Catalan}. & \cref{thm:2021-p39}.\\

2022 & Barry, \cite[Page~3]{Barry2022Conjectures} & \cref{thm:2022-unnumbered}.\\

2022 & Barry, \cite[Conjecture~2; Conjecture~3; Conjecture~4; Conjecture~5]{Barry2022Conjectures} & Wang--Zhang, \cite{WangZhang2024}.\\

2023 & Barry, \cite[Conjectures~2]{Barry2023Elliptic} & \cref{thm:elliptic-2023}; \cref{thm:elliptic-Division-2023}.\\

2023 & Barry, \cite[Conjecture~8; Example 14]{Barry2023Motzkin} & \cref{Conjecture-8-Barry-ARST}; \cref{thm:new-motzkin-9-5}.\\

2025 &  Ovsienko--Pedon, \cite[Conjecture 1.9]{OvsienkoPedon} & Han--Pedon, \cite{Han-Pedon}.\\

\bottomrule
\end{longtable}

To the best of our knowledge, we have perhaps proved all of Barry's unsolved $(\alpha,\beta)$ Somos-4 conjectures.

The paper is organized as follows.
In \cref{Preliminaries}, we give some lemmas that will be used later. In particular, we shall extend the sufficient conditions proposed by Wang--Zhang~\cite{WangZhang2024}.
In \cref{sec:new-hurwitz}, we prove \cref{lem:parity-decimation} and the Hurwitz transform conjecture.
\cref{sec:catalan-2021} is mainly devoted to proving a determinant formula (\cref{lem:constant-tail-border}) and the $(\alpha,\beta)$ Somos-4 conjecture associated with the Catalan recurrence.
In \cref{Section-Elliptic-PF,Section-Tree,sec:bernstein,Section-Five}, we mainly prove the theorems and corollaries listed in the above table by using \cref{prop:wz-universal} and Sulanke-Xin quadratic transformation $\tau$ \cite[Proposition~4.1]{SulankeXin2008}.
\cref{Section-Seven-Hankel-Evalu} mainly resolves some conjectures on expressions involving periodic Hankel determinants.
\cref{sec:new-motzkin} proves a conjecture on generating functions as a byproduct.

\section{A sufficient condition}\label{Preliminaries}

This section mainly introduces some lemmas and previously known results.
These results will be used repeatedly in later sections.
For completeness, we sometimes also provide simple proofs for some of the known results.

\subsection{Preliminaries}

We now recall the \emph{Catalan generating function} $\mathcal{C}(x)$. It is defined as
$$\mathcal{C}(x)=\frac{1-\sqrt{1-4x}}{2x}.$$
The function $\mathcal{C}(x)$ satisfies \(\mathcal{C}(x)=1+x\mathcal{C}(x)^2\).
We know that $\mathcal{C}(x)$ can be expressed as the continued fraction 
\[\mathcal{C}(x)=\cfrac1{1-
       \cfrac{x}{1-
       \cfrac{x}{1-\ddots}}}.
\]
The Catalan generating function will be used frequently.

We work over a field \(\K\) of characteristic zero.  Every algebraic equation below is understood to select the unique formal power series with the stated constant term.

\begin{lemma}\label{lem:scalings}
Let \(F(x)=\sum_{n\geq0}f_nx^n\) be a formal power series, and let \(u,v\in\K\).  Then
\begin{align*}
 H_n(uF(x))=u^nH_n(F(x)),\qquad
 H_n(F(vx))=v^{n(n-1)}H_n(F(x)).
\end{align*}
\end{lemma}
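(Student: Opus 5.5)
Both identities follow directly from the multilinearity of the determinant applied to the Hankel matrix $(f_{i+j})_{0\le i,j<n}$. I would treat the two scalings separately.

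For the first identity, write $uF(x)=\sum_{n\ge0}(uf_n)x^n$. Its Hankel matrix of order $n$ is $(uf_{i+j})_{0\le i,j<n}=u\,(f_{i+j})_{0\le i,j<n}$, i.e.\ every entry is multiplied by $u$. Pulling the factor $u$ out of each of the $n$ rows gives $H_n(uF)=u^nH_n(F)$. This also covers $n=0$, where both sides equal $1$.

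For the second identity, the coefficient of $x^k$ in $F(vx)$ is $v^kf_k$, so the relevant matrix is $(v^{i+j}f_{i+j})_{0\le i,j<n}$. Factor $v^{i+j}=v^i\cdot v^j$ and write the matrix as $D\,(f_{i+j})\,D$ with $D=\mathrm{diag}(1,v,\dots,v^{n-1})$; equivalently, pull $v^i$ out of row $i$ and $v^j$ out of column $j$. Then
\[
H_n(F(vx))=\det(D)^2H_n(F)=\Bigl(v^{\sum_{i=0}^{n-1}i}\Bigr)^2H_n(F)=v^{n(n-1)}H_n(F),
\]
using $\sum_{i=0}^{n-1}i=n(n-1)/2$.

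The whole argument is elementary. The only point to watch is that multiplicativity of determinants for the factorization $D\,(f_{i+j})\,D$ holds over any commutative ring, so nothing needs $v\neq0$ or invertibility. I therefore expect no real obstacle beyond the exponent bookkeeping $2\cdot\binom{n}{2}=n(n-1)$.
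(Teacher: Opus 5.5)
Your proposal is correct and is essentially the paper's proof: factor $u$ out of every row for the first identity, and pull $v^i$ out of row $i$ and $v^j$ out of column $j$ for the second, giving the exponent $2\binom{n}{2}=n(n-1)$. Writing the second step as the factorization $D\,(f_{i+j})\,D$ with $D=\mathrm{diag}(1,v,\dots,v^{n-1})$ is the same argument in matrix form.
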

\begin{proof}
The first identity factors \(u\) from every row.  For the second, factor \(v^i\) from row \(i\) and \(v^j\) from column \(j\).
\end{proof}

In the study of orthogonal polynomials and algebraic combinatorics, the bivariate Hankel kernel provides a powerful generating function approach to evaluating and manipulating Hankel determinants. 
Let $A(x) = \sum_{n=0}^{\infty} a_n x^n \in \mathbb{C}[[x]]$ be a formal power series.

The \emph{bivariate Hankel kernel} associated with the formal power series $A(x)$ is defined as the formal bivariate rational series
\begin{align*}
\mathcal{K}_A(x,y) = \frac{xA(x) - yA(y)}{x - y}.
\end{align*}
Utilizing the standard algebraic identity for the difference of powers, $x^{n+1} - y^{n+1} = (x-y) \sum_{i+j=n} x^i y^j$, one obtains the formal expansion:
\begin{align*}
\mathcal{K}_A(x,y) = \frac{1}{x-y} \sum_{n=0}^{\infty} a_n (x^{n+1} - y^{n+1}) = \sum_{n=0}^{\infty} a_n \sum_{i+j=n} x^i y^j = \sum_{i,j \ge 0} a_{i+j} x^i y^j.
\end{align*}
Consequently, the coefficient of $x^i y^j$ in the series $\mathcal{K}_A(x,y)$ is precisely $a_{i+j}$. Thus, the infinite coefficient matrix of $\mathcal{K}_A(x,y)$ corresponds exactly to the Hankel matrix $(a_{i+j})_{i,j \ge 0}$ associated with the sequence $(a_n)_{n \ge 0}$.
A nice application related to the Hankel kernel can be found in \cite{Gessel-Xin}.

A fundamental application of the bivariate Hankel kernel arises in the evaluation of Hankel determinants via generating function transformations, specifically through the application of unitriangular congruences.

\begin{proposition} \label{prop:unit_congruence}
Let $U(x) \in 1 + x\mathbb{C}[[x]]$ be a unit formal power series. The transformation
\begin{align*}
    \widetilde{\mathcal{K}}(x,y) = U(x) \mathcal{K}_A(x,y) U(y)
\end{align*}
corresponds to a unitriangular congruence of the associated Hankel matrices. In particular, this operation preserves all leading principal minors, thereby leaving the Hankel determinants invariant.
\end{proposition}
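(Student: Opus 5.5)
The plan is to read the operation $\widetilde{\mathcal{K}}(x,y)=U(x)\mathcal{K}_A(x,y)U(y)$ as a matrix identity on coefficient arrays. Write $U(x)=\sum_{k\ge0}u_kx^k$ with $u_0=1$. Let $L=(u_{i-k})_{i,k\ge0}$ be the infinite lower triangular Toeplitz matrix, with $u_m=0$ for $m<0$. Let $H=(a_{i+j})_{i,j\ge0}$ be the Hankel matrix. We have already seen that $H$ is exactly the coefficient matrix of $\mathcal{K}_A(x,y)$, in the sense that $\mathcal{K}_A(x,y)=\sum_{i,j}H_{ij}x^iy^j=\mathbf{x}^{T}H\mathbf{y}$ with $\mathbf{x}=(1,x,x^2,\dots)^T$ and $\mathbf{y}=(1,y,y^2,\dots)^T$.

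Multiplying by $U(x)$ gives
\[U(x)\sum_k c_kx^k=\sum_i\Bigl(\sum_{k\le i}u_{i-k}c_k\Bigr)x^i.\]
On the $x$-index this is left multiplication by $L$, and likewise multiplication by $U(y)$ is right multiplication by $L^T$ on the $y$-index. So the coefficient matrix of $\widetilde{\mathcal{K}}$ is $\widetilde H=LHL^T$. Each entry is a finite sum, because $L$ is lower triangular, so the product is well defined formally.

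The key step is truncation. Let $L_n,H_n,\widetilde H_n$ be the leading $n\times n$ blocks. Since $L$ is lower triangular, $(LHL^T)_{ij}$ with $i,j<n$ involves only $L_{ik}$ with $k\le i<n$ and $L_{jl}$ with $l\le j<n$. Hence $\widetilde H_n=L_nH_nL_n^T$. The diagonal of $L_n$ is all $u_0=1$, so $\det L_n=1$. By multiplicativity of determinants, $\det\widetilde H_n=\det H_n$ for every $n$, which is the asserted invariance of the leading principal minors.

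The only point needing care is this truncation step, namely that triangularity makes the infinite product compatible with taking leading blocks. It is routine once the indices are written out. One caveat is worth noting: $\widetilde{\mathcal{K}}$ need not itself be a Hankel kernel. The statement only claims that the matrix obtained is congruent to the original one and has the same leading principal minors, which is what the argument gives.
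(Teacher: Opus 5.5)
Your proof is correct and takes essentially the same route as the paper. Both identify the coefficient matrix of $\widetilde{\mathcal K}$ as $LHL^T$, where $L$ is the unit lower-triangular Toeplitz matrix of $U$, and conclude that leading principal minors are preserved. Your explicit truncation identity $\widetilde H_n=L_nH_nL_n^T$, which follows from triangularity, is exactly the detail the paper compresses into its appeal to Cauchy--Binet.
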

\begin{proof}[Sketch of the proof]
Let $H_A$ denote the coefficient matrix of $\mathcal{K}_A(x,y)$. The multiplication of the kernel by $U(x)U(y)$ translates, at the matrix level, to the congruence transformation $\widetilde{H} = L H_A L^T$, where $L$ is the lower triangular Toeplitz matrix generated by the coefficients of $U(x)$. Because the constant term of $U(x)$ is $1$, the matrix $L$ is unit lower triangular (its main diagonal consists entirely of ones). By the Cauchy--Binet formula, such a unitriangular congruence preserves all leading principal minors of $H_A$.
\end{proof}

\begin{lemma}\label{lem:jacobi-step}
If the power series $F(x)$ and $G(x)$ satisfy
\[F(x)=\frac{1}{1-ux-vx^2G(x)},
\]
then
\[H_n(F)=v^{\,n-1}H_{n-1}(G)\qquad(n\geq1).
\]
\end{lemma}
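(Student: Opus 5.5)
We prove \cref{lem:jacobi-step}, the classical one-step J-fraction reduction for Hankel determinants. The plan is to use the bivariate Hankel kernel and \cref{prop:unit_congruence}: we transform the kernel of $F$ by a unitriangular congruence into a block form whose leading $1\times1$ block is $1$. The remaining block should be $v$ times the Hankel matrix of $G$.

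First we set up the identity to be used. Write $F=1/(1-ux-vx^2G)$, so that
\[F=1+uxF+vx^2GF.\]
The key identity is the classical fact that the Hankel matrix of $F$ factors as $L\,\mathrm{diag}(1,vH_G)\,L^T$ with $L$ unit lower triangular, where $H_G$ is the Hankel matrix of $G$. We establish it through the kernel $\mathcal K_F(x,y)=(xF(x)-yF(y))/(x-y)$, choosing a unit series $U(x)$ so that $U(x)U(y)\mathcal K_F(x,y)$ splits as
\[1+ v\,xy\,\mathcal K_G(x,y)+(\text{terms carried into an unitriangular change of basis}).\]

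The natural choice is $U=1/F=1-ux-vx^2G$. For this choice we compute
\[\frac{\mathcal K_F(x,y)}{F(x)F(y)}=\frac{x/F(y)-y/F(x)}{x-y}.\]
Substituting $1/F(t)=1-ut-vt^2G(t)$, the numerator becomes
\[x-y-vxy\bigl(yG(y)-xG(x)\bigr).\]
Dividing by $x-y$ gives
\[\frac{\mathcal K_F(x,y)}{F(x)F(y)}=1+vxy\,\mathcal K_G(x,y).\]
Since $U(0)=1$, \cref{prop:unit_congruence} shows that the coefficient matrix of this kernel has the same leading principal minors as the Hankel matrix of $F$.

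The coefficient matrix of $1+vxy\mathcal K_G$ is block diagonal $\mathrm{diag}(1,\,v(g_{i+j})_{i,j\ge0})$. Its $n\times n$ leading minor is therefore $v^{n-1}H_{n-1}(G)$. This gives the claim for $n\ge1$.

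The only delicate point is confirming that multiplication by $U(x)U(y)$ acts as $LHL^T$ with $L$ the lower triangular Toeplitz matrix of $U$, so that leading minors are preserved. This is exactly the content of \cref{prop:unit_congruence}, so there is no real obstacle; the computation above is short.
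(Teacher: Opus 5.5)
Your proposal is correct and is essentially the paper's own proof. Both conjugate the Hankel kernel by the unitriangular congruence attached to $1/F$, obtaining $\mathcal K_F(x,y)/(F(x)F(y))=1+vxy\,\mathcal K_G(x,y)$, and then read off the block form $1\oplus vH_G$. The only difference is that the paper absorbs $v$ by setting $Q=vG$.
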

\begin{proof}
Let \(Q(x)=vG(x)\). The bivariate generating kernel of the Hankel matrix of a series \(A(x)\) is
\[\mathcal K_A(x,y)=\frac{xA(x)-yA(y)}{x-y}=\sum_{i,j\geq0}a_{i+j}x^iy^j.
\]
Left and right multiplication by the unit lower-triangular Toeplitz matrix associated with \(1/F(x)\) changes this kernel to
\begin{align*}
 \frac{\mathcal K_F(x,y)}{F(x)F(y)}=\frac{x/F(y)-y/F(x)}{x-y}=1+xy\frac{xQ(x)-yQ(y)}{x-y}.
\end{align*}
Thus every finite principal block is transformed by determinant-one row and column operations into \(1\oplus H_{n-1}(Q)\).  Hence
\(H_n(F)=H_{n-1}(Q)=v^{n-1}H_{n-1}(G)\).
\end{proof}

The following lemma is a classical result concerning Hankel determinants of J-fractions; we refer the reader to \cite{Krattenthaler1999,Krattenthaler05,Viennot,Wall2000} for details.

\begin{lemma}\label{lem:jacobi-product}
If a normalized series has Jacobi continued fraction
\begin{align}\label{Equation-FX-230-UV}
F(x)=\sum_{k\geq 0}\mu_k x^k=\cfrac{\mu_0}{1-u_0x-
       \cfrac{v_1x^2}{1-u_1x-
       \cfrac{v_2x^2}{1-u_2x-\ddots}}},
\end{align}
then we have 
\begin{align*}
 H_n(F)=\mu_0^n\prod_{j=1}^{n-1}v_j^{\,n-j}\qquad(n\geq1).
\end{align*}
\end{lemma}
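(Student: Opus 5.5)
The plan is to prove the formula by peeling off one level of the continued fraction at a time with \cref{lem:jacobi-step}, after pulling out the factor $\mu_0$ with \cref{lem:scalings}. For $k\geq0$ let $F_k(x)$ denote the tail of the J-fraction starting at level $k$:
\[
F_k(x)=\cfrac{1}{1-u_kx-\cfrac{v_{k+1}x^2}{1-u_{k+1}x-\ddots}}.
\]
Then $F=\mu_0F_0$, and formally
\begin{equation*}
F_k(x)=\frac{1}{1-u_kx-v_{k+1}x^2F_{k+1}(x)}\qquad(k\geq0).
\end{equation*}

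First I justify that these tails are genuine formal power series with constant term $1$. In the $x$-adic topology, truncating the fraction at depth $N$ changes the expansion only in degrees $\geq 2N$, since each level contributes a factor $x^2$. Hence the truncations converge to a well-defined series $F_k$. The relation displayed above then holds exactly in $\K[[x]]$, because it holds for all truncations. This applies whatever the values of the $v_j$, including zero, so no nonvanishing hypotheses are needed.

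Next, \cref{lem:scalings} with $u=\mu_0$ gives $H_n(F)=\mu_0^nH_n(F_0)$. It therefore suffices to prove, for every $k\geq0$ and $n\geq1$,
\[
H_n(F_k)=\prod_{j=1}^{n-1}v_{k+j}^{\,n-j}.
\]
The case $k=0$ is the claim.

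I argue by induction on $n$, uniformly in $k$.
\begin{itemize}
\item For $n=1$, $H_1(F_k)$ is the constant term of $F_k$, which is $1$, and the product is empty.
\item For $n\geq2$, apply \cref{lem:jacobi-step} with $F=F_k$, $u=u_k$, $v=v_{k+1}$ and $G=F_{k+1}$. This gives $H_n(F_k)=v_{k+1}^{\,n-1}H_{n-1}(F_{k+1})$.
\item The induction hypothesis gives $H_{n-1}(F_{k+1})=\prod_{j=1}^{n-2}v_{k+1+j}^{\,n-1-j}$.
\item Reindexing with $j\mapsto j-1$ turns this into $\prod_{j=2}^{n-1}v_{k+j}^{\,n-j}$. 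Multiplying by $v_{k+1}^{\,n-1}$, which is the $j=1$ factor, gives the required product.
\end{itemize}

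The only delicate point is the first step: the infinite continued fraction has to be meaningful, and the tail recursion has to hold as an identity of power series rather than only formally. The $x$-adic convergence argument settles this. After that, the rest is bookkeeping on the exponents.
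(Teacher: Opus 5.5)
Your proof is correct. The paper gives no proof of this lemma; it calls it classical and refers to Viennot, Wall and Krattenthaler. The standard arguments there go one of two ways:
\begin{itemize}
\item through orthogonal polynomials, where the Hankel matrix $(\mu_{i+j})$ factors as $LDL^{T}$ with $L$ unitriangular and $D=\operatorname{diag}(\mu_0,\mu_0v_1,\mu_0v_1v_2,\dots)$, the diagonal entries being the squared norms $L(P_k^2)$, as computed for constant coefficients in the proof of \cref{lem:constant-tail-border};
\item through Viennot's weighted Motzkin paths and the Lindstr\"om--Gessel--Viennot lemma.
\end{itemize}

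You instead factor out $\mu_0$ with \cref{lem:scalings} and iterate the one-level kernel identity of \cref{lem:jacobi-step}. Before doing so you check that the tails $F_k$ are genuine series with constant term $1$ satisfying $F_k=1/(1-u_kx-v_{k+1}x^2F_{k+1})$, and the induction and exponent reindexing are right.

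Your route is shorter, stays inside the paper's own toolkit, and works over any commutative ring with no hypothesis $v_j\neq0$. The orthogonal-polynomial route is normally phrased for quasi-definite functionals (the form of Favard's theorem recalled before \cref{Lemma-Orthogonal-L} requires $b_n\neq0$), so degenerate cases would need a separate specialization argument. Your version therefore fits the paper's specialization philosophy. It is also exactly the iteration the paper tacitly performs when it writes $H_n(G)=v^{\binom n2}$ by \cref{lem:jacobi-step}.

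The classical routes give more structure in return: the orthogonal polynomials, the explicit triangular factorization of every Hankel block, a combinatorial interpretation, and the converse that a series with all $H_n\neq0$ admits a J-fraction. None of this is needed for the lemma as stated.
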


Sulanke and Xin \cite[Proposition~4.1]{SulankeXin2008} define a quadratic transformation $\tau$ on a certain generating function $F(x)$ of algebraic degree $2$. Then they established a direct and simple connection between $H_n(F)$ and $H_n(\tau(F))$, namely, $H_n(F)=a^nH_{n-d-1}(\tau(F))$, where $a$ is a constant and $d\geq 0$.
Applying the transformation $\tau$, Xin obtained the following result.

\begin{lemma}{\em (\cite[Lemma 1]{Xin2009})}\label{lem:sx}
Suppose \(a\ne0\) and
\[F(x)=\frac{a+bx}{1+cx+dx^2+x^2(e+fx)F(x)}.
\]
Set
\begin{align*}
 A&=-\frac{a^3e+a^2d-abc+b^2}{a^2},\\
 B&=-\frac{a^4f+ca^3d-a^2c^2b+2acb^2-a^2bd-b^3}{a^3},\\
 D&=-\frac{a^2d-2abc+2b^2}{a^2},
\end{align*}
and define
\[G(x)=\frac{A+Bx}{1+cx+Dx^2+x^2(-1-(b/a)x)G(x)}.
\]
Then we have
\[H_n(F)=a^nH_{n-1}(G)\qquad(n\geq1).
\]
\end{lemma}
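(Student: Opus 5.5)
We prove the lemma with a single Jacobi step (\cref{lem:jacobi-step}), showing that the resulting tail series is exactly $G$. By \cref{lem:scalings}, $H_n(F)=a^nH_n(F/a)$, and $F/a$ has constant term $1$ because $a\neq 0$. So it suffices to write
\[
\frac{F(x)}{a}=\frac{1}{1-ux-x^2W(x)}
\]
for a suitable constant $u$ and power series $W$. \cref{lem:jacobi-step} with $v=1$ then gives $H_n(F/a)=H_{n-1}(W)$, hence $H_n(F)=a^nH_{n-1}(W)$. The whole proof therefore reduces to showing $W=G$.

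\textbf{Step 1: the defining equation of $W$.} Define $W$ by $W=(1-ux-a/F)/x^2$. From the defining equation of $F$,
\[
\frac{a}{F}=\frac{a\bigl(1+cx+dx^2+x^2(e+fx)F\bigr)}{a+bx}=1+\Bigl(c-\frac ba\Bigr)x+O(x^2).
\]
Choosing $u=b/a-c$ kills the linear term, so $W$ is a genuine power series.

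Next substitute $F=a/(1-ux-x^2W)$ back into the quadratic relation
\[
F\bigl(1+cx+dx^2\bigr)+x^2(e+fx)F^2=a+bx.
\]
Multiply through by $(1-ux-x^2W)^2/a$. This gives a polynomial identity in $x$ and $W$ that is quadratic in $W$. The $W^2$ term comes only from $(a+bx)x^4W^2/a$ on the right. After dividing by a common factor $x^2$ and moving terms across, its coefficient should be $-x^2(1+(b/a)x)$, which matches the $x^2(-1-(b/a)x)G^2$ term in the equation for $G$. That the factor $1+(b/a)x$ appears is the structural reason the lemma holds.

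\textbf{Step 2: collect the remaining coefficients.} The linear-in-$W$ and constant parts should take the form
\[
(1+cx+Dx^2)W-(A+Bx)=0.
\]
To confirm this, expand $(1-ux)^2$, $(1-ux)$ and $(a+bx)$ with $u=b/a-c$, and match the coefficients of $x^0,x^1,x^2$ against $A$, $B$, $D$ as given. The constant term is the quickest check, since $W(0)=A$ is forced by the $x^2$ coefficient of $a/F$. Finally, $W$ and $G$ both satisfy the same quadratic functional equation. That equation has a unique power-series solution, because the $W^2$ term carries a factor $x^2$ and so the coefficients can be determined recursively. Hence $W=G$, and $H_n(F)=a^nH_{n-1}(G)$ for $n\ge1$.

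\textbf{Main obstacle.} The difficulty is the bookkeeping in Step 2, in particular verifying that the $x^1$ coefficient of the linear part is exactly $c$. There may also be leftover $x^3$ and $x^4$ terms that do not involve $W$, which must cancel identically using the relation. If a direct match fails, the fallback is the Hankel-kernel route. One applies \cref{prop:unit_congruence} with $U(x)=a/F(x)$, computes the transformed kernel explicitly, and reads off $G$ from its $xy$-tail. This is the same computation organized differently, and it follows Sulanke--Xin's transformation $\tau$.
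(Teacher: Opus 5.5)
Your proposal is correct. The coefficient check you deferred, and flagged as the main obstacle, goes through exactly, so the fallback via the Hankel kernel is not needed.

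Put $\theta=b/a$, $u=\theta-c$ and $R=a/F=1-ux-x^2W$. Substituting $F=a/R$ into $F(1+cx+dx^2)+x^2(e+fx)F^2=a+bx$ and multiplying by $R^2/a$ gives
\[
(1+\theta x)R^2-(1+cx+dx^2)R-ax^2(e+fx)=0 .
\]
\begin{itemize}
\item The $W$-free part is $(1-ux)\bigl[(1+\theta x)(1-ux)-(1+cx+dx^2)\bigr]-ax^2(e+fx)$. Since $\theta-u-c=0$, the bracket reduces to $-(d+\theta u)x^2$. So this part equals $x^2(A+Bx)$, and there are no leftover $x^3$ or $x^4$ terms.
\item The coefficient of $x^2W$ is $-1+(c-2\theta+2u)x+(d+2\theta u)x^2$. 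Because $c-2\theta+2u=-c$, this equals $-(1+cx+Dx^2)$, so the $x^1$ coefficient is indeed exactly $c$.
\item The $W^2$ term is $(1+\theta x)x^4W^2$, as you said.
\end{itemize}
Dividing by $x^2$ yields
\[
(1+cx+Dx^2)W-x^2(1+\theta x)W^2=A+Bx,
\]
with $A=-(d+ae+\theta u)$, $B=u(d+\theta u)-af$ and $D=-(d+2\theta u)$. Substituting $u=b/a-c$ reproduces exactly the stated formulas for $A$, $B$, $D$. Your uniqueness argument then gives $W=G$, and $H_n(F)=a^nH_n(F/a)=a^nH_{n-1}(G)$.

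The paper gives no proof of this lemma; it quotes it from Xin, who obtains it as the $d=0$ case of the Sulanke--Xin transformation $\tau$. In that case $\tau$ amounts to a single Jacobi step, so your route is the same mechanism made explicit. What your version buys is self-containment: the lemma follows from \cref{lem:scalings} and \cref{lem:jacobi-step}, both already proved in the paper. The general $\tau$ is only needed when the numerator carries a factor $x^d$ with $d\ge1$, where a single Jacobi step no longer suffices.
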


\subsection{Extension of the Wang--Zhang sufficient condition}

Using \cref{lem:sx}, Wang and Zhang proved the following result, which also provides a sufficient condition for \cref{Openproblem}.

\begin{theorem}{\em (\cite[Theorem~1]{WangZhang2024})}\label{thm:wz}
Suppose \(a_0\ne0\) and
\begin{equation}\label{eq:wz-form}
 Q(x)=\frac{a_0+b_0x}{1+c_0x+d_0x^2+x^2(-1+f_0x)Q(x)}.
\end{equation}
Set
\[f_1=-\frac{b_0}{a_0},\qquad
 a_1=a_0-d_0+\frac{b_0}{a_0}\left(c_0-\frac{b_0}{a_0}\right).
\]
Then \((H_n(Q))_{n\geq0}\) is a $(\alpha,\beta)$ Somos-4 sequence with
\begin{align}
 \alpha&=a_0^2(c_0+f_0+f_1)^2,\label{eq:wz-alpha}\\
 \beta&=-(c_0+f_0+f_1)^2a_0^3-a_1\bigl((f_0-f_1)(c_0+f_0+f_1)-a_1\bigr)a_0^2.
 \label{eq:wz-beta}
\end{align}
\end{theorem}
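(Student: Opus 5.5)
The plan is to iterate \cref{lem:sx}. The key observation is that the Wang--Zhang form \eqref{eq:wz-form} is \emph{stable} under the Sulanke--Xin transformation. We then reduce the Somos-4 relation to a single base case together with an invariance statement.

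\textbf{Step 1 (stability).} Apply \cref{lem:sx} with $(a,b,c,d,e,f)=(a_0,b_0,c_0,d_0,-1,f_0)$. With $e=-1$ the formula for $A$ becomes
\[A=-\frac{-a_0^3+a_0^2d_0-a_0b_0c_0+b_0^2}{a_0^2}=a_0-d_0+\frac{b_0}{a_0}\Bigl(c_0-\frac{b_0}{a_0}\Bigr)=a_1 .\]
The new series $G$ satisfies
\[G=\frac{A+Bx}{1+c_0x+Dx^2+x^2(-1+f_1x)G},\qquad f_1=-\frac{b_0}{a_0}.\]
So $G=Q_1$ is again of the form \eqref{eq:wz-form}, with parameters $(a_1,b_1,c_0,d_1,f_1)=(A,B,c_0,D,-b_0/a_0)$. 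Note that $c$ is unchanged and the coefficient of $x^2G$ is again $-1$. \cref{lem:sx} gives $H_n(Q_0)=a_0^nH_{n-1}(Q_1)$. Iterating produces a sequence of parameter tuples $T_k=(a_k,b_k,c_0,d_k,f_k)$ with $H_n(Q_k)=a_k^nH_{n-1}(Q_{k+1})$, as long as $a_k\neq0$.

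\textbf{Step 2 (reduction to invariance plus a base case).} Denote by $\alpha(T)$ and $\beta(T)$ the expressions \eqref{eq:wz-alpha}--\eqref{eq:wz-beta}, viewed as functions of a tuple $T$. Here $f_1$ and $a_1$ are the next-step quantities computed from $T$.

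\emph{Invariance.} I would prove $\alpha(T_1)=\alpha(T_0)$ and $\beta(T_1)=\beta(T_0)$. After clearing denominators (powers of $a_0$), this is a rational identity in $a_0,b_0,c_0,d_0,f_0$, which I would check symbolically. For $\alpha$ this means showing $a_1(c_0+f_1+f_2)=\pm a_0(c_0+f_0+f_1)$, where $f_2=-b_1/a_1$.

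\emph{Inductive step.} Suppose the relation \eqref{eq:wz-universal-somos} holds for $Q_1$ at index $n-1$ with the same $(\alpha,\beta)$. Substitute $H_m(Q_0)=a_0^mH_{m-1}(Q_1)$. Each of $H_nH_{n-4}$, $H_{n-1}H_{n-3}$ and $H_{n-2}^2$ acquires the common factor $a_0^{2n-4}$. Hence the relation for $Q_0$ at index $n$ follows, and induction on $n$ reduces everything to the case $n=4$ for an arbitrary tuple.

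\emph{Base case.} The identity $H_4H_0=\alpha H_3H_1+\beta H_2^2$ is a polynomial identity in the first seven coefficients of $Q$. These coefficients are polynomials in $a_0,b_0,c_0,d_0,f_0$, obtained by expanding \eqref{eq:wz-cross-multiplied} recursively. The check can be done directly by symbolic computation. Alternatively, I would use $H_n=\prod_{k<n}a_k^{\,n-k}$, which reduces the base case to an identity among $a_0,a_1,a_2,a_3$.

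\textbf{Nonvanishing and the main obstacle.} The iteration needs $a_k\neq0$ for all $k$, while the theorem only assumes $a_0\neq0$. I would handle this by genericity. Treat $a_0,b_0,c_0,d_0,f_0$ as indeterminates over $\mathbb{Q}$. Then every $a_k$ is a nonzero rational function, so the argument runs in that field. After multiplying by $a_0^3$, both sides of the Somos relation are polynomials in the parameters, so the identity specializes to any values with $a_0\neq0$. This same observation underlies \cref{prop:wz-universal}.

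I expect the invariance of $\beta$ under $T_0\mapsto T_1$ to be the real work. It is a heavy rational identity involving $B$ and $D$. I would first try to organize it through the conserved quantity $\Gamma$ (suitably normalized) and $\Delta=a_0(c_0+f_0+f_1)$, rather than expanding blindly.
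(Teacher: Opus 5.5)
Your plan is correct and shares the paper's skeleton: stability of the Wang--Zhang form under \cref{lem:sx} with $c$ fixed and $e=-1$, generic parameters in a rational function field, and specialization at the end. It packages the core differently, though. You reduce to two facts, invariance of $(\alpha(T),\beta(T))$ under $T_0\mapsto T_1$ and the single identity at $n=4$, and then induct on $n$ through $H_m(Q_0)=a_0^mH_{m-1}(Q_1)$ for $n\ge5$. The paper instead uses the product formula $H_m=\prod_j A_j^{m-j}$ and derives by hand the conservation laws that your invariance statement encodes. Substituting $B_j=-A_jF_{j+1}$ into the recursions gives $D_j=A_j-A_{j+1}-CF_{j+1}-F_{j+1}^2$. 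From this, the $B$-recursion shows $L=A_j(F_j+F_{j+1}+C)$ is independent of $j$; this is your $\alpha$-invariance, with the plus sign. The $D$-recursion gives $A_{j+1}(A_j-A_{j+2})=L(F_{j+2}-F_{j+1})$, which makes $\Theta=2A_jA_{j+1}+L(C+2F_{j+1})$ constant and hence $\mathcal I_j=\beta(T_j)$ constant. Your base case, divided by $H_2^2=a_0^4a_1^2$, is exactly $a_1a_2^2a_3=\alpha a_2+\beta$, i.e.\ \eqref{eq:wz-A-somos} at $j=2$. So ``invariance plus base case'' has the same logical content as the paper's identity $A_{j+1}A_j^2A_{j-1}=\alpha A_j+\beta$ for all $j$. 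Your packaging makes transparent why the relation propagates along the Sulanke--Xin orbit. The paper's route turns the step you call the real work into a few lines of hand algebra, whereas your proposal leaves the $\beta$-invariance and the base case to an unperformed symbolic check. Both are true finite identities, so a computer-algebra verification would close the argument, but as written they are not yet proved.

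Two smaller points. First, $\Gamma$ is not conserved, since $\Gamma(T)=a_0^2a_1=H_2(Q)$. For example, $(a_0,b_0,c_0,d_0,f_0)=(1,0,0,-1,0)$ gives $T_1=(2,0,0,1,0)$, and $\Gamma$ goes from $2$ to $4$ while $\Delta=0$ and $\beta=4$ are preserved. The quantities to organize the computation around are $\Delta=L$ and $\Theta$, or equivalently $\beta=(a_0-d_0-c_0f_0-f_0^2)\Gamma-a_0\Delta^2$ itself. Second, the claim that every $a_k$ is a nonzero rational function needs an argument. The paper notes that at $(1,0,0,0,0)$ the parameter map is stationary, so each $a_k$ is regular there with value $1$. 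Alternatively, if $a_0,\dots,a_{k-1}\ne0$, then $H_{k+1}(Q)=\prod_{j\le k}a_j^{k+1-j}$ specializes to $H_{k+1}(\mathcal C(x^2))=1$, which forces $a_k\ne0$.
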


The sufficient condition in \cref{thm:wz}  is now referred to as the \emph{Wang--Zhang condition}.
According to the Wang--Zhang condition, Barry's four $(\alpha,\beta)$ Somos-4 conjectures \cite{Barry2022Conjectures} were proved directly in \cite[Corollaries~4--7]{WangZhang2024}.

\begin{remark}\label{rem:specialization}
We need to call the reader's attention to the following fact.
In \cref{lem:sx,thm:wz}, all parameters except $a$ and $a_0$ are allowed to be zero. 
In other words, $a$ and $a_0$ are not allowed to be zero.
In fact, the proof of \cref{thm:wz} requires all determinants $(H_n(Q))_{n\geq0}$ to be nonzero.
This is due to the fact that every subsequent application of \cref{lem:sx} requires the corresponding leading coefficient to be
nonzero.

We find that when the proof of \cref{thm:wz} is carried over to a field of fractions (the rational function field), these nonzero conditions are unnecessary.
Therefore, \cref{prop:wz-universal} below extends \cref{thm:wz}.
For convenience, we provide a complete proof.
In the subsequent sections, we shall frequently use \cref{lem:sx,thm:wz} to prove many conjectures on the $(\alpha,\beta)$ Somos-4 sequence. 
Concerning the vanishing of the nonzero condition, see the proof of \cref{prop:wz-universal} for details, so we do not repeat it.
\end{remark}

\begin{proof}[Proof of \cref{prop:wz-universal}]
We prove this theorem in the following five steps.

\smallskip\noindent
\emph{Step 1: The universal series $Q_0(x)$.}
We regard five symbols \(A,B,C,D,F\) as algebraically independent.
Let $\mathcal R$ be an integral domain and let $\mathcal K$ be its field of fractions:
$\mathcal R=\mathbb Z[A,B,C,D,F]$ and $\mathcal K=\operatorname{Frac}(\mathcal R)$.
Let \(Q_0(x)=\sum_{m\geq0}q_mx^m\) satisfy
\begin{align*}
 (1+Cx+Dx^2)Q_0(x)+x^2(-1+Fx)Q_0(x)^2=A+Bx.
\end{align*}
Set \(q_j=0\) for \(j<0\), and interpret an empty sum as zero.
By equating coefficients of \(x^m\), we have
\begin{align}\label{eq:wz-coefficient-recursion}
q_m=A\delta_{m,0}+B\delta_{m,1}-Cq_{m-1}-Dq_{m-2}+\sum_{\substack{r,s\geq0\\r+s=m-2}}q_rq_s
-F\sum_{\substack{r,s\geq0\\r+s=m-3}}q_rq_s,
\qquad m\geq0,
\end{align}
where \(\delta_{i,j}\) is the Kronecker delta.  Every term on the right-hand side of \eqref{eq:wz-coefficient-recursion}, apart from its constant terms, involves only coefficients preceding \(q_m\).
Consequently the recursion proves existence and uniqueness and, by induction, $q_m\in\mathcal R$, $H_m(Q_0)\in\mathcal{R}$ for $m\geq0$.
In particular, this construction does not require \(A\ne0\).
The first three coefficients are $q_0=A$, $q_1=B-AC$, and $q_2=A^2-AD-BC+AC^2$.
It follows that $H_2(Q_0)=q_0q_2-q_1^2=A^3-A^2D+ABC-B^2$.

\smallskip\noindent
\emph{Step 2: Apply \cref{lem:sx}.}
We now work over \(\mathcal K\).  Starting from $A_0=A$, $B_0=B$, $D_0=D$, $F_0=F$, consider the recursions
\begin{equation}\label{eq:wz-xin-orbit}
\begin{aligned}
 A_{j+1}&=-\,\frac{-A_j^3+A_j^2D_j-A_jB_jC+B_j^2}{A_j^2},\\
 B_{j+1}&=-\,\frac{A_j^4F_j+CA_j^3D_j-C^2A_j^2B_j +2CA_jB_j^2-A_j^2B_jD_j-B_j^3}{A_j^3},\\
 D_{j+1}&=-\,\frac{A_j^2D_j-2A_jB_jC+2B_j^2}{A_j^2},\\
 F_{j+1}&=-\,\frac{B_j}{A_j}.
\end{aligned}
\end{equation}
These are precisely the parameter recursions obtained from \cref{lem:sx} after setting \(e_j=-1\) and keeping \(c_j=C\). 
Whenever the quantities in \eqref{eq:wz-xin-orbit} are defined, let \(Q_j(x)\) be the unique series satisfying
\begin{align*}
 Q_j(x)=\frac{A_j+B_jx}{1+Cx+D_jx^2+x^2(-1+F_jx)Q_j(x)}.
\end{align*}
We must prove, rather than assume, that every \(A_j\) is a nonzero element of \(\mathcal K\). 
Let the prime ideal
\[\mathfrak m=(A-1,B,C,D,F)\subset\mathcal R
\]
and work momentarily in the local ring \(\mathcal R_{\mathfrak m}\).
We claim inductively that the four rational functions
\(A_j,B_j,D_j,F_j\) belong to \(\mathcal R_{\mathfrak m}\) and have
values
\begin{equation}\label{eq:wz-stationary-values}
 A_j(1,0,0,0,0)=1,\qquad
 B_j(1,0,0,0,0)=D_j(1,0,0,0,0)=F_j(1,0,0,0,0)=0.
\end{equation}
This is true for \(j=0\).  If it holds for \(j\), then the image of \(A_j\) in the residue field of \(\mathcal R_{\mathfrak m}\) is \(1\); hence \(A_j\) is a unit of \(\mathcal R_{\mathfrak m}\).
Thus every expression on the right-hand side of \eqref{eq:wz-xin-orbit} belongs to \(\mathcal R_{\mathfrak m}\).
Substituting the four values in \eqref{eq:wz-stationary-values}, together with \(C=0\), into those expressions gives respectively \(1,0,0,0\).  This proves the induction.
In particular, \(A_j\) takes the value \(1\) at this specialization and therefore cannot be the zero rational function in \(\mathcal K\).
All divisions in \eqref{eq:wz-xin-orbit} are consequently legitimate over \(\mathcal K\).

We may now apply \cref{lem:sx} at every step: $H_m(Q_j)=A_j^mH_{m-1}(Q_{j+1})$ for $m\geq1$.
Iterating this identity gives
\begin{equation}\label{eq:wz-Hankel-product}
 \mathcal H_m:=H_m(Q_0)=\prod_{j=0}^{m-1}A_j^{\,m-j}=A_0^mA_1^{m-1}\cdots A_{m-2}^2A_{m-1}.
\end{equation}
Hence every \(\mathcal H_m\) is nonzero in \(\mathcal K\). 
For each fixed \(m\), $\mathcal H_m$ uses only finitely many transformations.

\smallskip\noindent
\emph{Step 3: The recurrence associated with $A_j$.}
The last equation of \eqref{eq:wz-xin-orbit} gives $B_j=-A_jF_{j+1}$.
By substituting this relation in the first equation of \eqref{eq:wz-xin-orbit}, we obtain
\begin{equation}\label{eq:wz-D-expression}
 D_j=A_j-A_{j+1}-CF_{j+1}-F_{j+1}^2.
\end{equation}
Because \(B_{j+1}=-A_{j+1}F_{j+2}\), substitution of \(B_j=-A_jF_{j+1}\) into the second equation of
\eqref{eq:wz-xin-orbit} gives, term by term,
\begin{align*}
A_{j+1}F_{j+2}={}&A_jF_j+CD_j+C^2F_{j+1}+2CF_{j+1}^2+D_jF_{j+1}+F_{j+1}^3 \\
={}&A_jF_j+(C+F_{j+1})\bigl(D_j+CF_{j+1}+F_{j+1}^2\bigr).
\end{align*}
By \eqref{eq:wz-D-expression}, the expression in parentheses is \(A_j-A_{j+1}\). Therefore, we have 
\[A_{j+1}F_{j+2}=A_jF_j+(C+F_{j+1})(A_j-A_{j+1}),
\]
and hence
\begin{equation}\label{eq:wz-first-conservation}
A_{j+1}(F_{j+2}+F_{j+1}+C)=A_j(F_{j+1}+F_j+C)=A_0(F_1+F_0+C).
\end{equation}
Write $\sigma=F_0+F_1+C$, and $L=A_0\sigma$.
Then the common value in \eqref{eq:wz-first-conservation} is \(L\).

The third equation of \eqref{eq:wz-xin-orbit}, together with $B_j=-A_jF_{j+1}$, becomes
\begin{align*}
 D_{j+1}=-D_j-2CF_{j+1}-2F_{j+1}^2.
\end{align*}
By substituting \eqref{eq:wz-D-expression} into the above equation, we get
\begin{align*}
A_j-A_{j+2}=C(F_{j+2}-F_{j+1})+F_{j+2}^2-F_{j+1}^2=(F_{j+2}-F_{j+1})(F_{j+2}+F_{j+1}+C).
\end{align*}
By multiplying the above equation by \(A_{j+1}\) and using \eqref{eq:wz-first-conservation}, we obtain
\begin{equation}\label{eq:wz-telescoping-summand}
 A_{j+1}(A_j-A_{j+2}) =L(F_{j+2}-F_{j+1}).
\end{equation}
Summing \eqref{eq:wz-telescoping-summand} from \(j=0\) to \(j=r\) and telescoping on both sides gives 
\begin{equation}\label{eq:wz-second-conservation}
 A_0A_1-A_{r+1}A_{r+2}=L(F_{r+2}-F_1).
\end{equation}
We shall also use \eqref{eq:wz-second-conservation} for \(r=-1\), when
both sides are zero.
Define
\begin{equation}\label{eq:wz-theta-alpha}
 \Theta=2A_0A_1+L(C+2F_1),\qquad\alpha=L^2=A_0^2\sigma^2.
\end{equation}
According to \eqref{eq:wz-second-conservation}, we have
\[A_{j+1}(A_j+A_{j+2})=2A_0A_1-L(F_{j+1}+F_{j+2}-2F_1).
\]
By multiplying by \(A_{j+1}\) and using $A_{j+1}(F_{j+1}+F_{j+2}+C)=L$ from \eqref{eq:wz-first-conservation}, we obtain the recurrence
\begin{equation}\label{eq:wz-centered-recurrence}
 A_{j+1}^2(A_{j+2}+A_j)-\Theta A_{j+1}+\alpha=0 \qquad(j\geq0).
\end{equation}

\smallskip\noindent
\emph{Step 4: The Hankel identity.}
For \(j\geq0\), set $\mathcal I_j=\Theta A_jA_{j+1}-A_j^2A_{j+1}^2-\alpha(A_j+A_{j+1})$.
For \(j\geq1\), we have
\begin{align*}
\mathcal I_j-\mathcal I_{j-1}=(A_{j+1}-A_{j-1})\bigl(\Theta A_j-A_j^2(A_{j+1}+A_{j-1})-\alpha\bigr)=0,
\end{align*}
where the final equality is \eqref{eq:wz-centered-recurrence} with its index lowered by one. 
Thus \(\mathcal I_j=\mathcal I_0\) for all \(j\geq0\).
Using \eqref{eq:wz-theta-alpha} and \(L=A_0\sigma\), we find
\begin{align}
\mathcal I_0&=\Theta A_0A_1-A_0^2A_1^2-\alpha(A_0+A_1)=-A_0^3\sigma^2+A_0^2A_1\bigl(A_1+(F_1-F_0)\sigma\bigr)\notag\\
&=-A_0^3\sigma^2-A_0^2A_1\bigl((F_0-F_1)\sigma-A_1\bigr).\label{eq:wz-beta-generic}
\end{align}
We now denote this common value by \(\beta\), i.e., \(\mathcal I_j=\mathcal I_0=\beta\) for all \(j\geq0\). 
According to \eqref{eq:wz-centered-recurrence}, we have
$$A_{j-1}(A_j^2(A_{j+1}+A_{j-1})-\Theta A_j+\alpha)=0.$$
Furthermore, we obtain 
\begin{align}
 A_{j+1}A_j^2A_{j-1}&=\Theta A_jA_{j-1}-A_j^2A_{j-1}^2-\alpha A_{j-1}\notag\\
 &=\mathcal I_{j-1}+\alpha A_j=\alpha A_j+\beta\qquad(j\geq1).\label{eq:wz-A-somos}
\end{align}
Taking \(j=n-2\) in \eqref{eq:wz-A-somos}, we have $A_{n-1}A_{n-2}^2A_{n-3}=\alpha A_{n-2}+\beta$.
According to \eqref{eq:wz-Hankel-product}, we obtain, for \(n\geq4\),
\begin{align*}
 \mathcal H_n\mathcal H_{n-4}=\mathcal H_{n-2}^2A_{n-1}A_{n-2}^2A_{n-3}\quad \text{and}\quad
 \mathcal H_{n-1}\mathcal H_{n-3}=\mathcal H_{n-2}^2A_{n-2}.
\end{align*}
Therefore, we obtain $\mathcal H_n\mathcal H_{n-4}=\alpha\mathcal H_{n-1}\mathcal H_{n-3}+\beta\mathcal H_{n-2}^2$ for $n\geq4$ in $\mathcal K$.

\smallskip\noindent
\emph{Step 5: Removal of all nonvanishing assumptions.}
We first rewrite the two parameters as elements of \(\mathcal R\).  From \eqref{eq:wz-xin-orbit}, we know that 
\[F_1=-\frac BA,\qquad A_1=A-D+\frac BA\left(C-\frac BA\right).
\]
Consequently, with $\Delta=A(C+F)-B$, $\Gamma=A^3-A^2D+ABC-B^2$, we have
\begin{align*}
\sigma=\frac{\Delta}{A},\qquad L=\Delta,\qquad A_1=\frac{\Gamma}{A^2}.
\end{align*}
In particular, \(\alpha=L^2=\Delta^2\).  Furthermore, we have
\begin{align*}
 A_1-(F-F_1)\sigma &=A-D+\frac{B}{A}\left(C-\frac{B}{A}\right)-\left(F+\frac{B}{A}\right)\left(C+F-\frac{B}{A}\right)
 \\ &=A-D-CF-F^2.
\end{align*}
It follows from \eqref{eq:wz-beta-generic} that
\begin{align*}
\beta=-A^3\sigma^2+A^2A_1\bigl(A_1-(F-F_1)\sigma\bigr)=-A\Delta^2+(A-D-CF-F^2)\Gamma.
\end{align*}
Thus \(\alpha,\beta\in\mathcal R\).

For fixed \(n\geq4\), define
\begin{align*}
\mathcal S_n= H_n(Q_0)H_{n-4}(Q_0)-\Delta^2H_{n-1}(Q_0)H_{n-3}(Q_0)-\bigl((A-D-CF-F^2)\Gamma-A\Delta^2\bigr)H_{n-2}(Q_0)^2.
\end{align*}
Step~1 gives \(\mathcal S_n\in\mathcal R\), while Steps~2--4 show that its image in \(\mathcal K\) is zero. 
Since \(\mathcal R\hookrightarrow\mathcal K\) is injective, \(\mathcal S_n=0\) already in \(\mathcal R\).

Finally, apply the specialization homomorphism
\[
 \mathcal R\longrightarrow\mathcal A,\qquad
 (A,B,C,D,F)\longmapsto(a_0,b_0,c_0,d_0,f_0).
\]
By \eqref{eq:wz-coefficient-recursion}, the series \(Q_0\) specializes coefficientwise to the unique solution of
\eqref{eq:wz-cross-multiplied}.  Specializing \(\mathcal S_n=0\) proves \eqref{eq:wz-universal-somos}. 
Since the final specialization involves only elements of $\mathcal{R}$, no nonvanishing condition is required after specialization. 
This completes the proof.
\end{proof}

\begin{example}
For example, take $(a_0,b_0,c_0,d_0,f_0)=(1,1,0,0,0)$.
Then \(Q=(1+x)/(1-x^2Q)\), \(a_1=H_2(Q)=0\), and
\[\bigl(H_n(Q)\bigr)_{n\geq0} =1,1,0,-1,-1,-1,-1,0,1,\ldots .
\]
Here \((\alpha,\beta)=(1,-1)\), and
\eqref{eq:wz-universal-somos} holds, but at \(n=6\) its quotient form $H_6(Q)=\frac{H_5(Q)H_3(Q)-H_4(Q)^2}{H_2(Q)}$ has zero denominator \(H_2(Q)\) and zero numerator.  It is therefore undefined rather than a recurrence determining \(H_6(Q)\).
\end{example}

More generally, all parameterized applications of \cref{lem:sx} and \cref{thm:wz} below, including normalizations and geometric rescalings, are understood over the rational function field of the parameters. 

The following lemma can be regarded as a lifting and scaling of the \((\alpha,\beta)\) Somos-4 sequence.

\begin{lemma}\label{lem:somos-invariance}
If \((S_n)_{n\geq 0}\) satisfies the \((\alpha,\beta)\) Somos-4 identity \eqref{eq:somos4-bilinear}, then so does
\[T_n=u\,v^nS_{n+k}
\]
for every \(k\in\mathbb Z_{\geq0}\) and scalars \(u,v\).
\end{lemma}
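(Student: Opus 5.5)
We verify the identity directly by substituting $T_n = u\,v^nS_{n+k}$ into \eqref{eq:somos4-bilinear}. The only point to check is that the multiplicative factors $u$ and $v^n$ come out as the same common factor on both sides, i.e.\ that the recurrence is homogeneous of degree two in $S$ and \emph{isobaric} in the index. The shift by $k$ is harmless, since the identity for $S$ holds for all indices $m\geq4$, and $n+k\geq4$ whenever $n\geq4$ and $k\geq0$.

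Concretely, fix $n\geq4$ and set $m=n+k$, so $m\geq4$. Then
\[
T_nT_{n-4}=u^2v^{2n-4}S_mS_{m-4},\qquad T_{n-1}T_{n-3}=u^2v^{2n-4}S_{m-1}S_{m-3},\qquad T_{n-2}^2=u^2v^{2n-4}S_{m-2}^2.
\]
Each of the three quadratic monomials therefore carries the same factor $u^2v^{2n-4}$: the index sums are $n+(n-4)=(n-1)+(n-3)=2(n-2)=2n-4$.

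Multiplying the $(\alpha,\beta)$ identity $S_mS_{m-4}=\alpha S_{m-1}S_{m-3}+\beta S_{m-2}^2$ by $u^2v^{2n-4}$ then gives
\[
T_nT_{n-4}=\alpha T_{n-1}T_{n-3}+\beta T_{n-2}^2 .
\]
This holds for all $n\geq4$ with the same $\alpha,\beta$, which is the claim.

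There is no real obstacle. The only things to note are that no division is performed, so $u$ or $v$ may be zero (or $v$ could even be invertible-free in a general commutative ring), and that the equality of index sums is exactly what makes the geometric factor $v^n$ cancel uniformly.
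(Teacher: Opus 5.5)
Your proposal is correct and follows the paper's proof exactly: substitute $T_n=u\,v^nS_{n+k}$, observe that all three quadratic terms carry the common factor $u^2v^{2n-4}$, and reduce to the identity for $S$ at index $n+k\geq4$. Your remarks that no division occurs (so $u$ or $v$ may vanish) and that the shift preserves the range $n\geq4$ are accurate.
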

\begin{proof}
For \(n\geq4\), direct substitution gives
\[T_nT_{n-4}-\alpha T_{n-1}T_{n-3}-\beta T_{n-2}^2
=u^2v^{2n-4}\bigl(S_{n+k}S_{n+k-4}-\alpha S_{n+k-1}S_{n+k-3}-\beta S_{n+k-2}^2\bigr)=0.
\]
This completes the proof.
\end{proof}

We prove the full two-parameter assertion in the comments section of on OEIS entry [A160702], not only its numerical specialization. 

\begin{corollary}{\em (Conjectured in \cite[A160702]{Sloane23})}\label{thm:new-a160702}
Let
\begin{equation}\label{eq:new-a160702-recurrence}
 u_0=u_1=u_2=1,\qquad u_n=ru_{n-1}+s\sum_{k=0}^{n-2}u_ku_{n-1-k}\quad(n\geq3),
\end{equation}
and let
\[ F(x)=\sum_{n\geq0}u_{n+1}x^n,\qquad h_n=H_{n+1}(F).
\]
Then the shifted Hankel transform \((h_n)_{n\geq 0}\) is an $\bigl(s^2,s^3(r+2s-2)\bigr)$ Somos-4 sequence.
\end{corollary}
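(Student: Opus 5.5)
The plan is to find a quadratic functional equation for $F(x)$ of the shape \eqref{eq:wz-cross-multiplied}, read off $\alpha,\beta$ from \cref{prop:wz-universal}, and then use \cref{lem:somos-invariance} to undo a scaling and the index shift.

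\emph{Step 1: functional equation for $U$.} Let $U(x)=\sum_{n\ge0}u_nx^n$. For $n\ge3$, the convolution in \eqref{eq:new-a160702-recurrence} is the coefficient $[x^{n-1}]U(x)^2$ minus the missing term $u_{n-1}u_0=u_{n-1}$. Hence the recurrence reads
\[
u_n=(r-s)u_{n-1}+s\,[x^{n-1}]U^2 \qquad (n\ge3).
\]
Multiplying by $x^n$, summing over $n\ge3$, and correcting the low-order terms gives
\[
U=1+x+x^2+(r-s)x(U-1-x)+sx(U^2-1-2x).
\]

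\emph{Step 2: equation for $F$ and rescaling.} Since $F=(U-1)/x$, substitute $U=1+xF$ and divide by $x$. This should give
\[
\bigl(1-(r+s)x\bigr)F-sx^2F^2=1+(1-r-s)x.
\]
Put $Q=sF$. Then $Q$ satisfies \eqref{eq:wz-cross-multiplied} with
\[
a_0=s,\quad b_0=s(1-r-s),\quad c_0=-(r+s),\quad d_0=f_0=0.
\]

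\emph{Step 3: read off the parameters.} Plugging into \cref{prop:wz-universal}, I expect
\[
\Delta=-s,\qquad \Gamma=s^3-s^2(1-r-s)=s^2(r+2s-1),
\]
and therefore
\[
\alpha=\Delta^2=s^2,\qquad \beta=s\Gamma-s\Delta^2=s^3(r+2s-2).
\]
So $(H_n(Q))_{n\ge0}$ is an $\bigl(s^2,s^3(r+2s-2)\bigr)$ Somos-4 sequence.

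\emph{Step 4: transfer to $(h_n)$.} By \cref{lem:scalings}, $H_n(Q)=s^nH_n(F)$, so $H_n(F)=s^{-n}H_n(Q)$. Applying \cref{lem:somos-invariance} with $u=1$, $v=s^{-1}$ and $k=0$, the sequence $H_n(F)$ is again an $(\alpha,\beta)$ Somos-4 sequence with the same parameters. A second application with $u=v=1$ and $k=1$ handles the shift $h_n=H_{n+1}(F)$.

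\emph{Main obstacle: the case $s=0$,} where the rescaling $Q=sF$ and the division by $s$ break down. To avoid it, I treat $r,s$ as indeterminates and work in $\mathbb Q(r,s)$, as the paper stipulates after \cref{prop:wz-universal}. The coefficients $u_n$, and hence the Hankel determinants $H_n(F)$, are polynomials in $r,s$. The Somos-4 identity for $(h_n)$ is therefore a polynomial identity in $r,s$. Once it holds over $\mathbb Q(r,s)$, it holds under every specialization, including $s=0$.

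Beyond that, the only remaining care is the bookkeeping of initial terms in Step 1: the recurrence starts at $n=3$ while $u_0=u_1=u_2=1$ are given separately, so I must check that those boundary corrections are exactly right.
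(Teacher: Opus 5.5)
Your proof is correct, but it enters the Wang--Zhang machinery at a different point than the paper does. The paper first performs one Sulanke--Xin step (\cref{lem:sx} with $(a,b,c,d,e,f)=(1,1-r-s,-r-s,0,-s,0)$). This gives $H_{n+1}(F)=H_n(G)$ with $G$ in Wang--Zhang form and $a_0=r+2s-1$, and the paper then applies \cref{thm:wz}. You instead absorb the coefficient $-s$ of $x^2F^2$ through the rescaling $Q=sF$ and apply \cref{prop:wz-universal} to $Q$ directly.

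The two routes are closely linked. The paper's $G$ is exactly the first iterate of your $Q_0=sF$ in the orbit from the proof of \cref{prop:wz-universal}: in that notation, $A_1=r+2s-1$, $B_1=1-r-s$, $D_1=2(r+s-1)$ and $F_1=r+s-1$. Since $\Delta=-s$ and $\beta$ are conserved along that orbit, both routes give $\bigl(s^2,s^3(r+2s-2)\bigr)$.

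Your route yields a slightly stronger conclusion: the unshifted transform $(H_n(F))_{n\ge0}$ already satisfies the recurrence from $n=4$ on. For example, at $(r,s)=(1,2)$ one has $464\cdot1=4\cdot20\cdot1+24\cdot4^2$. Because you use the polynomial expressions $\Delta$ and $\Gamma$, you also never meet the paper's apparent singularity at $r+2s=1$.

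The cost is the division by $s$ when passing from $H_n(Q)=s^nH_n(F)$ back to $F$. Your genericity argument handles this correctly; equivalently, apply \cref{prop:wz-universal} over $\mathbb{Z}[r,s]$ and cancel $s^{2n-4}$ in that integral domain. The paper's Sulanke--Xin step has leading coefficient $1$ and needs no such cancellation.

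Your boundary bookkeeping in Step~1 and your values of $\Delta$, $\Gamma$ and $\beta$ are all correct.
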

\begin{proof}
By summing \eqref{eq:new-a160702-recurrence}, we obtain
\begin{equation}\label{eq:new-a160702-functional}
 F(x)=\frac{1+(1-r-s)x}{1-(r+s)x-sx^2F(x)}.
\end{equation}
Apply \cref{lem:sx} with
\[ (a,b,c,d,e,f) =(1,1-r-s,-r-s,0,-s,0).\]
It follows that \(H_{n+1}(F)=H_n(G)\), where
\[ G(x)= \frac{r+2s-1+(1-r-s)x}{1-(r+s)x+2(r+s-1)x^2+x^2(-1+(r+s-1)x)G(x)}.
\]
Thus \(G(x)\) satisfies the Wang--Zhang condition with
\begin{align*}
 a_0&=r+2s-1,\quad b_0=1-r-s,\quad c_0=-r-s,\qquad d_0=2(r+s-1),\qquad f_0=r+s-1,\\
 f_1&=\frac{r+s-1}{r+2s-1},\qquad a_1=\frac{s(rs+r+2s^2-1)}{(r+2s-1)^2},\qquad c_0+f_0+f_1=-\frac{s}{r+2s-1}.
\end{align*}
Substitution in \cref{thm:wz} gives $\alpha=s^2$ and $\beta=s^3(r+2s-2)$.
The apparent singularity at \(r+2s-1=0\), and all other degenerate specializations, are covered by \cref{rem:specialization}.
\end{proof}

\begin{corollary}{\em (Conjectured in \cite[A160703]{Sloane23})}\label{cor:new-a160703}
For \((r,s)=(1,2)\) in \cref{thm:new-a160702}, \((u_n)_{n\geq 0}\) is OEIS sequence [A160702] and $(h_n)_{n\geq0}$ is OEIS sequence [A160703].  The latter satisfies
\[h_nh_{n-4}=4h_{n-1}h_{n-3}+24h_{n-2}^2\qquad(n\geq4).
\]
\end{corollary}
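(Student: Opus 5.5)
The plan is to specialize \cref{thm:new-a160702} at $(r,s)=(1,2)$ rather than redo any Hankel computation. First I will note that with $r=1$ and $s=2$ the recurrence \eqref{eq:new-a160702-recurrence} becomes
\[
u_0=u_1=u_2=1,\qquad u_n=u_{n-1}+2\sum_{k=0}^{n-2}u_ku_{n-1-k}\quad(n\geq3).
\]
I will check that this is the defining recurrence of OEIS entry [A160702] by comparing the first several terms: $u_3=1+2(u_0u_2+u_1u_1)=5$, then $u_4$, $u_5$, and so on.

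Next I will identify $(h_n)$ with [A160703]. That entry is described as the Hankel transform of the shifted sequence $(u_{n+1})_{n\ge0}$, offset so that its first term is $H_1(F)=u_1=1$. This is precisely $h_n=H_{n+1}(F)$. I will confirm the identification by computing the first few values directly, for example $h_1=H_2(F)=u_1u_3-u_2^2=4$, together with one or two more terms.

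Finally, I will substitute into the parameters of \cref{thm:new-a160702}:
\[
\alpha=s^2=4,\qquad \beta=s^3(r+2s-2)=8\cdot 3=24.
\]
This gives $h_nh_{n-4}=4h_{n-1}h_{n-3}+24h_{n-2}^2$ for $n\ge4$.

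In the non-degenerate case no obstacle arises: here $r+2s-1=4\neq0$, so the Wang--Zhang condition from \cref{thm:wz} applies without appealing to \cref{rem:specialization}. The only step requiring care is matching offsets and indexing conventions with the OEIS entries. I will handle this by comparing initial terms numerically.
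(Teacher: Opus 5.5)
Your proposal is correct and follows the paper's proof essentially verbatim. Like the paper, you specialize \cref{thm:new-a160702} at $(r,s)=(1,2)$ to obtain $(\alpha,\beta)=(s^2,s^3(r+2s-2))=(4,24)$, and you identify the sequences with the OEIS entries through the defining recurrence and the initial determinants $1,4,20,464$.

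Your side remark that $r+2s-1=4\neq0$ lets you bypass \cref{rem:specialization} is slightly overstated. By that remark, the original Wang--Zhang argument also needs all the intermediate leading coefficients to be nonzero, not only $a_0$. The remark is harmless, though, since \cref{thm:new-a160702} already holds for all $(r,s)$ via \cref{prop:wz-universal}.
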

\begin{proof}
The recurrence \eqref{eq:new-a160702-recurrence} with \((r,s)=(1,2)\) is the defining recurrence recorded for [A160702]. 
The first four determinants are \(1,4,20,464\), and \cref{thm:new-a160702} gives the parameters \((4,24)\).  These four initial values and the recurrence determine the resulting positive sequence inductively.  It is precisely the Hankel sequence
and recurrence recorded as [A160703].
\end{proof}

Next, we conclude this section by proving a conjecture that remained unresolved in Barry's paper \cite{Barry2022Conjectures}.

\begin{corollary}{\em (Conjectured in \cite[Page 3]{Barry2022Conjectures})}\label{thm:2022-unnumbered}
Let \(g(0)=1\) satisfy
\[g(x)=\frac1{1-\dfrac{x}{1-rx}-x^2g(x)}.
\]
Then the shifted Hankel sequence \((H_{n+1}(g))_{n\geq0}\) is a $\bigl((r-1)^2,\,4r\bigr)$ Somos-4 sequence.
\end{corollary}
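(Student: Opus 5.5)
The plan is to bring $g$ into the form of \cref{lem:sx}, apply that lemma once to absorb the shift in the Hankel index, and then read off $(\alpha,\beta)$ from \cref{prop:wz-universal}.

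\emph{Step 1: clear the inner fraction.} Multiply the numerator and denominator of the defining equation by $1-rx$. This gives
\[
g(x)=\frac{1-rx}{1-(r+1)x-x^2(1-rx)\,g(x)}
     =\frac{1-rx}{1-(r+1)x+x^2(-1+rx)\,g(x)}.
\]
This is exactly the shape in \cref{lem:sx}, with
\[
(a,b,c,d,e,f)=(1,-r,-(r+1),0,-1,r).
\]
Since $a=1\neq 0$, the lemma applies for every value of $r$.

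\emph{Step 2: apply \cref{lem:sx}.} Substituting these parameters into the formulas for $A$, $B$ and $D$ gives the following.
\begin{itemize}
\item For $A$: $A=-\bigl(-1-r(r+1)+r^2\bigr)=1+r$.
\item For $B$: $B=-\bigl(r+r(r+1)^2-2r^2(r+1)+r^3\bigr)=-2r$.
\item For $D$: $D=-\bigl(-2r(r+1)+2r^2\bigr)=2r$.
\end{itemize}
Moreover $-b/a=r$. Hence $H_{n+1}(g)=H_n(G)$ for all $n\geq 0$, where
\[
G(x)=\frac{1+r-2rx}{1-(r+1)x+2rx^2+x^2(-1+rx)\,G(x)}.
\]
This $G$ has the Wang--Zhang form \eqref{eq:wz-form} with
\[
(a_0,b_0,c_0,d_0,f_0)=(1+r,\,-2r,\,-(r+1),\,2r,\,r).
\]

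\emph{Step 3: apply \cref{prop:wz-universal} in its polynomial form.} I use the polynomial form rather than \cref{thm:wz}, because $a_0=1+r$ may vanish (at $r=-1$), and \cref{prop:wz-universal} needs no nonvanishing hypothesis. The parameters come out as follows.
\begin{itemize}
\item $\Delta=a_0(c_0+f_0)-b_0=-(1+r)+2r=r-1$, so $\alpha=(r-1)^2$.
\item $\Gamma=(1+r)^3-2r(1+r)^2+2r(1+r)^2-4r^2=(1+r)^3-4r^2$.
\item $a_0-d_0-c_0f_0-f_0^2=1-r+r(r+1)-r^2=1$.
\end{itemize}
Therefore
\[
\beta=\Gamma-a_0\Delta^2=(1+r)\bigl[(1+r)^2-(r-1)^2\bigr]-4r^2=4r(1+r)-4r^2=4r.
\]
The Somos-4 identity for $H_n(G)$ then transfers verbatim to $(H_{n+1}(g))_{n\geq0}$.

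The only real risk is algebraic slips in evaluating $B$ and $\beta$; I would double-check these against the first few Hankel determinants of $g$. No degeneracy issue arises, because the lemma's leading coefficient is $a=1$ and \cref{prop:wz-universal} needs no nonvanishing assumptions.
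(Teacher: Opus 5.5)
Your proposal is correct and follows the paper's proof essentially verbatim. It uses the same rewriting with $(a,b,c,d,e,f)=(1,-r,-r-1,0,-1,r)$, one application of \cref{lem:sx}, and the same Wang--Zhang data $(a_0,b_0,c_0,d_0,f_0)=(r+1,-2r,-r-1,2r,r)$; the only difference is that you read off $\alpha,\beta$ from the polynomial $\Delta,\Gamma$ formulas of \cref{prop:wz-universal} rather than from $f_1,a_1$ in \cref{thm:wz}, which absorbs the case $r=-1$ that the paper treats by a separate specialization remark.
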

\begin{proof}
Rewriting the defining equation as
\[
 g(x)=\frac{1-rx}{1-(r+1)x-x^2(1-rx)g(x)}
\]
and applying \cref{lem:sx} with
\[
 (a,b,c,d,e,f)=(1,-r,-r-1,0,-1,r)
\]
gives \(H_{n+1}(g)=H_n(G)\), where
\[
 G(x)=\frac{r+1-2rx}{1-(r+1)x+2rx^2+x^2(-1+rx)G(x)}.
\]
Thus \(G(x)\) satisfies the Wang--Zhang condition with
\[a_0=r+1,\quad b_0=-2r,\quad c_0=-r-1,\quad d_0=2r,\quad f_0=r.
\]
The auxiliary quantities are
\[
 f_1=\frac{2r}{r+1},\qquad
 a_1=\frac{r^3-r^2+3r+1}{(r+1)^2},\qquad
 c_0+f_0+f_1=\frac{r-1}{r+1}.
\]
Substitution in \cref{thm:wz} gives \(\alpha=(r-1)^2\) and \(\beta=4r\).
The singular value \(r=-1\) follows by the specialization principle in \cref{rem:specialization}.
\end{proof}

\section{The Hurwitz transform conjecture}\label{sec:new-hurwitz}

This section mainly proves a conjecture in \cite[Example~7]{Barry2012Hurwitz} concerning the Hurwitz transform and the Hankel transform. In fact, we prove a general case.
Before that, we need the prove \cref{lem:parity-decimation}.
It shows that the even and odd subsequences of an $(\alpha,\beta)$ Somos-4 sequence also forms a Somos-4 sequence.

\begin{proof}[Proof of \cref{lem:parity-decimation}]
By the Somos identity, we can verify that the following result holds:
\begin{equation}\label{eq:new-qrt-map}
 x_{n+1}x_{n-1}=\frac{\alpha x_n+\beta}{x_n^2}.
\end{equation}
Substitution of \eqref{eq:new-qrt-map} at two consecutive indices shows directly that \(J\) is independent of \(n\). 
In fact, by \eqref{eq:new-qrt-map}, we have 
$$x_nx_{n+1}=\frac{\alpha x_n+\beta}{x_{n-1}x_n}.$$
Thus we get 
$$x_nx_{n+1}+\frac{\alpha}{x_n}=\frac{\alpha}{x_{n-1}}+\frac{\alpha}{x_{n}}+\frac{\beta}{x_{n-1}x_n}.$$
Furthermore, we obtain
$$\frac{x_n^2(x_{n+1}+x_{n-1})+\alpha}{x_n}=x_n(x_{n+1}+x_{n-1})+\frac{\alpha}{x_n}=J.$$
In other words, one has
\begin{align}\label{Equation-XXN11-2}
x_{n-1}+x_{n+1}=\frac{Jx_n-\alpha}{x_n^2}.
\end{align}

Now we consider the subsequence $(S_{2m+\varepsilon})_{m\geq 0}$. According to 
$$S_{2m+2+\varepsilon}S_{2m-2+\varepsilon}=\alpha S_{2m+1+\varepsilon}S_{2m-1+\varepsilon}+\beta S_{2m+\varepsilon}^2,$$ 
we have
\begin{align}\label{Equation-Somos-4-YMS}
y_m:=\frac{S_{2m+2+\varepsilon}S_{2m-2+\varepsilon}}{S_{2m+\varepsilon}^2}=\frac{\alpha S_{2m+1+\varepsilon}S_{2m-1+\varepsilon}+\beta S_{2m+\varepsilon}^2}{S_{2m+\varepsilon}^2}=\alpha x_{2m+\varepsilon}+\beta.
\end{align}
By
\[x_{n\pm2} =\frac{\alpha x_{n\pm1}+\beta}{x_{n\pm1}^2x_n},
\]
we have
$$y_{m+1}=\alpha \frac{\alpha x_{2m+1+\varepsilon}+\beta}{x_{2m+1+\varepsilon}^2 x_{2m+\varepsilon}}+\beta=\frac{\alpha(\alpha x_{2m+1+\varepsilon}+\beta)+\beta x_{2m+1+\varepsilon}^2 x_{2m+\varepsilon}}{x_{2m+1+\varepsilon}^2 x_{2m+\varepsilon}}.$$
Similarly, we have
$$y_{m-1}=\frac{\alpha(\alpha x_{2m-1+\varepsilon}+\beta)+\beta x_{2m-1+\varepsilon}^2 x_{2m+\varepsilon}}{x_{2m-1+\varepsilon}^2 x_{2m+\varepsilon}}.$$
Observing the numerator of $y_{m+1}y_{m-1}$, we obtain the following.
\begin{small}
\begin{align*}
& (\alpha(\alpha x_{2m+1+\varepsilon}+\beta)+\beta x_{2m+1+\varepsilon}^2 x_{2m+\varepsilon}) (\alpha(\alpha x_{2m-1+\varepsilon}+\beta)+\beta x_{2m-1+\varepsilon}^2 x_{2m+\varepsilon})
\\ =& \alpha^4 x_{2m+1+\varepsilon} x_{2m-1+\varepsilon} +\alpha^3\beta(x_{2m+1+\varepsilon} +x_{2m-1+\varepsilon}) +\alpha^2\beta x_{2m+1+\varepsilon}x_{2m+\varepsilon} x_{2m-1+\varepsilon}(x_{2m+1+\varepsilon}+x_{2m-1+\varepsilon})
 \\ & \quad +\alpha^2\beta^2+\alpha\beta^2x_{2m+\varepsilon}(x_{2m+1+\varepsilon}^2+x_{2m-1+\varepsilon}^2)+\beta^2x_{2m+1+\varepsilon}^2 x_{2m+\varepsilon}^2 x_{2m-1+\varepsilon}^2.
\end{align*}
\end{small}
From Equations \eqref{eq:new-qrt-map} and \eqref{Equation-XXN11-2}, by a lengthy calculation, we obtain 
$$(\alpha(\alpha x_{2m+1+\varepsilon}+\beta)+\beta x_{2m+1+\varepsilon}^2 x_{2m+\varepsilon}) (x_{2m-1+\varepsilon}^2 x_{2m+\varepsilon})=\frac{\alpha \Lambda^2}{x_{2m+\varepsilon}}+\frac{\beta(\beta^3-\alpha^2\Lambda)}{x_{2m+\varepsilon}^2}.$$
Therefore, we have
\begin{align*}
y_{m-1}y_{m+1}&=\left(\frac{\alpha \Lambda^2}{x_{2m+\varepsilon}}+\frac{\beta(\beta^3-\alpha^2\Lambda)}{x_{2m+\varepsilon}^2}\right)\frac{1}{x_{2m+1+\varepsilon}^2 x_{2m+\varepsilon}^2 x_{2m-1+\varepsilon}^2}
\\& = \left(\frac{\alpha \Lambda^2}{x_{2m+\varepsilon}}+\frac{\beta(\beta^3-\alpha^2\Lambda)}{x_{2m+\varepsilon}^2}\right)\frac{x_{2m+\varepsilon}^2}{(\alpha x_{2m+\varepsilon} +\beta)^2}
\\ &= \left( \Lambda^2(\alpha x_{2m+\varepsilon}+\beta) -\beta \Lambda^2+\beta(\beta^3-\alpha^2\Lambda) \right)\frac{1}{(\alpha x_{2m+\varepsilon} +\beta)^2}
\\ &= \frac{\Lambda^2y_m+ \beta\bigl(\beta^3-\Lambda(\Lambda+\alpha^2)\bigr)}{y_m^2}.
\end{align*}
Now, from Equations \eqref{eq:new-qrt-map} and \eqref{Equation-Somos-4-YMS}, we see that the sequence 
\((S_{2m+\varepsilon})_{m\geq 0}\) is a
\[\left(\Lambda^2,\,\beta\bigl(\beta^3-\Lambda(\Lambda+\alpha^2)\bigr)\right)
\]
Somos-4 sequence. 
\end{proof}

Given sequences $(a_n)_{n\geq 0}$ and $(b_n)_{n\geq 0}$, define the Hurwitz matrix associated with $a_n$ and $b_{n}$ as follows: If \(n = 2m\), then the \emph{Hurwitz matrix of order \(n\)} is defined to be the matrix
\[
M_n = 
\begin{pmatrix}
a_0 & a_1 & a_2 & \cdots & a_m & \cdots & a_{2m} \\
b_0 & b_1 & b_2 & \cdots & b_m & \cdots & b_{2m} \\
0 & a_0 & a_1 & \cdots & a_{m-1} & \cdots & a_{2m-1} \\
0 & b_0 & b_1 & \cdots & b_{m-1} & \cdots & b_{2m-1} \\
\vdots & \vdots & \vdots & \ddots & \vdots & \ddots & \vdots \\
0 & 0 & 0 & \cdots & a_0 & \cdots & a_m
\end{pmatrix}.
\]
If \(n = 2m + 1\), then the \emph{Hurwitz matrix of order \(n\)} is defined to be the matrix
\[
M_n = 
\begin{pmatrix}
a_0 & a_1 & a_2 & \cdots & a_m & \cdots & a_{2m+1} \\
b_0 & b_1 & b_2 & \cdots & b_m & \cdots & b_{2m+1} \\
0 & a_0 & a_1 & \cdots & a_{m-1} & \cdots & a_{2m} \\
0 & b_0 & b_1 & \cdots & b_{m-1} & \cdots & b_{2m} \\
\vdots & \vdots & \vdots & \ddots & \vdots & \ddots & \vdots \\
0 & 0 & 0 & \cdots & a_0 & \cdots & a_{m+1} \\
0 & 0 & 0 & \cdots & b_0 & \cdots & b_{m+1}
\end{pmatrix}.
\]

The sequence of determinants $\mathbb{H}_n = \det (M_n)$ ($n\geq 0$) is called the \emph{Hurwitz transform} of the sequences \(a_n\) and \(b_n\).
We shall sometimes write \(\mathbb{H}_n(a_n, b_n)\) or \(\mathbb{H}_n(a, b)\) for the transform of \(a_n\) and \(b_n\), to make the dependence on \(a_n\) and \(b_n\) more explicit. It is clear that if \(a_n\) and \(b_n\) are integer sequences, then \(\mathbb{H}_n\) is an integer sequence. 
Recall that $0^n=1$ if $n=0$, and $0^n=0$ otherwise.
We define the \emph{Hurwitz transform of a single sequence} $a_n$ to be the Hurwitz transform of the pair $(a_n,0^n)$.

We now consider the Hurwitz transform of the following sequence.
For a parameter \(s\), define
\[u_0=u_1=u_2=1,\qquad u_n=u_{n-1} +s\sum_{k=0}^{n-2}u_ku_{n-1-k}\quad(n\geq3),
\]
and let
\[ F_s(x)=\sum_{n\geq0}u_{n+1}x^n,\qquad F_s^*(x)=\frac{F_s(x)-1}{x}.
\]
Note that when $s=2$, the sequence $(u_n)_{n\geq 0}$ is [A160702] on OEIS \cite{Sloane23}.
Let
\[\mathcal U_n=\mathbb{H}_n\bigl((u_{n+1})_{n\geq0}, (0^n)_{n\geq0}\bigr).
\]
By Barry's determinant factorization \cite[Example~6 and Proposition~1]{Barry2012Hurwitz}, it is known that $\mathcal U_n$ can be represented by the following Hankel determinants.
\begin{equation}\label{eq:new-hurwitz-interlacing}
 \mathcal U_{2m}=H_{m+1}(F_s),\qquad \mathcal U_{2m+1}=(-1)^{m+1}H_{m+1}(F_s^*).
\end{equation}

\begin{theorem}\label{thm:new-hurwitz}
The Hurwitz transform of a single sequence $u_{n+1}$ (i.e., the sequence $(\mathcal U_n)_{n\geq 0}$) is a \((-s,s)\) Somos-4 sequence.
\end{theorem}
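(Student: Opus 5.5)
The plan is to realise the whole interlaced sequence $(\mathcal U_n)$ as a product of the coefficients of a single Stieltjes continued fraction. Then the Somos-4 identity reduces to a second-order "QRT-type" identity for those coefficients, exactly as in Steps 3--4 of the proof of \cref{prop:wz-universal}. Throughout, $s$ is treated as an indeterminate, so $F_s\in\mathbb Z[s][[x]]$. We work over $\mathbb Q(s)$, where all the relevant quantities are nonzero, and at the end specialise by the injectivity argument of Step~5. Since each $\mathcal U_n$ is a polynomial in $s$ by \eqref{eq:new-hurwitz-interlacing}, an identity in $\mathbb Q(s)$ specialises to every value of $s$.

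\emph{Step 1 (S-fraction and the interlacing).} From \eqref{eq:new-a160702-functional} with $r=1$, $F_s$ is quadratic:
\[
F_s(x)=\frac{1-sx}{1-(1+s)x-sx^2F_s(x)}.
\]
I would expand $F_s=1/(1-\lambda_1x/(1-\lambda_2x/(1-\cdots)))$. Each tail $G_k$ is again quadratic of the shape $G_k=(a_k+b_kx)/(1+c_kx+d_kx^2+x^2(e_k+f_kx)G_k)$, and one step $G_k=1/(1-\lambda_kxG_{k+1})$ maps the parameters rationally, in the spirit of \cref{lem:sx}. The generic nonvanishing of the $\lambda_k$ over $\mathbb Q(s)$ would be checked as in Step~2 of \cref{prop:wz-universal}, by specialising to a point where the orbit is stationary (for instance $s=0$, where $F_0=1/(1-x)$ is degenerate, so a better choice may be needed), or alternatively by $s\to$ a value where the Catalan function appears.

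The classical S-fraction formulas (obtained from \cref{lem:jacobi-product} by contraction to a J-fraction with $v_k=\lambda_{2k-1}\lambda_{2k}$, and similarly for $F_s^*$ with $v_k=\lambda_{2k}\lambda_{2k+1}$ and $\mu_0=\lambda_1$) give
\[
H_n(F_s)=\prod_{k\ge1}(\lambda_{2k-1}\lambda_{2k})^{n-k},\qquad H_n(F_s^*)=\lambda_1^n\prod_{k\ge1}(\lambda_{2k}\lambda_{2k+1})^{n-k}.
\]
Combined with \eqref{eq:new-hurwitz-interlacing}, this yields
\[
\frac{\mathcal U_{n+1}\mathcal U_{n-1}}{\mathcal U_n^2}=\varepsilon_n\lambda_{n+1}
\]
for an explicit sign $\varepsilon_n$. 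Only the four-step pattern of the signs $(-1)^{m+1}$ matters here, and I expect $\varepsilon_n=-1$ throughout; the exact index shift will be fixed from small cases.

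\emph{Step 2 (the invariant).} With $x_n:=\mathcal U_{n+1}\mathcal U_{n-1}/\mathcal U_n^2$, the $(-s,s)$ Somos-4 identity for $n\geq4$ is equivalent to
\[
x_{n+1}x_n^2x_{n-1}=-s\,x_n+s,
\]
as in \eqref{eq:new-qrt-map}. I would therefore derive, from the rational parameter recursion of Step~1, two conserved quantities analogous to \eqref{eq:wz-first-conservation} and \eqref{eq:wz-second-conservation}. Eliminating them gives a centred recurrence like \eqref{eq:wz-centered-recurrence}, and then the invariant $\mathcal I_j$ of Step~4, whose value must be computed from $\lambda_1,\lambda_2,\lambda_3$ and shown to equal the required constant. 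The base cases $n=4,5$ can also be checked directly from $\mathcal U_0,\dots,\mathcal U_5$, which are polynomials in $s$ computed from the first few $u_n$.

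\emph{Main obstacle.} I expect the main difficulty to be Step~2. The S-fraction orbit of a quadratic series is less rigid than the J-fraction orbit of \cref{lem:sx}, since the parameters alternate between two shapes, so the conservation laws should be sought for the pair $(\lambda_{2k-1},\lambda_{2k})$ jointly.

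As a fallback, I would use \cref{thm:new-a160702} (with $r=1$) for the even subsequence, which is then $(s^2,s^3(2s-1))$ Somos-4. For the odd subsequence, $F_s^*$ satisfies a quadratic equation to which \cref{lem:sx} and \cref{thm:wz} apply directly. Taking the three-term identity linking $H_{m+1}(F_s)$ and $H_m(F_s^*)$ from the contraction, I would then check that the parameters agree with \cref{lem:parity-decimation} applied to $(\alpha,\beta)=(-s,s)$ with the appropriate $J$. By itself this fallback only gives consistency, so I would combine it with the mixed relation $x_{n+1}x_n^2x_{n-1}=-sx_n+s$ at a single index, propagated via the invariant $J$.
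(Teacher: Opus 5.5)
Your main route fails at Step~1: the identity $\mathcal U_{n+1}\mathcal U_{n-1}/\mathcal U_n^2=\varepsilon_n\lambda_{n+1}$ is false. Put $W_{2m}=H_{m+1}(F_s)$ and $W_{2m+1}=H_{m+1}(F_s^*)$. Your own product formulas then show that $W_{n+1}/W_n$ is the product of the $\lambda_j$ with $1\le j\le n+1$ and $j\equiv n+1\pmod 2$. Hence $\lambda_{n+1}=W_{n+1}W_{n-2}/(W_nW_{n-1})$, and after the signs in \eqref{eq:new-hurwitz-interlacing} this becomes $\lambda_{n+1}=-x_nx_{n-1}$, a product of two consecutive quotients. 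Concretely, $\lambda_1=1$, $\lambda_2=2s$, $\lambda_3=(1-2s)/2$, whereas $x_2=(2s-1)/(4s)$.

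So the Somos-4 identity is not a QRT map in the $\lambda_k$. It reads $\lambda_{n+1}\lambda_{n+2}=s(1-x_n)$. Once this holds at one index, it propagates exactly when the $\lambda_k$ satisfy the local relation $(s-\lambda_n\lambda_{n+1})(s-\lambda_{n+1}\lambda_{n+2})=-s^2\lambda_{n+1}$. Proving that relation from the quadratic equation for $F_s$ would be the real content of a continued-fraction proof, and your Step~2 does not supply it. You never write down the parameter recursion of the S-fraction tails, never exhibit a conserved quantity, and leave the nonvanishing of the $\lambda_k$ open (as you note, $s=0$ is degenerate). As written, the main route is a programme, not a proof.

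Your fallback is in fact the paper's proof, but you abandon it at the decisive point. Your proposed repair, propagating the mixed relation via $J$, is circular: $J$ is only defined and conserved along a sequence already known to satisfy the $(-s,s)$ recurrence. \cref{lem:parity-decimation} goes only from a full sequence to its halves; the converse comes from uniqueness.

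Let $(S_n)$ be the $(-s,s)$ Somos-4 sequence with $(S_0,S_1,S_2,S_3)=(1,-1,2s,-s(2s-1))$, which equals $(\mathcal U_0,\dots,\mathcal U_3)$. For it $x_1=2s$, $x_2=(2s-1)/(4s)$, $J=-s-1$ and $\Lambda=-s$. So by \cref{lem:parity-decimation} both $(S_{2m})$ and $(S_{2m+1})$ are $\bigl(s^2,s^3(2s-1)\bigr)$ Somos-4.

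The halves $D_m=H_{m+1}(F_s)$ and $O_m=(-1)^{m+1}H_{m+1}(F_s^*)$ of $\mathcal U$ satisfy the same recurrence. For $D_m$ this is \cref{thm:new-a160702} with $r=1$; for $O_m$ it is \cref{lem:sx} and \cref{thm:wz} applied to $F_s^*$, plus \cref{lem:somos-invariance} for the sign. A direct computation gives $S_4,S_5,S_6,S_7$ equal to $D_2,O_2,D_3,O_3$. Each parity class of $S$ and of $\mathcal U$ therefore has the same four initial terms and obeys the same recurrence, so they coincide and $\mathcal U_n=S_n$. The induction divides by earlier terms, so it needs them to be nonzero in $\mathbb Q(s)$, a point you should also address.
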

\begin{proof}
We work over the rational-function field \(\mathbb Q(s)\). 
The even determinants in \eqref{eq:new-hurwitz-interlacing} are covered by \cref{thm:new-a160702} with \(r=1\).
The sequence $D_m= \mathcal U_{2m}=H_{m+1}(F_s)$ is an \(\bigl(s^2,s^3(2s-1)\bigr)\) Somos-4 sequence.

By summing the recurrence for \(u_n\), we obtain 
\[ F_s=\frac{1-sx}{1-(s+1)x-sx^2F_s},
\]
and therefore
\begin{equation}\label{eq:new-hurwitz-shifted-f}
 F_s^*=\frac{1+sx}{1-(s+1)x-2sx^2-sx^3F_s^*}.
\end{equation}
By applying \cref{lem:sx} to \eqref{eq:new-hurwitz-shifted-f} with
\[
 (a,b,c,d,e,f)=(1,s,-s-1,-2s,0,-s),
\]
we obtain \(H_{m+1}(F_s^*)=H_m(Q_s)\), where
\[Q_s=\frac{-s(2s-1)+4s^3x}{1-(s+1)x-4s^2x^2+x^2(-1-sx)Q_s}.
\]
This satisfies the Wang--Zhang condition in \cref{thm:wz} with
\begin{align*}
 &a_0=-s(2s-1),\quad b_0=4s^3,\quad c_0=-s-1,\quad
 d_0=-4s^2,\quad f_0=-s,\\
 & f_1=\frac{4s^2}{2s-1},\qquad
 a_1=-\frac{s(6s-1)}{(2s-1)^2},\qquad
 c_0+f_0+f_1=\frac1{2s-1}.
\end{align*}
Therefore, by \cref{thm:wz}, the sequence $D_m^*= H_{m+1}(F_s^*)$ is an \(\bigl(s^2,s^3(2s-1)\bigr)\) Somos-4 sequence.
Thus, the sequence $O_m=(-1)^{m+1}D_m^*$ is an \(\bigl(s^2,s^3(2s-1)\bigr)\) Somos-4 sequence by \cref{lem:somos-invariance}.

The first four terms of $D_m$ and $O_m$ are
\begin{align*}
 (D_0,D_1,D_2,D_3) &=\bigl(1,2s,s^2(2s+1),s^4(8s^2-2s+1)\bigr),\\
 (O_0,O_1,O_2,O_3) &=\bigl(-1,-s(2s-1),s^3(6s-1),4s^7(2s-3)\bigr).
\end{align*}
Now let \((S_n)_{n\geq 0}\) be the \((-s,s)\) Somos-4 sequence with
\[(S_0,S_1,S_2,S_3)=\bigl(1,-1,2s,-s(2s-1)\bigr).
\]
For this sequence, we have the parameters in \cref{lem:parity-decimation} as follows:
\[
 x_1=2s,\qquad x_2=\frac{2s-1}{4s},\qquad
 J=-s-1,\qquad \Lambda=-s.
\]
By \cref{lem:parity-decimation}, we see that the sequence \((S_{2m+\varepsilon})_{m\geq 0}\) ($\varepsilon \in\{0,1\}$) is a
$\bigl(s^2,s^3(2s-1)\bigr)$ Somos-4 sequence. By the recurrence, we get
\begin{align*} 
S_4=s^2(2s+1),\qquad S_5=s^3(6s-1),\qquad S_6=s^4(8s^2-2s+1),\qquad S_7=4s^7(2s-3).
\end{align*}
Hence the even and odd subsequences of \(S_n\) have the same four initial values as \(D_m\) and \(O_m\), respectively. 
Thus, we have  $S_{2m}=D_m$ and $S_{2m+1}=O_m$.
By Equation \eqref{eq:new-hurwitz-interlacing}, we obtain \(\mathcal U_n=S_n\).
This completes the proof.
\end{proof}

\begin{corollary}{\em (Conjectured in \cite[Example~7]{Barry2012Hurwitz})}\label{Corollary-Hurwitz}
For \(s=2\), so that the sequence \((u_n)\) is [A160702] on OEIS, the Hurwitz transform in \eqref{eq:new-hurwitz-interlacing} is a \((-2,2)\) Somos-4 sequence.
\end{corollary}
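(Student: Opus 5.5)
This is the specialization $s=2$ of \cref{thm:new-hurwitz}, so the plan is to verify that the hypotheses of that theorem hold for $s=2$ and then read off the conclusion. First I check that for $s=2$ the recurrence
\[
u_n=u_{n-1}+2\sum_{k=0}^{n-2}u_ku_{n-1-k},\qquad u_0=u_1=u_2=1,
\]
is exactly the recurrence of \cref{thm:new-a160702} with $(r,s)=(1,2)$. By \cref{cor:new-a160703}, this is the defining recurrence of [A160702]. So $(u_n)$ is the stated OEIS sequence, and the Hurwitz transform in \eqref{eq:new-hurwitz-interlacing} is $\mathcal U_n$ with $s=2$.

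Next I specialize the conclusion of \cref{thm:new-hurwitz}. That proof was carried out over $\mathbb Q(s)$, with identities $\mathcal U_n=S_n$ for an explicit $(-s,s)$ Somos-4 sequence $S_n$. The Somos-4 identities for $\mathcal U_n$ are polynomial identities in $s$, since every Hankel determinant involved is a polynomial in $s$. The only point needing care is that some intermediate quantities in that proof carry denominators:
\begin{itemize}
\item $2s-1$ in $f_1$ and $a_1$;
\item $x_1=2s$ and $x_2=(2s-1)/(4s)$ in \cref{lem:parity-decimation}.
\end{itemize}
None of these vanishes at $s=2$. Moreover, by \cref{prop:wz-universal} and \cref{rem:specialization}, the identity $\mathcal U_n\mathcal U_{n-4}=-s\,\mathcal U_{n-1}\mathcal U_{n-3}+s\,\mathcal U_{n-2}^2$ holds in $\mathbb Z[s]$. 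It therefore specializes to $s=2$, giving a $(-2,2)$ Somos-4 sequence.

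As a sanity check I would list the initial terms at $s=2$ from the formulas in that proof. The first eight terms are
\[
S_0,\dots,S_7=1,\,-1,\,4,\,-6,\,20,\,88,\,464,\,512,
\]
and I would verify $S_4S_0=-2S_3S_1+2S_2^2$, that is, $20=-12+32$.

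The main obstacle is the justification that specializing $s\mapsto2$ is legitimate. This holds because $\mathcal U_n$ lies in $\mathbb Z[s]$, so the Somos relation, proved in $\mathbb Q(s)$, is a polynomial identity and survives evaluation at $s=2$.
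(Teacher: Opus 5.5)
Your proposal is correct and matches the paper: the corollary is the immediate specialization $s=2$ of \cref{thm:new-hurwitz}. This is legitimate because $\mathcal U_n\mathcal U_{n-4}=-s\,\mathcal U_{n-1}\mathcal U_{n-3}+s\,\mathcal U_{n-2}^2$ is an identity in $\mathbb Q(s)$ between elements of $\mathbb Z[s]$, so it survives evaluation at $s=2$; your initial terms $1,-1,4,-6,20,88,464,512$ also check out. One small correction: this interleaved $(-s,s)$ identity comes from \cref{thm:new-hurwitz} itself (via \cref{lem:parity-decimation}), not directly from \cref{prop:wz-universal}, which only gives the $\bigl(s^2,s^3(2s-1)\bigr)$ identities for the even and odd subsequences, and once you specialize the polynomial identity there is no need to check that $2s-1$ and $4s$ are nonzero at $s=2$.
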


\section{Catalan-type recurrences}\label{sec:catalan-2021}

This section resolves the determinant conjectures in \cite{Barry2021Catalan}. 
We retain the parameters used in that paper, but continue to use the convention \(H_0=1\).

We first record an evaluation that is useful for the first family.
Before that, we need some results on orthogonal polynomials.

\subsection{Orthogonal polynomials}

We recall \cref{lem:jacobi-product}, which is closely related to orthogonal polynomials.

A polynomial sequence $(p_n(x))_{n\geq 0}$ is called \emph{orthogonal} if $p_n(x)$ has degree $n$ and there exists a linear functional $L$ such that $L(p_n(x)p_m(x))=\delta_{m,n}c_n$ for some sequence $(c_n)_{n\geq 0}$ of nonzero numbers, where $\delta_{m,n}$ denotes the Kronecker delta (i.e., $\delta_{m,n}=1$ if $m=n$ and $\delta_{m,n}=0$ otherwise).

\begin{lemma}{\em (\cite{Viennot}, \cite[Theorem 50.1]{Wall2000}, \cite[Theorem 12]{Krattenthaler1999}, \cite[Theorem 29]{Krattenthaler05})}
Let $(p_n(x))_{n \geq 0}$ be a sequence of monic polynomials, the polynomial $p_n(x)$ having degree $n$. Then the sequence 
$(p_n(x))_{n \geq 0}$ is orthogonal if and only if there exist sequences $(a_n)_{n \geq 1}$ and $(b_n)_{n \geq 1}$, with $b_n \neq 0$ for all $n \geq 1$, such that the three-term recurrence  
\begin{align*}
p_{n+1}(x) = (a_n + x)p_n(x) - b_n p_{n-1}(x), \quad \text{for } n \geq 1,
\end{align*}
holds, with initial conditions $p_0(x) = 1$ and $p_1(x) = x + a_0$.
\end{lemma}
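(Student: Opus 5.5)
This is Favard's theorem, and I would prove it by identifying orthogonality with the three-term recurrence via the moment functional. I would prove the two directions separately, working over the field of coefficients.

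\emph{Orthogonal implies recurrence.} Suppose $L(p_np_m)=\delta_{m,n}c_n$ with every $c_n\neq0$. I would proceed in four steps.
\begin{enumerate}
\item Since each $p_k$ is monic of degree $k$, the family $\{p_k\}_{k\ge0}$ is a basis of the polynomial ring. Also $L(p_n q)=0$ for every $q$ of degree $<n$.
\item Expand $xp_n(x)$ in this basis. Because $xp_n$ is monic of degree $n+1$,
\[xp_n=p_{n+1}+\sum_{k\le n}\gamma_{n,k}p_k,\qquad \gamma_{n,k}=L(xp_np_k)/c_k.\]
\item For $k\le n-2$, the product $xp_k$ has degree $<n$, so $\gamma_{n,k}=L(p_n\cdot xp_k)/c_k=0$. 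This leaves $xp_n=p_{n+1}-a_np_n+b_np_{n-1}$ for suitable $a_n$ and $b_n$, which rearranges to the stated recurrence $p_{n+1}=(x+a_n)p_n-b_np_{n-1}$. For $n=0$ the same expansion gives $p_1=x+a_0$.
\item To identify $b_n$, compute $b_nc_{n-1}=L(xp_np_{n-1})$. Write $xp_{n-1}=p_n+(\text{lower-degree terms})$; orthogonality kills the lower terms, so $b_n=c_n/c_{n-1}\neq0$.
\end{enumerate}

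\emph{Recurrence implies orthogonal.} Given the recurrence with all $b_n\neq0$, I would construct $L$ on the basis $\{p_n\}$ by setting $L(p_0)=1$ and $L(p_n)=0$ for $n\ge1$. This defines $L$ uniquely by linearity. I would then prove by induction on $m$ that $L(x^jp_n)=0$ for $0\le j<n$ and $L(x^np_n)=b_1\cdots b_n$. Combining these gives $L(p_mp_n)=\delta_{mn}b_1\cdots b_n$, with $c_n=b_1\cdots b_n\ne0$.

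The induction step for the claim rests on the identity $x^jp_n=x^{j-1}(xp_n)$ together with $xp_n=p_{n+1}-a_np_n+b_np_{n-1}$. Applying $L$ and the induction hypothesis, the terms with $p_{n+1}$ and $p_n$ vanish when $j-1<n$. The $p_{n-1}$ term contributes only when $j-1=n-1$, and then it contributes $b_n\cdot b_1\cdots b_{n-1}$.

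The main obstacle is organizing this double induction so it is well-founded. I would induct on $j$, proving the statement for all $n$ simultaneously; the base case $j=0$ is just the definition of $L$.

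For the leading constant there is a cleaner route: the difference $x^np_n-p_n^{\,2}$ is a combination of the polynomials $x^jp_n$ with $j<n$. Hence $L(p_n^2)=L(x^np_n)=c_n$.
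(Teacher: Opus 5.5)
Your proof is correct and complete. In the forward direction you expand $xp_n$ in the basis $\{p_k\}$ and use orthogonality to kill all but three coefficients, with $b_n=c_n/c_{n-1}\neq0$. In the converse direction you define $L$ by $L(p_n)=\delta_{n,0}$ and obtain $L(x^jp_n)=0$ for $j<n$ and $L(x^np_n)=b_1\cdots b_n$ by induction on $j$. This is the standard linear-algebraic proof of Favard's theorem, and the $n=0$ case of your expansion correctly supplies $a_0$, which the statement uses even though it indexes $(a_n)_{n\ge1}$.

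The paper gives no proof of this lemma; it is quoted from Viennot, Wall and Krattenthaler, so there is no argument to compare yours against. Your Step~4 is, however, exactly the computation the paper carries out inline in the proof of \cref{lem:constant-tail-border}. There it writes $xP_{k-1}=P_k+(\text{lower terms})$ to get $L(xP_{k-1}P_k)=L(P_k^2)$ and hence $L(P_k^2)=v^k$.

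Two cosmetic points:
\begin{enumerate}
\item The induction you first announce as ``on $m$'' is the induction on $j$ (for all $n\ge j$ simultaneously) that you describe afterwards.
\item The closing ``cleaner route'' is not an alternative. The identity $L(p_n^2)=L(x^np_n)$, from $(p_n-x^n)p_n$ being a combination of the $x^jp_n$ with $j<n$, is precisely the step your sentence ``combining these gives $L(p_mp_n)=\delta_{mn}b_1\cdots b_n$'' needs in the case $m=n$.
\end{enumerate}
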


\begin{lemma}{\em (\cite{Viennot}, \cite[Theorem 51.1]{Wall2000}, \cite[Theorem 13]{Krattenthaler1999}, \cite[Theorem 29]{Krattenthaler05})} \label{Lemma-Orthogonal-L}
Let $(p_n(x))_{n \geq 0}$ be a sequence of monic polynomials, the polynomial $p_n(x)$ having degree $n$, which is orthogonal with respect to some functional $L$. Let  
\begin{align}\label{Equation-UV-Orthogonal}
p_{n+1}(x) = (-u_n + x)p_n(x) - v_np_{n-1}(x)
\end{align}
be the corresponding three-term recurrence. Then the generating function $\sum_{k\geq 0} \mu_k x^k$ for the moments $\mu_k = L(x^k)$ satisfies \eqref{Equation-FX-230-UV} with the $-u_i$'s and $v_i$'s being the coefficients in the three-term recurrence \eqref{Equation-UV-Orthogonal}.
\end{lemma}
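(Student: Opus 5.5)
The statement to prove is \cref{Lemma-Orthogonal-L}: the moment generating function of a monic orthogonal sequence has the J-fraction expansion \eqref{Equation-FX-230-UV}, with $u_n$ and $v_n$ read off from the three-term recurrence. This is classical, and I would prove it in three steps.

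\textbf{Step 1: linearize the functional.} The recurrence $p_{n+1}=(x-u_n)p_n-v_np_{n-1}$ lets us write $x p_n=p_{n+1}+u_np_n+v_np_{n-1}$. Since the $p_n$ form a basis of $\K[x]$, orthogonality makes $L$ determined by $L(p_n)=\mu_0\delta_{n,0}$. Iterating the multiplication rule gives
\[
x^k=\sum_{n} \ell_{k,n}\,p_n(x),
\]
where $\ell_{k,n}$ is the weighted count of Motzkin paths from height $0$ to height $n$ with $k$ steps. The weights are: an up step has weight $1$, a level step at height $i$ has weight $u_i$, and a down step from height $i$ has weight $v_i$. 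Hence $\mu_k=L(x^k)=\mu_0\,\ell_{k,0}$, the weighted count of Motzkin paths returning to level $0$.

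\textbf{Step 2: Flajolet's first-return decomposition.} Let $M_j(x)$ be the generating function of weighted Motzkin paths that stay at height $\ge j$ and start and end at height $j$. Splitting at the first step gives
\[
M_j=1+u_jxM_j+v_{j+1}x^2M_{j+1}M_j .
\]
The three cases are: the empty path; a first level step; or a first up step, then a path at level $\ge j+1$ until the first return, followed by an arbitrary path at level $\ge j$. This yields
\[
M_j=\frac{1}{1-u_jx-v_{j+1}x^2M_{j+1}},
\]
and unrolling gives exactly \eqref{Equation-FX-230-UV} after multiplying by $\mu_0$.

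\textbf{Step 3: convergence of the infinite fraction.} This is the only mildly delicate step. The coefficient of $x^k$ in the infinite fraction only involves paths of length $k$, which have height at most $k/2$. So truncating the fraction at depth $>k/2$ does not change coefficients up to $x^k$. Thus the infinite J-fraction is a well-defined formal power series, it is the limit of its truncations, and it agrees with $\sum\mu_kx^k$ coefficientwise. No nonvanishing of the $v_n$ is needed here beyond what orthogonality already guarantees.

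An alternative route would be to show $L(x^k p_n)$ agrees via the $\ell_{k,n}$ and use Lemma~\ref{lem:jacobi-step} inductively, but the path argument above is the one I would write down.
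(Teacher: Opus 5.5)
Your proof is correct. The paper gives no proof of this lemma and only cites Viennot, Wall and Krattenthaler; your argument (expanding $x^k$ in the basis $(p_n)$ as weighted Motzkin paths, using $L(p_n)=\mu_0\delta_{n,0}$, then applying the first-return decomposition $M_j=1/(1-u_jx-v_{j+1}x^2M_{j+1})$) is exactly the Flajolet--Viennot combinatorial proof behind the Viennot and Krattenthaler references. I would drop your closing aside about an alternative route via \cref{lem:jacobi-step}: that lemma only transports Hankel determinants, which do not depend on the $u_n$ at all, so it cannot give the coefficientwise identity you need.
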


The following theorem is a main result of this section.

\begin{theorem}\label{lem:constant-tail-border}
If the power series $F(x)$ and $G(x)$ satisfy
\begin{align}\label{Equation-GX-Contine}
G(x)=\frac{1}{1-ux-vx^2G(x)},\qquad F(x)=1+\lambda xG(x),
\end{align}
then, for \(n\geq0\), we have
\begin{align*}
H_{n+1}(F)=\lambda^n v^{\binom{n}{2}}[x^n]\frac{1-\lambda x}{1-ux+vx^2}.
\end{align*}
\end{theorem}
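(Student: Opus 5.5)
The plan is to expand the $(n+1)\times(n+1)$ Hankel matrix of $F$ linearly in its $(0,0)$ entry. This reduces $H_{n+1}(F)$ to two Hankel-type determinants built from the moments of $G$, and each of those can be evaluated with the orthogonal polynomials attached to the constant J-fraction of $G$. Write $G(x)=\sum_{k\ge0}g_kx^k$. By \eqref{Equation-GX-Contine} and \cref{Lemma-Orthogonal-L}, $g_k=L(x^k)$ for the functional $L$ whose monic orthogonal polynomials satisfy
\[
p_{k+1}(x)=(x-u)p_k(x)-vp_{k-1}(x),\qquad p_0=1,\ p_1=x-u .
\]
\cref{lem:jacobi-product} gives $H_n(G)=v^{\binom n2}$. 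Put $q_n:=[x^n]\frac{1}{1-ux+vx^2}$. Evaluating the recurrence at $x=0$ shows $\sum_n p_n(0)x^n=1/(1+ux+vx^2)$, hence $q_n=(-1)^np_n(0)$. The target identity is therefore
\[
H_{n+1}(F)=\lambda^nv^{\binom n2}\,(q_n-\lambda q_{n-1}).
\]

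\emph{Step 1 (splitting).} The moments of $F$ are $\mu_0=1$ and $\mu_k=\lambda g_{k-1}$ for $k\ge1$. Pull one factor $\lambda$ out of every row. The resulting matrix $N$ has $N_{00}=1/\lambda$ and $N_{ij}=g_{i+j-1}$ otherwise, so $H_{n+1}(F)=\lambda^{n+1}\det N$. Since $\det N$ is linear in the $(0,0)$ entry,
\[
H_{n+1}(F)=\lambda^{n}\det\bigl(g_{i+j+1}\bigr)_{0\le i,j<n}+\lambda^{n+1}H_{n+1}(xG).
\]
Here $H_{n+1}(xG)$ is the Hankel determinant of the sequence $(0,g_0,g_1,\dots)$, i.e.\ the matrix $N$ with $0$ in its corner. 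As a sanity check, for $n=1$ this gives $\lambda u-\lambda^2$, which agrees with the claim.

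\emph{Step 2 (first term).} I will use the classical fact that the Hankel determinant of the shifted moments $L(x^{i+j+1})$ equals $(-1)^np_n(0)\,H_n(G)$. This follows from column operations replacing $x^j$ by $p_j$ and then using orthogonality, or equivalently from Heine's formula. It gives $\det(g_{i+j+1})_{0\le i,j<n}=v^{\binom n2}q_n$, which is exactly the first term of the target.

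\emph{Step 3 (second term, the main obstacle).} It remains to show $H_{n+1}(xG)=-v^{\binom n2}q_{n-1}$. I would do this in the same way as Step 2. Perform the column operations $x^j\mapsto p_j$ on columns $1,\dots,n$ of the matrix $(L(x^{i+j-1}))$, whose corner entry is the formal value $0$. Then use orthogonality to bring it to block form, with an $n\times n$ nearly triangular piece bordered by the first row $(0,1,g_1,\dots)$. After expanding along that row, what remains is a shifted Hankel determinant of size $n-1$, which Step 2 evaluates as $v^{\binom{n-1}2}q_{n-1}$, times the leftover diagonal factors $h_1\cdots=v^{\,n-1}$. This gives $-v^{\binom n2}q_{n-1}$. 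For $n=1$ one sees directly that $\det\begin{pmatrix}0&1\\1&u\end{pmatrix}=-1$.

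If the bookkeeping here gets messy, my fallback is to prove this identity by induction on $n$. I would check that both sides satisfy the three-term recurrence $X_{n+1}=uX_n-vX_{n-1}$, up to the factor $v^{\binom n2}$. I would obtain that recurrence either from Desnanot--Jacobi (condensation) applied to the pair $H_{n+1}(xG)$, $\det(g_{i+j+1})$ together with $H_n(G)$, or by writing $xG$ via \cref{lem:jacobi-step} as a continued fraction. Combining Steps 2 and 3 with Step 1 yields the stated formula.
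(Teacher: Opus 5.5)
\textbf{Verdict: there is a gap in Step~3.} Your Steps~1 and~2 are sound. Step~1 is the same split the paper makes: after conjugating the bordered matrix by $1\oplus C$, the paper gets $\lambda^{-1}\det T-\det T^{(1)}$. These two terms are your $\lambda^{-1}\det(g_{i+j+1})_{0\le i,j\le n-1}$ and $H_{n+1}(xG)$. Evaluating the first term by the Heine/Christoffel formula, $\det(g_{i+j+1})_{0\le i,j\le n-1}=(-1)^np_n(0)H_n(G)$ with $q_n=(-1)^np_n(0)$, is a legitimate substitute for the paper's factorization $T=\mathrm{diag}(1,v,\dots,v^{n-1})J_n$. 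Both routes need $v\ne0$ first and then specialization.

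Step~3 is the only nontrivial part left, and it is not established. Carry out the operations you describe: replace column $j\ge1$, i.e.\ $(L(x^i\cdot x^{j-1}))_i$, by $(L(x^ip_{j-1}))_i$. The first row becomes $(0,1,0,\dots,0)$, not $(0,1,g_1,\dots)$. Its cofactor is an $n\times n$ lower Hessenberg matrix with first column $(g_0,\dots,g_{n-1})^T$ and other entries $L(x^ip_k)$. No Hankel determinant of size $n-1$ appears.

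More fundamentally, orthogonality alone cannot give $H_{n+1}(xG)=-v^{n-1}\det(g_{i+j+1})_{0\le i,j\le n-2}$. For a general J-fraction $1/G=1-u_0x-v_1x^2G^{(1)}$, the true identity is $H_{n+1}(xG)=-v_1^{\,n-1}\det(g^{(1)}_{i+j+1})_{0\le i,j\le n-2}$, which involves the \emph{tail} series $G^{(1)}$. Equivalently, it involves the associated polynomial $p^{(1)}_{n-1}(0)$, not $p_{n-1}(0)$. Your sketch never uses that the tail of $G$ is $G$ itself, and that is the missing ingredient. The exponent count $v^{\binom{n-1}2}\cdot v^{n-1}=v^{\binom n2}$ only shows consistency with the target.

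\textbf{Repair via the kernel congruence.} Use the same kernel trick as in \cref{lem:jacobi-step}. By \cref{prop:unit_congruence} with the unit series $1/G(x)=1-ux-vx^2G(x)$,
\[
\frac{\mathcal K_{xG}(x,y)}{G(x)G(y)}=\frac{x^2/G(y)-y^2/G(x)}{x-y}=x+y-uxy+vx^2y^2\sum_{i,j\ge0}g_{i+j+1}x^iy^j .
\]
Its coefficient matrix is $\begin{pmatrix}0&1\\1&-u\end{pmatrix}\oplus v\,(g_{i+j+1})_{i,j\ge0}$. Hence, for $n\ge1$, $H_{n+1}(xG)=-v^{n-1}\det(g_{i+j+1})_{0\le i,j\le n-2}=-v^{\binom n2}q_{n-1}$ by your Step~2. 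This is exactly the structure you guessed.

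\textbf{Repair via row operations.} Alternatively, also perform the row operations $x^i\mapsto p_i$, making it a congruence as the paper does. The corner-zero matrix then becomes bordered tridiagonal with cofactor $\mathrm{diag}(v,\dots,v^{n-1})J_{n-1}$. Here the constancy of the recurrence is what gives $\det J_{n-1}=q_{n-1}$.

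\textbf{Your Desnanot--Jacobi fallback.} This can also be completed, but it does not produce a linear three-term recurrence. Condensation gives
\[
H_{n+1}(xG)\det(g_{i+j+1})_{0\le i,j\le n-2}=\det(g_{i+j+1})_{0\le i,j\le n-1}\,H_n(xG)-H_n(G)^2 .
\]
Closing the induction then needs the Cassini identity $q_{n-1}^2-q_nq_{n-2}=v^{n-1}$. It also needs a genericity argument to justify dividing by $q_{n-1}$.
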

\begin{proof}
First assume \(\lambda v\ne0\).  The general case then follows by clearing the harmless intermediate denominators and specializing the resulting polynomial identity. Let \(G(x)=\sum_{j\geq 0} g_jx^j\), and let \(A_n=[x^n](1-ux+vx^2)^{-1}\), with \(A_{-1}=0\).
By \cref{lem:jacobi-step}, we have \(H_n(G)=v^{\binom n2}\). 
We now consider the Hankel determinant of $F(x)$ as follows:
\[H_{n+1}(F)=\det\!\begin{pmatrix}
 1 & \lambda g_0 &\cdots & \lambda g_{n-1}\\
 \lambda g_0 & \lambda g_1 & \cdots &\lambda g_n\\
 \vdots & \vdots & & \vdots \\
 \lambda g_{n-1} & \lambda g_n & \cdots & \lambda g_{2n-1}
 \end{pmatrix}=\lambda^{n+1}
\det\!\begin{pmatrix}
 \lambda^{-1}&g_0&\cdots&g_{n-1}\\
 g_0&g_1&\cdots&g_n\\
 \vdots&\vdots&&\vdots\\
 g_{n-1}&g_n&\cdots&g_{2n-1}
 \end{pmatrix}.
\]
For convenience, let $M$ denote the bordered matrix defined as follows:
\begin{align*}
    M = \begin{pmatrix} 
    \lambda^{-1} & g_0 & g_1 & \dots & g_{n-1} \\ 
    g_0 & g_1 & g_2 & \dots & g_n \\ 
    g_1 & g_2 & g_3 & \dots & g_{n+1} \\ 
    \vdots & \vdots & \vdots & \ddots & \vdots \\ 
    g_{n-1} & g_n & g_{n+1} & \dots & g_{2n-1} 
    \end{pmatrix}.
\end{align*}

Define a linear functional $L$ on the space of polynomials $\mathbb{R}[x]$ by its action on the standard basis monomials: $L(x^k) = g_k$. The generating function $G(x)$ takes the form of a Jacobi continued fraction (the left-hand side of \eqref{Equation-GX-Contine}). By \cref{Lemma-Orthogonal-L}, the sequence of moments $(g_k)_{k\geq 0}$ corresponds to a sequence of monic orthogonal polynomials $(P_k(x))_{k \ge 0}$ satisfying the three-term recurrence relation
\begin{align*}
    P_{k+1}(x) = (x-u)P_k(x) - v P_{k-1}(x), \quad \text{for } k \ge 1,
\end{align*}
with initial conditions $P_0(x) = 1$ and $P_1(x) = x - u$. The orthogonality condition gives $L(P_m(x) P_k(x)) = 0$ for $m \neq k$. 

We now prove that $L(P_k^2(x)) = v^k$.
By multiplying the recurrence by $P_{k-1}(x)$ and applying $L$, we obtain
\begin{align*}
   L(P_{k-1}(x)P_{k+1}(x)) =L(P_{k-1}(x) (x-u)P_k(x)) -L(P_{k-1}(x) v P_{k-1}(x))
\end{align*}
By the orthogonality condition, we have $0=L(xP_{k-1}(x)P_k(x))-vL(P_{k-1}(x)^2)$.
Since $P_{k-1}(x)$ is a monic polynomial of degree $k-1$, multiplying it by $x$ yields a monic polynomial of degree $k$.
Thus, $xP_{k-1}(x)$ has the form 
$$xP_{k-1}(x)=P_k(x)+\sum_{i=0}^{k-1}c_i P_i(x)$$
for some constants $c_i$.
Again by orthogonality, we obtain $L(P_k(x)^2)=vL(P_{k-1}^2)$.
It follows by iteration that $L(P_k(x)^2)=v^k$.

Now we compute the determinant $\det(M)$. We perform a change of basis from the standard monomials $\{1, x, \dots, x^{n-1}\}$ to the orthogonal polynomials $\{P_0(x), P_1(x), \dots, P_{n-1}(x)\}$. Let $C$ be the $n \times n$ transition matrix such that $P_{j-1}(x) = \sum_{i=1}^n C_{i,j} x^{i-1}$. Since $P_k(x)$ is monic, $C$ is unit upper-triangular, implying $\det(C) = 1$. We define the $(n+1) \times (n+1)$ block diagonal transformation matrix 
$$\tilde{C} = \left(\begin{matrix} 1 & \mathbf{0} \\ \mathbf{0} & C \end{matrix}\right).$$
Therefore, we have $\det(M) = \det(M')$, where $M' = \tilde{C}^T M \tilde{C}$.
The elements of the transformed matrix $M'$ are computed using the linearity of $L$. 

Case 1: For the top-left entry, it is clear that $M'_{0,0} = \lambda^{-1}$. 

Case 2: For the first row ($j \ge 1$), the entries become $M'_{0,j} = L(1 \cdot P_{j-1}(x))$. By orthogonality with $P_0(x)=1$, we have $M'_{0,1} = g_0 = 1$ and $M'_{0,j} = 0$ for $j \ge 2$. By symmetry, the first column is identical.

Case 3: For the lower-right $n \times n$ block ($i, j \ge 1$), we consider the elements $M'_{i,j}$.
We know that $P_{j-1}(x)=\sum_{s=1}^n C_{s,j}x^{s-1}$ and $P_{i-1}(x)=\sum_{r=1}^n C_{r,i}x^{r-1}$.
By $M_{r,s}=L(x\cdot x^{r-1}\cdot x^{s-1})$, we have
$$M'_{i,j}=\sum_{r=1}^n\sum_{s=1}^n (C^T)_{i,r} M_{r,s} C_{s,j}=\sum_{r=1}^n\sum_{s=1}^n C_{r,i}L(x\cdot x^{r-1}\cdot x^{s-1}) C_{s,j}.$$
According to the linearity of $L$, we obtain
$$M'_{i,j}=L\left( x\cdot \left( \sum_{s=1}^n C_{s,j}x^{s-1}\right)\cdot \left( \sum_{r=1}^n C_{r,i}x^{r-1}\right)\right)
=L(xP_{j-1}(x)P_{i-1}(x)).$$
By the orthogonality relations, $M'_{i,j}$ is non-zero only when $i$ and $j$ are adjacent or equal. 
Using the recurrence $x P_k(x) = P_{k+1}(x) + u P_k(x) + v P_{k-1}(x)$, we derive the following: 
$$M'_{i,i} =uL(P_{i-1}(x)^2)= u v^{i-1}, \quad M'_{i,i+1} =L(P_i(x)^2)= v^i,\quad  \text{and}\quad M'_{i+1,i} =L(P_i(x)^2)= v^i.$$ 

Thus, $M'$ takes the form
\begin{align*}
    M' = \begin{pmatrix} 
    \lambda^{-1} & 1 & 0 & 0 & \dots \\ 
    1 & u & v & 0 & \dots \\ 
    0 & v & uv & v^2 & \dots \\ 
    0 & 0 & v^2 & uv^2 & \dots \\ 
    \vdots & \vdots & \vdots & \vdots & \ddots 
    \end{pmatrix}.
\end{align*}
Expanding $\det(M')$ along its first row yields
\begin{align*}
    \det(M') = \lambda^{-1} \det(T) - \det(T^{(1)}),
\end{align*}
where $T$ is the lower-right $n \times n$ tridiagonal block, and $T^{(1)}$ is the $(n-1) \times (n-1)$ submatrix obtained by removing the first row and column of $T$. 

We can factor $T = D J_n$, where $D = \mathrm{diag}(1, v, \dots, v^{n-1})$ and $J_n$ is the classical $n \times n$ Jacobi matrix associated with the recurrence. That is 
$$J_n = \begin{pmatrix} u & v & 0 & \dots \\ 1 & u & v & \dots \\ 0 & 1 & u & \dots \\ \vdots & \vdots & \vdots & \ddots \end{pmatrix}.$$
Clearly, $\det(D) = v^{\binom{n}{2}} = H_n(G)$. Expanding $\det(J_n)$ along its first row shows it satisfies the recurrence 
$$\det(J_n)=u\det(J_{n-1})-v\det(J_{n-2}),$$
with initial condition $\det(J_1)=u$ and $\det(J_2)=u^2-v$.
By the coefficients $A_n = [x^n](1 - ux + vx^2)^{-1}$, we know that
$$A_n = u A_{n-1} - v A_{n-2},$$
with initial condition $A_1=u$ and $A_2=u^2-v$.
Therefore, we obtain $\det(J_n)=A_n$ and $\det(T) = H_n(G) A_n$.

Similarly, we have $T^{(1)} = D^{(1)} J_{n-1}$, where $D^{(1)} = \mathrm{diag}(v, v^2, \dots, v^{n-1})$. 
The determinant evaluates to $\det(T^{(1)}) = H_n(G) A_{n-1}$. 

Therefore, we obtain
\begin{align*}
\det(M)=\det(M') = H_n(G) \left( \lambda^{-1} A_n - A_{n-1} \right).
\end{align*}
Finally, we get 
\begin{align*}
    H_{n+1}(F) &= \lambda^{n+1} v^{\binom{n}{2}} \left( \lambda^{-1} A_n - A_{n-1} \right) \\
    &= \lambda^n v^{\binom{n}{2}} \left( A_n - \lambda A_{n-1} \right) \\
    &= \lambda^n v^{\binom{n}{2}} [x^n] \frac{1 - \lambda x}{1 - ux + vx^2}.
\end{align*}
This completes the proof.
\end{proof}

\begin{theorem}\label{Theorem-SH-FFN-UV}
The shifted Hankel transform $(H_{n+1}(F))_{n\geq 0}$ of generating function $F(x)$ in \cref{lem:constant-tail-border} forms a $\bigl(u^2v^2,\,v^3(v-u^2)\bigr)$ Somos-4 sequence.
\end{theorem}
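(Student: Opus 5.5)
The plan is to use the closed form of \cref{lem:constant-tail-border} directly: the Somos-4 identity will then reduce to a quadratic identity for a sequence satisfying a two-term linear recurrence. Write $h_n=H_{n+1}(F)$ and $c_n=[x^n]\frac{1-\lambda x}{1-ux+vx^2}=A_n-\lambda A_{n-1}$, so that $h_n=\lambda^n v^{\binom n2}c_n$ for $n\geq0$.

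\emph{Step 1: the linear recurrence for $c_n$.} Since $(1-ux+vx^2)\sum_n c_nx^n=1-\lambda x$, comparing coefficients of $x^k$ for $k\geq2$ gives
\[
c_k=uc_{k-1}-vc_{k-2}\qquad(k\geq2),
\]
with $c_0=1$ and $c_1=u-\lambda$. For $n\geq4$, every index used below ($n-4,\dots,n$) is nonnegative, and the recurrence is applied only at $k=n$ and $k=n-2$, both of which are at least $2$.

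\emph{Step 2: strip off the monomial prefactors.} Put $f(k)=\binom k2$. The powers of $\lambda$ agree in all three terms of the target identity, since $n+(n-4)=(n-1)+(n-3)=2(n-2)$. For the powers of $v$, the second differences of $f$ give
\[
f(n)+f(n-4)-2f(n-2)=4,\qquad f(n-1)+f(n-3)-2f(n-2)=1.
\]
Dividing by $\lambda^{2n-4}v^{2f(n-2)}$, the claim $h_nh_{n-4}=u^2v^2h_{n-1}h_{n-3}+v^3(v-u^2)h_{n-2}^2$ becomes
\[
v^4c_nc_{n-4}=u^2v^3c_{n-1}c_{n-3}+v^3(v-u^2)c_{n-2}^2 .
\]
It therefore suffices to prove the polynomial identity
\[
v\,c_nc_{n-4}=u^2c_{n-1}c_{n-3}+(v-u^2)c_{n-2}^2\qquad(n\geq4),
\]
after which multiplying back by $\lambda^{2n-4}v^{2f(n-2)+3}$ gives the result. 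No division is needed in this direction.

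\emph{Step 3: verify the reduced identity.} Substitute $c_n=uc_{n-1}-vc_{n-2}$, and use the recurrence at $k=n-2$ in the form $vc_{n-4}=uc_{n-3}-c_{n-2}$. Then
\[
v\,c_nc_{n-4}=(uc_{n-1}-vc_{n-2})(uc_{n-3}-c_{n-2})
=u^2c_{n-1}c_{n-3}-uc_{n-2}\,(c_{n-1}+vc_{n-3})+vc_{n-2}^2 .
\]
By the recurrence at $k=n-1$, $c_{n-1}+vc_{n-3}=uc_{n-2}$, so the middle term equals $-u^2c_{n-2}^2$. This yields exactly $u^2c_{n-1}c_{n-3}+(v-u^2)c_{n-2}^2$. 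This is a polynomial identity in $u,v,\lambda$ valid for all parameter values.

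\emph{Main obstacle.} The only delicate point is the range of validity of \cref{lem:constant-tail-border} when $\lambda v=0$. That proof handles this case by specialization, so the closed form holds identically in $\lambda,u,v$. Since the argument above uses only polynomial manipulations (the step $vc_{n-4}=uc_{n-3}-c_{n-2}$ is itself polynomial, requiring no inversion of $v$), no degenerate cases arise and the $\bigl(u^2v^2,\,v^3(v-u^2)\bigr)$ Somos-4 identity holds for all $n\geq4$.
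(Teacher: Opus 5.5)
Your proof is correct, and it takes a genuinely different route from the paper.

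The paper does not use the closed form of \cref{lem:constant-tail-border} here. It performs one Jacobi step $1/F=1-\lambda x-x^2Q$, so that $H_{n+1}(F)=H_n(Q)$. It then checks that $Q$ has the Wang--Zhang form with $a_0=\lambda(u-\lambda)$, $b_0=\lambda(\lambda^2-u\lambda+v)$, $c_0=-u$, $d_0=2(u\lambda-\lambda^2-v)$, $f_0=u-\lambda-v/\lambda$, and reads off $\alpha,\beta$ from \cref{thm:wz}.

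You instead substitute the evaluation $h_n=\lambda^n v^{\binom n2}c_n$ into the identity and strip the monomial factors using the second differences of $\binom n2$. This reduces the claim to $v\,c_nc_{n-4}=u^2c_{n-1}c_{n-3}+(v-u^2)c_{n-2}^2$, which holds for every solution of $c_k=uc_{k-1}-vc_{k-2}$. Your algebra checks out.

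What your route buys:
\begin{itemize}
\item It is entirely polynomial. The paper's Wang--Zhang data involve $v/\lambda$, and $a_0$ vanishes at $\lambda=u$, so the paper needs the rational-function-field specialization; you do not.
\item It explains why the parameters are independent of $\lambda$: $\lambda$ enters only through the initial value $c_1=u-\lambda$ and the geometric factor $\lambda^n$.
\end{itemize}

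What the paper's route buys: it needs no explicit evaluation, and it showcases the Wang--Zhang machinery that the rest of the paper relies on. Your argument instead leans on \cref{lem:constant-tail-border} holding for all $(u,v,\lambda)$. That is legitimate, since both sides there are polynomials in $u,v,\lambda$ and the lemma's proof handles the degenerate cases by specialization.

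One small slip: in Step 1 you say the recurrence is used only at $k=n$ and $k=n-2$, but Step 3 also uses it at $k=n-1$. This is harmless, since $n-1\geq 3$.
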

\begin{proof}
Define \(Q(x)\) by
\begin{align*}
 \frac{1}{F(x)}=1-\lambda x-x^2Q(x).
\end{align*}
By \cref{lem:jacobi-step}, we have \(H_{n+1}(F)=H_n(Q)\).
By \eqref{Equation-GX-Contine}, we obtain
\begin{align}\label{eq:2021-second-Q}
Q(x)=\frac{(u\lambda-\lambda^2)+(\lambda^3-u\lambda^2+v\lambda)x}{1-ux+(2u\lambda-2\lambda^2-2v)x^2+x^2(-1+(u-\lambda
-\frac{v}{\lambda})x)Q(x)}.
\end{align}
Equation \eqref{eq:2021-second-Q} satisfies the Wang--Zhang condition with parameters
\[a_0=u\lambda-\lambda^2,\quad b_0=\lambda^3-u\lambda^2+v\lambda,\quad c_0=-u,\quad d_0=2(u\lambda-\lambda^2-v),\quad f_0=u-\lambda-\frac{v}{\lambda}.
\]
They give
\[f_1=\frac{\lambda^2-u\lambda+v}{\lambda-u},\qquad a_1=\frac{v(u^2-u\lambda-v)}{(u-\lambda)^2},\qquad c_0+f_0+f_1=\frac{vu}{\lambda(\lambda-u)}.
\]
By \cref{thm:wz}, the sequence \((H_{n+1}(F))_{n\geq0}\) is a $\bigl(u^2v^2,\,v^3(v-u^2)\bigr)$ Somos-4 sequence.
We compute the initial values directly from the definition $F(x)=1+\lambda xG(x)$ and the expansion of $G(x)$:
\begin{align*}
H_0(F)&=H_1(F)=1,\quad H_2(F)=\lambda(u-\lambda),\quad  H_3(F)=-\lambda^2 v(\lambda u-u^2+v), 
\\ H_4(F)&=-\lambda^3v^3(\lambda u^2-u^3-\lambda v+2uv).
\end{align*}
This completes the proof.
\end{proof}

\subsection{Second and third order recurrences}

We define the parameter sequence $a_n(p,s,t)$ by the second-order recurrence 
$$a_n=sa_{n-1}+t\sum_{k=0}^{n-3}a_{k+1}a_{n-k-2},\quad (n\geq 2)$$
with the initial conditions $a_0=1$ and $a_1=p$.
When $(p,s,t)=(1,1,1)$, the sequence $a_{n+1}(1,1,1)$ are the Motzkin numbers.
When $(p,s,t)=(1,2,1)$, the sequence $a_n(1,2,1)$ are the Catalan numbers.
When $(p,s,t)=(1,1,2)$, the sequence $a_n(1,1,2)$ is the sequence [A025235] on OEIS. This sequence counts Motzkin paths with the up-steps in two colors.

Let \(U(x)=\sum_{n\geq0}a_n(p,s,t)x^n\) be the generating function of $a_n(p,s,t)$.
Its defining recurrence is equivalent to
\begin{equation}\label{eq:2021-second-U}
 U(x)=1+px+sx(U(x)-1)+tx(U(x)-1)^2.
\end{equation}

\begin{theorem}{\em (Conjectured in \cite[Conjecture 14]{Barry2021Catalan})}\label{thm:2021-c14}
The shifted Hankel determinant \(H_{n+1}(U)\) is given by 
\begin{align*}
t^{\binom n2}p^{\binom{n+1}2}[x^n]\frac{1-px}{1-sx+ptx^2}.
\end{align*}
\end{theorem}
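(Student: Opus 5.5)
The plan is to put $U(x)$ into the form \eqref{Equation-GX-Contine} required by \cref{lem:constant-tail-border} and then read off the formula. No new determinant evaluation should be needed; the work is an algebraic change of variables in \eqref{eq:2021-second-U} plus matching the exponents.

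\textbf{Step 1: factor out the first-order part.} Since $U(0)=1$, write $U(x)-1=xV(x)$ with $V\in\K[[x]]$. Substituting into \eqref{eq:2021-second-U} and dividing by $x$ gives
\[
V=p+sxV+tx^2V^2 .
\]
This determines a unique power series with $V(0)=p$.

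\textbf{Step 2: normalize to constant term $1$.} Set $V=pG$, working over $\mathbb Q(p,s,t)$ so that division by $p$ is harmless. Then
\[
G=1+sxG+ptx^2G^2,
\]
which is equivalent to $G\,(1-sx-ptx^2G)=1$, that is,
\[
G(x)=\frac{1}{1-sx-ptx^2G(x)} .
\]
Moreover $U(x)=1+pxG(x)$. This is exactly \eqref{Equation-GX-Contine} with
\[
\lambda=p,\qquad u=s,\qquad v=pt .
\]
The series $G$ with $G(0)=1$ is unique, so it is the series to which \cref{lem:constant-tail-border} applies.

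\textbf{Step 3: apply \cref{lem:constant-tail-border} and simplify.} The theorem gives
\[
H_{n+1}(U)=p^n(pt)^{\binom n2}\,[x^n]\frac{1-px}{1-sx+ptx^2}.
\]
Since $p^n\,p^{\binom n2}=p^{\binom{n+1}2}$, this is the claimed expression $t^{\binom n2}p^{\binom{n+1}2}[x^n]\frac{1-px}{1-sx+ptx^2}$.

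\textbf{Step 4: degenerate parameters.} The only delicate point is the specialization to values such as $p=0$ or $pt=0$, where the normalization $V=pG$ and the hypothesis $\lambda v\neq0$ in the proof of \cref{lem:constant-tail-border} fail. To handle it, I would note two facts:
\begin{itemize}
\item both sides are polynomials in $p,s,t$, because the coefficients $a_n(p,s,t)$ are polynomials by the recurrence and the right-hand side is visibly polynomial;
\item the identity holds in $\mathbb Q(p,s,t)$ by Steps 1--3.
\end{itemize}
Hence it holds identically as a polynomial identity, and in particular at every specialization. This is the same specialization principle used in \cref{rem:specialization} and in the proof of \cref{lem:constant-tail-border}.
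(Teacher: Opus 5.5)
Your proposal is correct and follows the paper's proof. The paper also sets $G(x)=(U(x)-1)/(px)$, observes $G=1/(1-sx-ptx^2G)$, applies \cref{lem:constant-tail-border} with $(u,v,\lambda)=(s,pt,p)$, and uses $p^n(pt)^{\binom n2}=t^{\binom n2}p^{\binom{n+1}2}$; your Step~4 only spells out the polynomial-specialization argument for $p=0$ or $pt=0$, which the paper leaves implicit in the proof of \cref{lem:constant-tail-border}.
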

\begin{proof}
Writing $G(x)=\frac{U(x)-1}{px}$,
we obtain 
$$G(x)=\frac{1}{1-sx-ptx^2G(x)}.$$
Apply \cref{lem:constant-tail-border} with \((u,v,\lambda)=(s,pt,p)\), and note that \(p^n(pt)^{\binom n2}=t^{\binom n2}p^{\binom{n+1}2}\). This completes the proof.
\end{proof}

\begin{theorem}{\em (Conjectured in \cite[Conjecture 15]{Barry2021Catalan})} \label{thm:2021-c15}
The Hankel transform of generating function $U(x)$ forms a $\bigl((pst)^2,\,(pt)^3(pt-s^2)\bigr)$ Somos-4 sequence.
\end{theorem}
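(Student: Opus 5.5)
The plan is to reduce the statement to \cref{Theorem-SH-FFN-UV}, exactly as \cref{thm:2021-c14} was reduced to \cref{lem:constant-tail-border}. As in the proof of \cref{thm:2021-c14}, set $G(x)=(U(x)-1)/(px)$. From \eqref{eq:2021-second-U} this gives
\[
G(x)=\frac{1}{1-sx-ptx^2G(x)},\qquad U(x)=1+pxG(x).
\]
So $U$ is the series $F$ of \cref{lem:constant-tail-border} with $(u,v,\lambda)=(s,pt,p)$. Substituting these values into \cref{Theorem-SH-FFN-UV}, the shifted sequence $(H_{n+1}(U))_{n\ge0}$ is a Somos-4 sequence with parameters
\[
\bigl(u^2v^2,\,v^3(v-u^2)\bigr)=\bigl((pst)^2,\,(pt)^3(pt-s^2)\bigr),
\]
which are exactly the claimed ones. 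As elsewhere in the paper, I work over $\mathbb Q(p,s,t)$ and invoke \cref{rem:specialization} and \cref{prop:wz-universal} for degenerate values such as $p=0$ or $p=s$. This is legitimate because the underlying Wang--Zhang application is a polynomial identity in the parameters.

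The shifted result gives $H_nH_{n-4}=\alpha H_{n-1}H_{n-3}+\beta H_{n-2}^2$ for all $n\ge5$ (in the unshifted indexing of $H_n(U)$). The remaining step is to include $H_0(U)$, i.e.\ to check the single identity
\[
H_4(U)H_0(U)=\alpha H_3(U)H_1(U)+\beta H_2(U)^2 .
\]
Here $H_0=H_1=1$. The values $H_2,H_3,H_4$ come from \cref{thm:2021-c14}, or equivalently from the list in the proof of \cref{Theorem-SH-FFN-UV}:
\[
H_2=p(s-p),\qquad H_3=-p^3t(ps-s^2+pt),\qquad H_4=-p^6t^3(ps^2-s^3-p^2t+2pst).
\]
The identity is then a direct polynomial check in $p,s,t$. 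It is cleanest to do this check for general $(u,v,\lambda)$, using $H_{n+1}=\lambda^n v^{\binom n2}(A_n-\lambda A_{n-1})$ with $A_n=[x^n](1-ux+vx^2)^{-1}$, and then specialize.

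An alternative route to the same boundary case: a Somos-4 sequence with $S_1\neq0$ can be extended one step backwards by $S_{-1}=(\alpha S_2S_0+\beta S_1^2)/S_3$. One can then check that this back-extended value equals $1=H_0$. Either way, the only real work, and the main (mild) obstacle, is this one boundary verification. It is needed because \cref{Theorem-SH-FFN-UV} only covers the shift. I would carry out the expansion symbolically, grouping terms by powers of $\lambda$, to confirm that both sides agree.
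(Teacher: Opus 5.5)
Your proposal is correct and is the paper's proof: the paper simply specializes \cref{Theorem-SH-FFN-UV} at $(u,v,\lambda)=(s,pt,p)$. Your explicit check of the $n=4$ identity $H_4=\alpha H_3+\beta H_2^2$ (I confirm it holds identically in $u,v,\lambda$) usefully fills in the passage from the shifted to the unshifted sequence, which the paper leaves implicit in its list of initial values. One small correction to your alternative back-extension route: the division there is by $S_3$, so the needed hypothesis is $S_3\neq0$ rather than $S_1\neq0$.
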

\begin{proof}
This result follows from \cref{Theorem-SH-FFN-UV} with $(u,v,\lambda)=(s,pt,p)$.
\end{proof}

When $(p,s,t)=(1,-1,-1)$, the Hankel transform of $a_n(1,-1,-1)$ is the signed Fibonacci sequence:
$$1,1,-2,-3,5,8,-13,\ldots$$
with general term $(-1)^{\binom{n}{2}}F_{n+1}$.
Here $F_n$ is the Fibonacci sequence, see [A000045] on OEIS~\cite{Sloane23}.

We define the parameter sequence $a_n(p,q,r,s,t)$ by the third-order recurrence 
$$a_n=ra_{n-1}+sa_{n-2}+t\sum_{k=1}^{n-3}a_{k}a_{n-k-2},\quad (n\geq 3)$$
with the initial conditions $a_0=1$, $a_1=p$, and $a_2=q$.
The sequence $a_n(1,2,2,1,1)$ is [A086581] on OEIS, which counts the number of Dyck paths of semi-length $n$ that avoid DDUU.
The sequence $a_n(1,2,1,2,1)$ coincides with the Motzkin numbers.

Let \(U(x)=\sum_{n\geq0}a_n(p,q,r,s,t)x^n\) denote the generating function of $a_n(p,q,r,s,t)$.

\begin{theorem}{\em (Conjectured in \cite[Conjecture 24]{Barry2021Catalan})}\label{thm:2021-c24}
Let \(G(x)=(U(x)-1)/(px)\) and $p\neq 0$. The shifted Hankel transform \((H_{n+1}(G))_{n\geq 0}\) of \((a_{n+1}/p)_{n\geq0}\) forms a
\[\bigl((pt)^2,\,-t^2(p^2s+pqr-q^2)\bigr)
\]
Somos-4 sequence.
\end{theorem}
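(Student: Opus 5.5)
The plan is to put $G(x)$ into the form of \cref{lem:sx} and perform one Sulanke--Xin step, which turns the shifted Hankel sequence into the Hankel sequence of a series of Wang--Zhang type. Then \cref{prop:wz-universal} gives the parameters directly.

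\emph{Step 1: functional equation for $G$.} Write $V(x)=U(x)-1=\sum_{k\ge1}a_kx^k$. Since $\sum_{k=1}^{n-3}a_ka_{n-2-k}=[x^{n-2}]V(x)^2$, the recurrence for $n\ge3$ sums to
\[
U-1-px-qx^2=rx(U-1-px)+sx^2(U-1)+tx^2V^2 .
\]
Substituting $V=pxG$ and dividing by $px$ (allowed since $p\ne0$) gives
\[
G(x)=\frac{1+bx}{1-rx-sx^2-ptx^3G(x)},\qquad b=\frac qp-r .
\]

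\emph{Step 2: one Sulanke--Xin step.} Apply \cref{lem:sx} with $(a,b,c,d,e,f)=(1,\,q/p-r,\,-r,\,-s,\,0,\,-pt)$. This gives $H_{n+1}(G)=H_n(\widetilde G)$, where $\widetilde G$ satisfies \eqref{eq:wz-form} with
\begin{align*}
a_0&=s-br-b^2,\qquad c_0=-r,\qquad d_0=s-2br-2b^2,\qquad f_0=-b,\\
b_0&=pt-rs+r^2b+2rb^2-sb+b^3 .
\end{align*}

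\emph{Step 3: apply \cref{prop:wz-universal}.} First compute $\Delta=a_0(c_0+f_0)-b_0$. Expanding $-(r+b)(s-br-b^2)$ cancels every term of $b_0$ except $pt$, so $\Delta=-pt$ and hence $\alpha=\Delta^2=(pt)^2$.

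Next compute $\beta=(a_0-d_0-c_0f_0-f_0^2)\Gamma-a_0\Delta^2$, where $\Gamma=a_0^3-a_0^2d_0+a_0b_0c_0-b_0^2$. Here $a_0-d_0-c_0f_0-f_0^2=br$ simplifies nicely. One then expands $br\,\Gamma-(s-br-b^2)p^2t^2$, substitutes $b=(q-pr)/p$, and checks that everything collapses to $-t^2(p^2s+pqr-q^2)$.

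A useful sanity check is to write $\Gamma$ in terms of $b$ and $pt$ first, since many $s$- and $r$-terms should cancel in the same way they did for $\Delta$. I expect this simplification of $\beta$ to be the only laborious step and the main obstacle. If it resists a clean hand computation, I would verify it on the initial values $H_1,\dots,H_5$ of $G$ computed symbolically, or by computer algebra.

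\emph{Step 4: conclusion and degenerate cases.} Since $(H_n(\widetilde G))_{n\ge0}$ is a $\bigl((pt)^2,-t^2(p^2s+pqr-q^2)\bigr)$ Somos-4 sequence, so is $(H_{n+1}(G))_{n\ge0}$. \cref{prop:wz-universal} imposes no nonvanishing hypothesis, so specializations such as $a_0=0$ are covered automatically. The only division used is by $p$, and that is excluded by hypothesis.
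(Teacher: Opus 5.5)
Your route is the same as the paper's: sum the recurrence, take one Sulanke--Xin step, then read off the Wang--Zhang parameters. Steps~1--2 and $\Delta=-pt$ are correct. With $\Psi=p^2s+pqr-q^2$ and $b=q/p-r$, your data agree with the paper's, e.g.\ $a_0=s-br-b^2=\Psi/p^2$.

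The gap is in Step~3: the factor $a_0-d_0-c_0f_0-f_0^2$ is $0$, not $br$. Indeed $a_0-d_0=br+b^2$, while $c_0f_0=(-r)(-b)=rb$ and $f_0^2=b^2$, so everything cancels; you appear to have dropped the $c_0f_0$ term. With your value the planned simplification cannot succeed. You would need $br\,\Gamma=0$, but $\Gamma=H_2(\widetilde G)=H_3(G)$ is not identically zero. For example, at $(p,q,r,s,t)=(1,2,1,0,1)$ one has $b=1$, $a_0=-2$, $\Gamma=-7$, and the shifted determinants $H_{n+1}(G)$ begin $1,-2,-7,1,96$. Your formula gives $\beta=br\,\Gamma-a_0\Delta^2=-5$, whereas $96=1\cdot1\cdot(-2)+49\beta$ forces $\beta=2=-t^2\Psi$.

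Once the factor is corrected, $\beta=-a_0\Delta^2=-p^2t^2(s-br-b^2)=-t^2\Psi$ immediately, with no expansion of $\Gamma$ at all. This vanishing is exactly the paper's observation $a_1=(f_0-f_1)(c_0+f_0+f_1)$, which kills the second term of \eqref{eq:wz-beta}. So the ``main obstacle'' you anticipate is an artifact of the slip. With that fix the argument is complete. Invoking \cref{prop:wz-universal} directly, rather than \cref{thm:wz} (whose $f_1=-b_0/a_0$ presupposes $\Psi\neq0$), also covers $\Psi=0$ without a separate specialization remark.
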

\begin{proof}
By the recurrence and the initial conditions of $a_n(p,q,r,s,t)$, we have
\begin{equation}\label{eq:2021-third-U}
 U(x)=1+px+qx^2+rx(U(x)-1-px)+sx^2(U(x)-1)+tx^2(U(x)-1)^2.
\end{equation}
Assume \(p\ne0\), as is required by the normalized sequence \((a_{n+1}/p)\). 
For \(G(x)=(U(x)-1)/(px)\), this becomes
\begin{equation}\label{eq:2021-third-G}
 G(x)=\frac{1+(q/p-r)x}{1-rx-sx^2-ptx^3G(x)}.
\end{equation}
Apply \cref{lem:sx} to
\eqref{eq:2021-third-G}.  The transformed series \(Q(x)\), for which \(H_{n+1}(G)=H_n(Q)\), is given by
\begin{align*}
Q(x)=\frac{\frac{sp^2+pqr-q^2}{p^2}+\frac{p^4t-p^2qs-pq^2r+q^3}{p^3}x}{1-rx+\frac{sp^2+2pqr-2q^2}{p^2}x^2
+x^2(-1-(\frac{q}{p}-r)x)Q(x)}.
\end{align*}
Set \(\Psi=p^2s+pqr-q^2\). Thus $Q(x)$ satisfies the Wang--Zhang condition with
\[ a_0=\frac{\Psi}{p^2},\quad b_0=pt-\frac{q\Psi}{p^3},\quad c_0=-r,\quad
 d_0=\frac{2\Psi}{p^2}-s,\quad f_0=r-\frac qp.
\]
Writing \(f_1=-b_0/a_0\), direct simplification gives
\[ c_0+f_0+f_1=-\frac{p^3t}{\Psi},\qquad
 a_1=(f_0-f_1)(c_0+f_0+f_1).
\]
Consequently the second term in \eqref{eq:wz-beta} vanishes, and \cref{thm:wz} gives \(\alpha=p^2t^2\) and \(\beta=-t^2\Psi\).
This completes the proof.
\end{proof}

\begin{theorem}{\em (Conjectured in \cite[Conjecture 25]{Barry2021Catalan})}\label{thm:2021-c25}
For \((r,s,t)=(1,q-p+1,1)\), the Hankel transform of \((a_n(p,q,r,s,t))\), namely \((H_n(U))_{n\geq0}\), is a
\[ \bigl(p^2,\,p^3-pq+q^2-p^2(1+q)\bigr)
\]
Somos-4 sequence.
\end{theorem}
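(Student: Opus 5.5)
The plan is to bypass the shifted-Hankel machinery used for \cref{thm:2021-c24} and put $U(x)$ itself directly into the Wang--Zhang form \eqref{eq:wz-cross-multiplied}. Then \cref{prop:wz-universal} gives the Somos-4 relation for the full sequence $(H_n(U))_{n\geq0}$, including the term $H_0(U)=1$, with no index shift and no nonvanishing hypotheses.

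\emph{Step 1: rewrite the functional equation.} Specialize \eqref{eq:2021-third-U} to $(r,s,t)=(1,q-p+1,1)$ and put $W=U-1$. It reads
\[
W=px+qx^2+x(W-px)+sx^2W+x^2W^2.
\]
Substitute $W=U-1$, expand, and collect the terms in $U$ and $U^2$. The $U$-linear coefficient is $-1+x+(s-2)x^2$. The constant part is
\[
1+(p-1)x+(q-p+1-s)x^2 .
\]
The key observation is that the hypothesis $s=q-p+1$ kills the $x^2$ term on the right. This leaves
\[
\bigl(1-x+(2-s)x^2\bigr)U(x)+x^2(-1+0\cdot x)U(x)^2=1+(p-1)x .
\]

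\emph{Step 2: read off the parameters.} This is exactly \eqref{eq:wz-cross-multiplied} with
\[
a_0=1,\quad b_0=p-1,\quad c_0=-1,\quad d_0=2-s=1+p-q,\quad f_0=0 .
\]
The constant term $U(0)=1=a_0$ agrees, so uniqueness in \cref{prop:wz-universal} identifies $Q=U$.

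\emph{Step 3: compute $\alpha$ and $\beta$.} First,
\[
\Delta=a_0(c_0+f_0)-b_0=-p,\qquad\text{so}\qquad \alpha=\Delta^2=p^2 .
\]
Next,
\[
\Gamma=a_0^3-a_0^2d_0+a_0b_0c_0-b_0^2=(q-p)-(p-1)-(p-1)^2=q-p^2 .
\]
Since $c_0f_0+f_0^2=0$, formula \eqref{eq:wz-polynomial-parameters} gives
\[
\beta=(a_0-d_0)\Gamma-a_0\Delta^2=(q-p)(q-p^2)-p^2 .
\]
Expanding, $\beta=q^2-p^2q-pq+p^3-p^2=p^3-pq+q^2-p^2(1+q)$, which is the claimed value. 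Then \eqref{eq:wz-universal-somos} yields the statement for all $n\geq4$.

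The only delicate point is Step~1: one must check carefully that the $x^2$ coefficient on the right side vanishes precisely under $s=q-p+1$. This is what places $U$ itself, rather than a transformed series, in Wang--Zhang form. Otherwise one would have to pass through \cref{lem:jacobi-step} or \cref{lem:sx} and separately verify the lowest-index relation involving $H_0(U)$. Because \cref{prop:wz-universal} is stated over an arbitrary commutative ring, no case distinction in $p$ or $q$ is needed.
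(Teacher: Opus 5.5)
Your proof is correct, and it takes a more direct route than the paper. The paper never observes that $U$ itself has the form \eqref{eq:wz-cross-multiplied}. Instead it proceeds in three steps:
\begin{enumerate}
\item It performs the Jacobi step $1/U(x)=1-px-x^2Q(x)$ of \cref{lem:jacobi-step}, obtaining $Q$ with $(a_0,b_0,c_0,d_0,f_0)=(q-p^2,\,p(p^2-q+1),\,-1,\,-2p^2+p+q-1,\,1-p)$.
\item It applies \cref{thm:wz} to get the relation for $(H_{n+1}(U))_{n\geq0}$, which gives the relation for $H_n(U)$ only when $n\geq5$.
\item It asserts, without displaying the computation, that the $n=4$ identity follows by direct expansion.
\end{enumerate}
That $Q$ is precisely the image of $U$, with your parameters $(1,p-1,-1,1+p-q,0)$, under \cref{lem:sx} with $a=1$. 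So both arguments run along the same orbit \eqref{eq:wz-xin-orbit}, yours starting one step earlier. The parameters agree because $\alpha$ and $\beta$ are conserved along that orbit.

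Starting at $U$ gains you two things. First, you get the whole range $n\geq4$ at once, with $H_0(U)=1$ built in, so no boundary check is needed. Second, your leading coefficient $a_0=1$ is a unit, whereas the paper's $a_0=q-p^2$ may vanish and is covered only through the specialization principle.

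Your computation also explains Barry's hypothesis. For general $(r,s,t)$, the right-hand side of the $U$-equation is $1+(p-r)x+(q-rp-s+t)x^2$. With $r=t=1$, the condition $s=q-p+1$ is exactly what makes it linear.

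The paper's template (normalize first, then apply Wang--Zhang) is what one needs when the series is not already of the form \eqref{eq:wz-cross-multiplied}, as in \cref{thm:2021-c24}. Here it only adds a step.
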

\begin{proof}
Set \((r,s,t)=(1,q-p+1,1)\) in \eqref{eq:2021-third-U}, and define \(Q(x)\) by \(1/U(x)=1-px-x^2Q(x)\).
Then we obtain
\begin{equation}\label{eq:2021-c25-Q}
 Q(x)=\frac{q-p^2+p(p^2-q+1)x}{1-x+(-2p^2+p+q-1)x^2+x^2(-1+(1-p)x)Q(x)}.
\end{equation}
By \cref{lem:jacobi-step}, we have \(H_{n+1}(U)=H_n(Q)\).  In \eqref{eq:2021-c25-Q}, the $Q(x)$ satisfies the Wang--Zhang condition with
\[ a_0=q-p^2,\quad b_0=p(p^2-q+1),\quad c_0=-1,\quad d_0=-2p^2+p+q-1,\quad f_0=1-p.
\]
Insertion in \cref{thm:wz} gives exactly the displayed parameters for \((H_{n+1}(U))_{n\geq0}\), and hence proves the required identities for \((H_n(U))_{n\geq0}\) at \(n\geq5\).  Direct expansion of \eqref{eq:2021-third-U} verifies the remaining identity at \(n=4\). This completes the proof.
\end{proof}

\section{Elliptic parameter families}\label{Section-Elliptic-PF}

The generating functions mentioned in the following two conjectures are closely related to the family of elliptic curves 
$$E_t: y^2+4xy+y=x^3+(t-1)x^2+tx;$$
see \cite[Section 8]{Barry2021Catalan} for related material.

\begin{theorem}{\em (Conjectured in \cite[Conjecture 43]{Barry2021Catalan})} \label{thm:2021-c43}
Let $G(x)$ be the generating function 
$$G(x)=\frac{1+3t+t^2-x}{1+2(t+2)x}\mathcal{C}\left(\frac{x^2(1+3t+t^2-x)}{(1+2(t+2)x)^2}\right).$$
Then the Hankel transform of \(G(x)\) is an \((\alpha,\beta)\) Somos-4 sequence, where
\begin{align*}
\alpha&=(2t^3+10t^2+14t+5)^2,\\
\beta&=-3t^8-40t^7-222t^6-666t^5-1173t^4-1230t^3-740t^2-232t-29.
\end{align*}
\end{theorem}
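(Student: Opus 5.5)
The plan is to show that $G(x)$ satisfies a quadratic equation of exactly the shape \eqref{eq:wz-cross-multiplied} with $d_0=f_0=0$, and then read off $\alpha$ and $\beta$ from \cref{prop:wz-universal}. Working over $\mathbb Q(t)$ (or $\mathcal A=\mathbb Z[t]$) means no nonvanishing hypotheses are needed.

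\emph{Step 1: the functional equation.} Write $k=1+3t+t^2$, $m=2(t+2)$, $w=(k-x)/(1+mx)$ and $z=x^2(k-x)/(1+mx)^2$. Then $G=w\,\mathcal C(z)$. Substituting $\mathcal C(z)=G/w$ into $\mathcal C=1+z\mathcal C^2$ gives $G=w+zG^2/w$. Since $z/w=x^2/(1+mx)$, this is $G=w+x^2G^2/(1+mx)$. Clearing denominators gives
\[
(1+mx)G(x)-x^2G(x)^2=k-x,\qquad\text{that is,}\qquad G(x)=\frac{k-x}{1+mx-x^2G(x)}.
\]
This is \eqref{eq:wz-cross-multiplied} with $(a_0,b_0,c_0,d_0,f_0)=(k,-1,m,0,0)$. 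Note that $G(0)=k$ is the correct branch, since $\mathcal C(0)=1$, so by the uniqueness part of \cref{prop:wz-universal} the series $G$ is the series $Q$ there.

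\emph{Step 2: the parameters.} We have $\Delta=a_0(c_0+f_0)-b_0=km+1$, and
\[
km+1=2(t+2)(t^2+3t+1)+1=2t^3+10t^2+14t+5,
\]
so $\alpha=\Delta^2$ is exactly the stated value. Next, $\Gamma=a_0^3-a_0^2d_0+a_0b_0c_0-b_0^2=k^3-km-1$, and $a_0-d_0-c_0f_0-f_0^2=k$. Hence
\[
\beta=k\Gamma-k\Delta^2=k\bigl(k^3-km-1-(km+1)^2\bigr).
\]

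\emph{Step 3: polynomial check.} The only remaining work is to expand this expression in $t$ and compare it with the stated degree-$8$ polynomial. As a sanity check, the leading coefficient is $1-4=-3$. The constant term, at $t=0$ where $k=1$ and $m=4$, is $1-4-1-25=-29$. Both agree with the statement.

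This expansion is the only mildly laborious step. There is no conceptual obstacle, because the identity $H_nH_{n-4}=\alpha H_{n-1}H_{n-3}+\beta H_{n-2}^2$ for $n\ge4$ then follows directly from \cref{prop:wz-universal}.
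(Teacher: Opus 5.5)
Your proposal is correct and follows essentially the same route as the paper. Both derive $G(x)=\frac{k-x}{1+mx-x^2G(x)}$ from $\mathcal{C}=1+x\mathcal{C}^2$ and apply the Wang--Zhang condition with $(a_0,b_0,c_0,d_0,f_0)=(k,-1,m,0,0)$, where $k=t^2+3t+1$ and $m=2(t+2)$. The only difference is cosmetic: you use the polynomial form $\beta=k(\Gamma-\Delta^2)$ from \cref{prop:wz-universal} rather than the $f_1,a_1$ formulas of \cref{thm:wz}, and your expression $k^3-km-1-(km+1)^2$ expands to exactly the factor $-(3t^6+31t^5+126t^4+257t^3+276t^2+145t+29)$ that the paper records.
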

\begin{proof}
Let \(m=t^2+3t+1\).
By the Catalan generating function \(\mathcal{C}(x)=1+x\mathcal{C}(x)^2\), we obtain
\begin{align*}
 G(x)=\frac{m-x}{1+2(t+2)x-x^2G(x)}.
\end{align*}
Thus \(G(x)\) satisfies the Wang--Zhang condition with
\[ a_0=m,\quad b_0=-1,\quad c_0=2(t+2),\quad d_0=f_0=0.
\]
The formulas of \cref{thm:wz} give $\alpha=(2(t+2)m+1)^2$ and
\[
 \beta=-m(3t^6+31t^5+126t^4+257t^3+276t^2+145t+29),
\]
which expand to the claimed expressions. This completes the proof.
\end{proof}

\begin{theorem}{\em (Conjectured in \cite[Conjecture 44]{Barry2021Catalan})}\label{thm:2021-c44}
Let $F(x)$ be the generating function 
$$F(x)=\frac{1+(2t+5)x}{1+2(t+3)x+(t+2)(t+3)x^2}\mathcal{C}\left(\frac{x^3(1+(2t+5)x)}{(1+2(t+3)x+(t+2)(t+3)x^2)^2}\right).$$
Then the Hankel sequence \((H_n(F))_{n\geq0}\) is a \((1,t^2+3t+1)\) Somos-4 sequence.
\end{theorem}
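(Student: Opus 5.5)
The plan is to put $F(x)$ into the algebraic form of \cref{lem:sx}, apply that lemma once to reach the Wang--Zhang form, and then read off $(\alpha,\beta)$ from \cref{prop:wz-universal}, as in the proofs of \cref{thm:2021-c43,thm:2022-unnumbered}.

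\emph{Step 1: a functional equation for $F$.} Write
\[
P=1+(2t+5)x,\qquad W=1+2(t+3)x+(t+2)(t+3)x^2 ,
\]
so that $F=(P/W)\,\mathcal C(x^3P/W^2)$. Since $\mathcal C=1+z\mathcal C^2$, the substitution $\mathcal C=WF/P$ gives $WF/P=1+x^3F^2/P$, that is, $WF=P+x^3F^2$. Hence
\[
F(x)=\frac{1+(2t+5)x}{1+2(t+3)x+(t+2)(t+3)x^2+x^2(0-x)F(x)} .
\]
This is the form of \cref{lem:sx} with
\[
(a,b,c,d,e,f)=\bigl(1,\;2t+5,\;2(t+3),\;(t+2)(t+3),\;0,\;-1\bigr).
\]
It is not yet of Wang--Zhang type, because $e=0$ rather than $-1$.

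\emph{Step 2: one application of \cref{lem:sx}.} The lemma gives $H_n(F)=H_{n-1}(G)$ for $n\geq1$, where $G$ satisfies \eqref{eq:wz-form} with
\[
a_0=A=-(d-bc+b^2)=-(t^2+3t+1),\qquad b_0=B,\qquad c_0=2(t+3),\qquad d_0=D,\qquad f_0=-b/a=-(2t+5).
\]
Here $B=-(f+cd-c^2b+2cb^2-bd-b^3)$ and $D=-(d-2bc+2b^2)$ are polynomials in $t$ that I would expand explicitly.

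\emph{Step 3: read off the parameters.} Substitute these values into $\Delta=a_0(c_0+f_0)-b_0$ and $\Gamma=a_0^3-a_0^2d_0+a_0b_0c_0-b_0^2$ from \cref{prop:wz-universal}. I expect $\Delta=\pm1$, which gives $\alpha=1$, and $\beta=(a_0-d_0-c_0f_0-f_0^2)\Gamma-a_0\Delta^2=t^2+3t+1$. Working through \cref{prop:wz-universal}, which holds over the polynomial ring $\mathbb Z[t]$, avoids any nonvanishing hypothesis, although $a_0\neq0$ generically anyway.

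\emph{Step 4: transfer to $F$.} By \cref{lem:somos-invariance}, the identity for $(H_m(G))_{m\ge0}$ gives the $(1,t^2+3t+1)$ Somos-4 identity for $H_n(F)$ whenever all indices are at least $1$, i.e.\ for $n\geq5$. The remaining case $n=4$ involves $H_0(F)=1$. I would check it directly by computing $H_1,\dots,H_4$ of $F$ from the first few coefficients, exactly as was done for \cref{thm:2021-c25}.

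The main obstacle is the polynomial algebra in Step~3: confirming that $\Delta^2$ collapses to $1$ and that $\beta$ simplifies to $t^2+3t+1$. If it does not collapse cleanly, I would first recheck the sign conventions for $e$ and $f$ in \cref{lem:sx}.
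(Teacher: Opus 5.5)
Your proposal is correct and follows the paper's proof essentially step for step: the same functional equation $F=P/(W-x^3F)$, then a single application of \cref{lem:sx} with $(a,b,c,d,e,f)=(1,2t+5,2(t+3),(t+2)(t+3),0,-1)$, which gives $b_0=-t(t+3)$ and $d_0=-t^2-t+4$ alongside your $a_0,c_0,f_0$, then the Wang--Zhang read-off and a direct check of the $n=4$ identity. The algebra you were worried about does collapse: $\Delta=-1$ and $a_0-d_0-c_0f_0-f_0^2=0$, so $\alpha=1$ and $\beta=-a_0=t^2+3t+1$; using the polynomial form of \cref{prop:wz-universal} also spares you the paper's tacit appeal to specialization at $t^2+3t+1=0$.
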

\begin{proof}
Let \(m=t^2+3t+1\).
By the Catalan generating function \(\mathcal{C}(x)=1+x\mathcal{C}(x)^2\), we obtain
\[F(x)=\frac{1+(2t+5)x}{1+2(t+3)x+(t+2)(t+3)x^2-x^3F(x)}.
\]
One application of \cref{lem:sx} gives \(H_{n+1}(F)=H_n(Q)\), where 
\begin{align*}
Q(x)=\frac{(-t^2-3t-1)+(-t^2-3t)x}{1+(2t+6)x+(-t^2-t+4)x^2+x^2(-1-(2t+5)x)Q(x)}.
\end{align*}
Here \(Q(x)\) satisfies the Wang--Zhang condition with
\[a_0=-m,\quad b_0=-t(t+3),\quad c_0=2(t+3),\quad d_0=-t^2-t+4,\quad f_0=-2t-5.
\]
Here
\[ f_1=-\frac{t(t+3)}m,\qquad a_1=-\frac{2t^3+10t^2+14t+5}{m^2},\qquad c_0+f_0+f_1=\frac1m.
\]
Equations \eqref{eq:wz-alpha}--\eqref{eq:wz-beta} simplify to \((\alpha,\beta)=(1,m)\).  This proves the identities for
\((H_n(F))_{n\geq0}\) at every \(n\geq5\); direct expansion of the defining equation verifies the remaining identity at \(n=4\). 
This completes the proof.
\end{proof}

The generating function mentioned in the following conjecture is closely related to the family of elliptic curves \cite[Section 9]{Barry2021Catalan}
$$E: y^2+axy+y=x^3+bx^2+x.$$

\begin{theorem}{\em (Conjectured in \cite[Page 40]{Barry2021Catalan})}\label{thm:2021-p39}
Let $F(x)$ be the generating function 
$$F(x)=\frac{1-(a+1)x}{1-ax-bx^2}\mathcal{C}\left(\frac{-x^3(1-(a+1)x)}{(1-ax-bx^2)^2}\right).$$
Then the Hankel sequence \((H_n(F))_{n\geq0}\) is a \((1,a-b+1)\) Somos-4 sequence.
\end{theorem}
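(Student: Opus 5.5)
The plan is to follow the proof of \cref{thm:2021-c44}. First I turn $F(x)$ into a quadratic continued-fraction equation. Then one application of \cref{lem:sx} produces a series in Wang--Zhang form. Finally I read off $(\alpha,\beta)$ from \cref{prop:wz-universal}.

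\emph{Step 1: the functional equation.} Write $P=1-(a+1)x$, $D=1-ax-bx^2$ and $z=-x^3P/D^2$. Since $\mathcal C(z)=1+z\,\mathcal C(z)^2$ and $\mathcal C(z)=FD/P$, we get
\[
\frac{FD}{P}=1-\frac{x^3P}{D^2}\cdot\frac{F^2D^2}{P^2}=1-\frac{x^3F^2}{P}.
\]
Multiplying through by $P$ gives $FD=P-x^3F^2$, that is,
\[
F(x)=\frac{1-(a+1)x}{1-ax-bx^2+x^3F(x)}.
\]
This is the form of \cref{lem:sx} with parameters $(1,\,-(a+1),\,-a,\,-b,\,0,\,1)$. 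Its leading coefficient is $1$, so the lemma applies without any nonvanishing hypothesis on $a$ or $b$.

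\emph{Step 2: one Sulanke--Xin step.} \cref{lem:sx} gives $H_{n+1}(F)=H_n(G)$, where $G$ satisfies the Wang--Zhang form \eqref{eq:wz-form} with
\[
c_0=-a,\qquad f_0=-\frac{-(a+1)}{1}=a+1,\qquad a_0=A=-\bigl(-b-a(a+1)+(a+1)^2\bigr)=b-a-1.
\]
The remaining parameters $b_0=B$ and $d_0=D$ are polynomials in $a,b$ obtained directly from the formulas of \cref{lem:sx}. Note that $a_0=-(a-b+1)$ is exactly the negative of the expected $\beta$, which is an encouraging consistency sign.

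\emph{Step 3: the Somos parameters.} I will use the polynomial version, \cref{prop:wz-universal}, rather than \cref{thm:wz}. This way the locus $a-b+1=0$, where $a_0=0$, needs no separate treatment. The computation is:
\begin{itemize}
\item $\Delta=a_0(c_0+f_0)-b_0$;
\item $\Gamma=a_0^3-a_0^2d_0+a_0b_0c_0-b_0^2$;
\item $\alpha=\Delta^2$ and $\beta=(a_0-d_0-c_0f_0-f_0^2)\Gamma-a_0\Delta^2$.
\end{itemize}
I expect $\Delta=\pm1$, so that $\alpha=1$, and then $\beta=a-b+1$ after simplification. This symbolic simplification in two parameters is the main computational obstacle. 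I would carry it out first and sanity-check it at a numeric point, say $a=b=0$.

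\emph{Step 4: unshifting.} The identity for $(H_n(G))$ is the identity for $(H_{n+1}(F))$. It therefore yields the claimed recurrence for $(H_n(F))_{n\geq0}$ at every $n\geq5$. The remaining case $n=4$ involves only $H_0(F),\dots,H_4(F)$. I verify it by expanding the first several coefficients of $F$ from the functional equation and computing the small Hankel determinants directly, as was done in \cref{thm:2021-c44}.
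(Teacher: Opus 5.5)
Your proposal is correct and follows the paper's proof essentially step for step: the same functional equation, one application of \cref{lem:sx}, the Wang--Zhang parameters, the recurrence for $n\geq5$ from the shift, and $n=4$ checked directly. The one difference is that you use the polynomial formulas of \cref{prop:wz-universal} rather than \cref{thm:wz}, which avoids the denominators $M=a-b+1$ in $f_1$ and $a_1$; the computation you deferred does close, since $b_0=b-a-2$ and $d_0=b-2a-2$ give $\Delta=1$ and $a_0-d_0-c_0f_0-f_0^2=0$, hence $\beta=-a_0=a-b+1$.
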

\begin{proof}
By the Catalan generating function \(\mathcal{C}(x)=1+x\mathcal{C}(x)^2\), we obtain
\begin{equation}\label{eq:2021-p39-F}
 F(x)=\frac{1-(a+1)x}{1-ax-bx^2+x^3F(x)}.
\end{equation}
Applying \cref{lem:sx} to \eqref{eq:2021-p39-F}, we obtain \(H_{n+1}(F)=H_n(Q)\), where
\begin{align*}
Q(x)=\frac{(-a+b-1)+(-a+b-2)x}{1-ax+(-2a+b-2)x^2+x^2(-1-(-a-1)x)Q(x)}.
\end{align*} 
Thus \(Q(x)\) satisfies the Wang--Zhang condition with 
\[ a_0=-a+b-1,\quad b_0=-a+b-2,\quad c_0=-a, \quad d_0=-2a+b-2,\quad f_0=a+1.
\]
Writing \(M=a-b+1\), the auxiliary quantities are
\[ f_1=-\frac{M+1}{M},\qquad a_1=-\frac{a^2-ab+3a-2b+3}{M^2},\qquad c_0+f_0+f_1=-\frac1M.
\]
By Equations \eqref{eq:wz-alpha}--\eqref{eq:wz-beta}, we obtain \(\alpha=1\) and \(\beta=M\).  This proves the identities for \((H_n(F))_{n\geq0}\) at every \(n\geq5\); direct expansion of \eqref{eq:2021-p39-F} verifies the remaining identity at \(n=4\).
This completes the proof.
\end{proof}

The generating function mentioned in the following conjecture is closely related to the family of elliptic curves \cite[Section 3]{Barry2023Elliptic}
$$E: y^2+axy+by=x^3+cx^2+dx.$$

Suppose $b\neq 0$. Let $g(x)$ denote the generating function given as follows.
\[g(x)=\frac{2b^4x}{\sqrt{P(x)}+R(x)},
\]
where
\begin{align*}
 R(x)={}&-1+\bigl(2(b^4-d-1)-ab\bigr)x-\bigl(ab(d+1)+2b^4-b^2c+(d+1)^2\bigr)x^2,
\\ P(x)={}&R(x)^2+4b^4xK(x),
\\ K(x)={}&1+(ab-b^4+2d+1)x+(abd+2b^4-b^2c+d^2-2)x^2\\
&+(-abd-2ab-b^4+b^2c-d^2-4d-2)x^3.
\end{align*}
Note $\sqrt{P(0)}=1$.
Consequently, by rationalizing, we have
\begin{equation}\label{eq:elliptic-first-quadratic}
 K(x)g(x)^2+R(x)g(x)-b^4x=0.
\end{equation}

Let $\delta=abd-b^2c+d^2$ and $h_n=H_{n+1}(g)$.

\begin{theorem}{\em (Conjectured in \cite[Conjecture~2]{Barry2023Elliptic})}\label{thm:elliptic-2023}
Assume that \(b\ne0\).
The normalized determinants $S_n=b^{-n^2+2n}h_n$ satisfy the \((b^2,abd-b^2c+d^2)\) Somos-4 identity. 
\end{theorem}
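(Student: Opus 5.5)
The plan is to put $g(x)$ into the Sulanke--Xin/Wang--Zhang normal form, read off $(\alpha',\beta')$ for $h_n=H_{n+1}(g)$ over the rational function field $\mathbb Q(a,b,c,d)$, and then transfer the result to $S_n$ by a quadratic-exponent rescaling. All computations are carried out in $\mathbb Q(a,b,c,d)$, and any vanishing intermediate leading coefficient is handled by \cref{prop:wz-universal} and the specialization remark \cref{rem:specialization}.

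\emph{Rescaling step.} \cref{lem:somos-invariance} only covers factors $u v^n$, so I first record an extra invariance. If $(h_n)$ is an $(\alpha',\beta')$ Somos-4 sequence and $T_n=w^{n^2}h_n$, then $T_nT_{n-4}$, $T_{n-1}T_{n-3}$ and $T_{n-2}^2$ carry the factors $w^{2n^2-8n+16}$, $w^{2n^2-8n+10}$ and $w^{2n^2-8n+8}$ respectively. Hence $(T_n)$ is an $(\alpha'w^{-6},\beta'w^{-8})$ Somos-4 sequence. Combining this with \cref{lem:somos-invariance} (the factor $b^{2n}$), the sequence $S_n=b^{-n^2+2n}h_n$ satisfies the $(b^2,\delta)$ identity if and only if $h_n$ satisfies the $(b^{-4}\cdot b^{8}\cdot b^{-6}\ldots)$ identity. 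Concretely, with $w=b^{-1}$ the target for $h_n$ becomes
\[
(\alpha',\beta')=(b^{8},\,b^{8}\delta).
\]
So it suffices to prove that $(H_{n+1}(g))_{n\ge0}$ is a $(b^8,b^8\delta)$ Somos-4 sequence.

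\emph{Normal form.} From \eqref{eq:elliptic-first-quadratic}, together with $R(0)=-1$ and $K(0)=1$, we get $g(0)=1$. I define $Q$ by
\[
1/g=1-ux-x^2Q(x),
\]
where $u$ is read off from the linear coefficient of $g$. Then \cref{lem:jacobi-step} gives $H_{n+1}(g)=H_n(Q)$.

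Next I substitute $g=1/(1-ux-x^2Q)$ into $Kg^2+Rg-b^4x=0$. Multiplying by $(1-ux-x^2Q)^2$ yields
\[
K+R(1-ux-x^2Q)-b^4x(1-ux-x^2Q)^2=0.
\]
This is quadratic in $Q$, and the coefficient of $Q^2$ is $-b^4x^5$. After cancelling the common power of $x$ (which the choice of $u$ is designed to make possible), the equation should take the shape
\[
Q=\frac{a'+b'x}{1+c'x+d'x^2+x^2(e'+f'x)Q},
\]
with $e'$ a nonzero multiple of $b^4$.

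If $e'\neq-1$, I would either rescale $Q$ via \cref{lem:scalings} to force $e'=-1$ (the factor $u^n$ is harmless by \cref{lem:somos-invariance}), or apply \cref{lem:sx} once, which always produces $e=-1$. The result is a series satisfying the Wang--Zhang condition \eqref{eq:wz-form}, possibly after an index shift, which is again absorbed by \cref{lem:somos-invariance}.

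\emph{Reading off the parameters.} Finally I plug the resulting $(a_0,b_0,c_0,d_0,f_0)$ into \eqref{eq:wz-polynomial-parameters}, which gives $\alpha=\Delta^2$ and $\beta=(a_0-d_0-c_0f_0-f_0^2)\Gamma-a_0\Delta^2$. I then track the accumulated scaling factors. The expected outcome is $\Delta=\pm b^4$ (up to the scalings used) and $\beta=b^8\delta$ after simplification. If the index shifts cost one or two initial terms, the few remaining identities at small $n$ are checked directly from the first coefficients of $g$.

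\emph{Main obstacle.} I expect the hardest part to be the algebra in the normal-form step. One has to choose $u$, and possibly a second shift, so that the quadratic for $Q$ genuinely reduces to the $(a+bx)/(\cdots)$ shape: the cubic term $K_3x^3$ and the quadratic $R$ must cancel enough low-order terms. Then one has to verify the polynomial simplification $\beta=b^8\delta$, which I would do symbolically in $\mathbb Q(a,b,c,d)$.
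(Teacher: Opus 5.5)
This is essentially the paper's proof. The forced choice $u=g_1=1$ gives $h_n=H_n(q)$ with $q=\frac{1+(ab+2d+1)x}{1+(ab+2d+2)x+Lx^2-b^4x^3q}$, so in fact $e'=0$ and $f'=-b^4$; rescaling cannot help, and your second option, one application of \cref{lem:sx}, is the one that works. It yields Wang--Zhang data with $\Delta=-b^4$ and $a_0-d_0-c_0f_0-f_0^2=0$, hence $(b^8,b^8\delta)$ for $n\geq5$, plus a direct check at $n=4$. One slip: $T_n=w^{n^2}h_n$ is an $(\alpha'w^{6},\beta'w^{8})$ sequence, not an $(\alpha'w^{-6},\beta'w^{-8})$ one, although your concrete target $(b^8,b^8\delta)$ for $h_n$ (with $w=b^{-1}$) is correct.
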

\begin{proof}
We consider \cref{lem:jacobi-step} by setting 
\begin{align*}
 g(x)=\frac1{1-x-x^2q(x)}.
\end{align*}
By substituting in \eqref{eq:elliptic-first-quadratic}, we obtain
\begin{equation}\label{eq:elliptic-q}
q(x)=\frac{1+(ab+2d+1)x}{1+(ab+2d+2)x+Lx^2-b^4x^3q(x)},
\end{equation}
where
\[L=abd+ab-b^2c+d^2+2d+1.
\]
By \cref{lem:jacobi-step}, we have \(H_n(g)=H_{n-1}(q)\) for \(n\geq1\), and hence
\(h_n=H_n(q)\).
Applying \cref{lem:sx} to \eqref{eq:elliptic-q}, we obtain $H_n(q)=H_{n-1}(Q)$ and the series $Q(x)$ as follows:
\begin{align*}
Q(x)=\frac{ab-L+2d+1+(b^4+ab-L+2d+1)x}{1+(ab+2d+2)x+(2ab-L+4d+2)x^2+x^2(-1-(ab+2d+1)x)Q(x)}.
\end{align*}
The series \(Q(x)\) satisfies the Wang--Zhang condition with
\begin{align*}
 a_0&=-\delta,&
 b_0&=-\delta+b^4,&
 c_0&=ab+2d+2,\\
 d_0&=-\delta+ab+2d+1,&
 f_0&=-(ab+2d+1).
\end{align*}
According to \eqref{eq:wz-alpha}--\eqref{eq:wz-beta}, we obtain 
\begin{equation}\label{eq:elliptic-unscaled-parameters}
 \alpha_Q=b^8,\qquad \beta_Q=b^8\delta.
\end{equation}
The Sulanke--Xin identity (\cref{lem:sx}) has leading coefficient one in this application, so
\[h_n=H_n(q)=H_{n-1}(Q)\qquad(n\geq1).
\]
It follows from \eqref{eq:elliptic-unscaled-parameters} that \((h_n)\) obeys the \((b^8,b^8\delta)\) identity for \(n\geq5\).  Direct substitution of the initial determinants verifies the remaining identity at \(n=4\).  More
explicitly, set
\[\Theta=a^2b^2d-ab^3c+3abd^2+b^4-2b^2cd+2d^3.
\]
The first three determinants of \(Q\) are
\[ H_1(Q)=-\delta,\qquad H_2(Q)=-b^4\Theta,\qquad
 H_3(Q)=-b^{12}\Theta+b^8\delta^3.
\]
Consequently
\[(h_0,h_1,h_2,h_3,h_4)
 =(1,1,-\delta,-b^4\Theta,-b^{12}\Theta+b^8\delta^3),
\]
and the \(n=4\) identity is immediate.

It remains to transport the normalization.  If \(T_n=b^{-n^2+2n}h_n\), then comparison of the exponents in the three products in
\eqref{eq:somos4-bilinear} gives
\[ \alpha_T=b^{-6}\alpha_Q=b^2,\qquad \beta_T=b^{-8}\beta_Q=\delta.
\]
The first four terms, calculated from \(g(x)\) are
\[ 1,\quad b,\quad -\delta,\quad -b\bigl(a^2b^2d+ab(3d^2-b^2c)+b^4-2b^2cd+2d^3\bigr),
\]
with the sign convention of \cite{Barry2023Elliptic}. 
This completes the proof.
\end{proof}

We next give the definitions of general elliptic curves $E$ and their division polynomials. We only introduce the definitions and formulas that are needed; for further details, see~\cite{Shipsey,LW. SSBCHMW,Ward1948,WardDuck}.

Let \(E\) be given by the Weierstrass equation
$$E : y^2 + a_1xy + a_3y = x^3 + a_2x^2 + a_4x + a_6,$$
and let 
\begin{align*}
b_2 = a_1^2 + 4a_2,\quad b_4 = a_1 a_3 + 2a_4, \quad b_6 = a_3^2 + 4a_6, 
\quad b_8 = a_1^2 a_6 + 4a_2 a_6 - a_1 a_3 a_4 + a_2 a_3^2 - a_4^2.
\end{align*}
Note that $4b_8=b_2b_6-b_4^2$.
When \(\text{char}(K) \neq 2, 3\), one may instead use the short Weierstrass form 
\(E : y^2 = x^3 + Ax + B\).

We define \emph{division polynomials} \(\psi_m \in \mathbb{Z}[a_1, \dots, a_6, x, y]\) using initial values
\begin{align*}
&\psi_0 = 0,\quad \psi_1 = 1,\quad \psi_2 = 2y + a_1x + a_3,
\quad \psi_3 = 3x^4 + b_2x^3 + 3b_4x^2 + 3b_6x + b_8,
\\ & \psi_4 = \psi_2 \cdot \bigl(2x^6 + b_2x^5 + 5b_4x^4 + 10b_6x^3 + 10b_8x^2 + (b_2b_8 - b_4b_6)x + (b_4b_8 - b_6^2)\bigr),
\end{align*}
and then inductively by the formulas
\begin{align*}
\psi_{2m+1} &= \psi_{m+2}\psi_m^3 - \psi_{m-1}\psi_{m+1}^3 \quad \text{for } m \geq 2,
\\ \psi_{2}\psi_{2m} &= \psi_{m-1}^2\psi_m\psi_{m+2} - \psi_{m-2}\psi_m\psi_{m+1}^2 \quad \text{for } m \geq 3.
\end{align*}
We can verify that \(\psi_m\) is a polynomial for all \(m \geq 1\).
In general, the division polynomials satisfy the following equality:
\begin{align}\label{Equation-Psi-Divis}
\psi_{m+n} \psi_{m-n} =\psi_{m+1} \psi_{m-1} \psi_{n}^2-\psi_{n+1}\psi_{n-1} \psi_{m}^2.    
\end{align}

\begin{theorem}{\em (Conjectured in \cite[Conjecture~2]{Barry2023Elliptic})}\label{thm:elliptic-Division-2023}
Notation follows \cref{thm:elliptic-2023}. Assume that \(b\ne0\) and \(P=(x,y)=(0,0)\).
Let \(E\) be given by the Weierstrass equation $E: y^2+axy+by=x^3+cx^2+dx$.
Then we have $S_n=\psi_{n+1}(P)$ for $n\geq0$.
In particular, the determinants $S_n=b^{-n^2+2n}h_n$ give the shifted division-polynomial sequence at the point \((0,0)\).
\end{theorem}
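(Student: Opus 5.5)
The plan is to show that both sequences satisfy the same $(b^2,\delta)$ Somos-4 recurrence and agree in four consecutive initial terms, and then conclude equality over the rational function field $\mathbb{Q}(a,b,c,d)$.

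\textbf{Step 1: division values at $P=(0,0)$.} Evaluate the division polynomials at $P$ with $a_1=a$, $a_3=b$, $a_2=c$, $a_4=d$, $a_6=0$. Then
\[
b_2=a^2+4c,\qquad b_4=ab+2d,\qquad b_6=b^2,\qquad b_8=-d^2+cb^2-abd=-\delta .
\]
This gives $\psi_1(P)=1$, $\psi_2(P)=2\cdot 0+a\cdot 0+b=b$, and $\psi_3(P)=b_8=-\delta$. For $\psi_4$, the inner polynomial at $x=0$ is $b_4b_8-b_6^2$, so
\[
\psi_4(P)=b\bigl(-(ab+2d)\delta-b^4\bigr).
\]
We check that this equals
\[
-b\bigl(a^2b^2d+ab(3d^2-b^2c)+b^4-2b^2cd+2d^3\bigr)=-b\,\Theta .
\]
Indeed, expanding $(ab+2d)(abd-b^2c+d^2)$ gives
\[
a^2b^2d-ab^3c+abd^2+2abd^2-2b^2cd+2d^3,
\]
which is $\Theta-b^4$. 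Hence $\psi_4(P)=-b\Theta$. This matches the first four terms $S_0,\dots,S_3$ listed at the end of the proof of \cref{thm:elliptic-2023}.

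\textbf{Step 2: Somos recurrence for $\psi_{n+1}(P)$.} Take \eqref{Equation-Psi-Divis} with $n=2$ and $m$ replaced by $m$ to get
\[
\psi_{m+2}\psi_{m-2}=\psi_{m+1}\psi_{m-1}\psi_2^2-\psi_3\psi_1\psi_m^2=b^2\,\psi_{m+1}\psi_{m-1}+\delta\,\psi_m^2 .
\]
Setting $T_n=\psi_{n+1}(P)$, this reads $T_nT_{n-4}=b^2T_{n-1}T_{n-3}+\delta T_{n-2}^2$, valid for $n\ge 4$ (that is, $m=n-1\ge3$). If one prefers not to rely on \eqref{Equation-Psi-Divis} for small indices, the case $n=4$, i.e.\ $\psi_5\psi_1=b^2\psi_4\psi_2+\delta\psi_3^2$, is exactly the defining formula $\psi_5=\psi_4\psi_2^3-\psi_1\psi_3^3$ after dividing through.

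\textbf{Step 3: conclude by induction.} By \cref{thm:elliptic-2023}, $S_n$ satisfies the same $(b^2,\delta)$ identity, and by Step 1 it shares the initial values $S_0,\dots,S_3$ with $T_n$. Induction then gives $S_n=T_n$, provided we may divide by $S_{n-4}$.

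\textbf{Main obstacle: division by $S_{n-4}$.} The induction requires $S_{n-4}\neq0$. I would handle this by working generically: regard $a,b,c,d$ as indeterminates, so that $S_n$ and $\psi_{n+1}(P)$ are polynomials (Laurent in $b$). The terms $\psi_m(P)$ are nonzero as polynomials, since a point of infinite order exists for a generic curve; this can also be argued from the nonvanishing of $h_n$, as in the specialization argument of \cref{prop:wz-universal}. Having established the identity of polynomials, specialize to any values with $b\neq0$. A secondary check is the sign convention of $S_3$, which Step 1 confirms.
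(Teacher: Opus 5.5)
Your proposal is essentially the paper's own proof. It uses the same division values $\psi_1(P),\dots,\psi_4(P)$, the $n=2$ case of \eqref{Equation-Psi-Divis} to get the $(b^2,\delta)$ recurrence, and an induction whose only obstruction is $\psi_m(P)\neq0$, which the paper also leaves unproved; your explicit ``prove it in $\mathbb{Z}[a,b^{\pm1},c,d]$, then specialize'' framing is cleaner than the paper's wording, which literally asks for $\psi_m(P)\neq0$ at every parameter value (false when $P$ is torsion, e.g.\ $(a,b,c,d)=(1-t,-t,-t,0)$ gives $\psi_5(P)=0$). What is actually needed is that this particular point $(0,0)$ is non-torsion on the generic curve, which follows from the specialization $(a,b,c,d)=(0,1,0,-1)$ the paper mentions, since $(0,0)$ has infinite order on $y^2+y=x^3-x$. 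Your alternative appeal to \cref{prop:wz-universal} does not transfer: nonvanishing of $H_m(Q_0)$ over $\mathbb{Z}[A,B,C,D,F]$ is not preserved when you substitute these specific parameters, and since $c_0+f_0=1$ identically here, the family never reaches the base point $(1,0,0,0,0)$ used in that proof.
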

\begin{proof}
From equations for $b_i$ ($i=2,4,6,8$) and $\psi_{i}$ ($i=1,2,3,4$) above, we can compute the following:
$$\psi_{1}(P)=1,\quad \psi_{2}(P)=b,\quad \psi_{3}(P)=-\delta,\quad \psi_{4}(P)=-b\bigl(a^2b^2d+ab(3d^2-b^2c)+b^4-2b^2cd+2d^3\bigr).$$
Therefore, $S_n$ and $\psi_{n+1}(P)$ satisfy the same initial conditions.
Since $\psi_{n}(P)$ satisfies \eqref{Equation-Psi-Divis}, by setting $n=2$, we obtain
$\psi_{m+2}\psi_{m-2} = \psi_{m+1}\psi_{m-1}\psi_2^2 - \psi_3\psi_1\psi_m^2$.
Furthermore, we have
$$\psi_{m+2}(P)\psi_{m-2}(P) = b^2 \psi_{m+1}(P)\psi_{m-1}(P) + \delta \psi_m(P)^2.$$
Setting $m=n-1$, we get
$$\psi_{n+1}(P)\psi_{n-3}(P) = b^2 \psi_n(P)\psi_{n-2}(P) + \delta \psi_{n-1}(P)^2.$$
Consequently, both $(\psi_{n+1}(P))_{n\geq 0}$ and $(S_n)_{n\geq 0}$ satisfy the $(b^2, \delta)$ Somos-4 equation.
Suppose inductively that they agree through index \(n-1\), where \(n\geq4\). Subtracting the two recurrences gives $(S_n-\psi_{n+1}(P))\psi_{n-3}(P)=0$.
It remains only to prove that $\psi_{m}(P)\neq 0$ for all $m\geq 1$.
This requires more knowledge of elliptic curves; we omit it here. Interested readers may try to prove it.
(We know that $\psi_i(P) = 0 \iff iP = \mathcal{O}$ for some $i$, where $\mathcal{O}$ denotes the point at infinity. By specializing \((a,b,c,d)=(0,1,0,-1)\), it can be proved that $P$ has infinite order.)
This completes the proof.
\end{proof}

\section{Quadratic and cubic continued fractions}\label{Section-Tree}

This section mainly concerns Hankel determinants corresponding to two different kinds of continued fractions.
The following theorem concerns quadratic continued fractions.

\begin{theorem}\label{thm:quadratic-master}
If the generating function $F(x)$ satisfies 
\begin{equation}\label{eq:quadratic-master}
 F(x)=\frac1{1-rx-sx^2-tx^2F(x)},
\end{equation}
then \((H_n(F))_{n\geq0}\) is an \((\alpha,\beta)\) Somos-4 sequence, where
\begin{align*}
 \alpha=r^2t^2,\qquad \beta=t^2\bigl((s+t)^2-r^2t\bigr).
\end{align*}
\end{theorem}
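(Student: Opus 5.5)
The plan is to bring \eqref{eq:quadratic-master} into the Wang--Zhang form \eqref{eq:wz-cross-multiplied} by a simple rescaling, and then read off $\alpha$ and $\beta$ from \cref{prop:wz-universal}.

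First, set $Q(x)=tF(x)$. Multiplying \eqref{eq:quadratic-master} by $t$ gives
\[
Q(x)=\frac{t}{1-rx-sx^2-x^2Q(x)},
\]
that is, $(1-rx-sx^2)Q(x)+x^2(-1+0\cdot x)Q(x)^2=t$. This is exactly \eqref{eq:wz-cross-multiplied} with
\[
(a_0,b_0,c_0,d_0,f_0)=(t,0,-r,-s,0).
\]
By \cref{lem:scalings}, $H_n(Q)=t^nH_n(F)$.

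Next, I compute the quantities in \eqref{eq:wz-polynomial-parameters}:
\[
\Delta=a_0(c_0+f_0)-b_0=-rt,\qquad
\Gamma=a_0^3-a_0^2d_0+a_0b_0c_0-b_0^2=t^2(t+s).
\]
Hence $\alpha=\Delta^2=r^2t^2$. For $\beta$, note that $a_0-d_0-c_0f_0-f_0^2=t+s$, so
\[
\beta=(t+s)\cdot t^2(t+s)-t\cdot r^2t^2=t^2\bigl((s+t)^2-r^2t\bigr).
\]
Therefore \cref{prop:wz-universal} shows that $(H_n(Q))_{n\ge0}$ is an $(\alpha,\beta)$ Somos-4 sequence with exactly these parameters.

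Finally, I transfer the identity back to $F$. The Somos identity \eqref{eq:somos4-bilinear} is homogeneous under $S_n\mapsto v^nS_n$: each of the three products carries the total factor $v^{2n-4}$. This is the case $u=1$, $k=0$ of \cref{lem:somos-invariance}, taken with $v=t^{-1}$. So when $t$ is invertible, $H_n(F)=t^{-n}H_n(Q)$ satisfies the same $(\alpha,\beta)$ identity.

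The only delicate point is the case $t=0$, or more generally $t$ not invertible. To handle it, I will work with $r,s,t$ as indeterminates over $\mathbb Z$, following the specialization principle of Step~5 in the proof of \cref{prop:wz-universal}. The coefficients of $F$ are polynomials in $r,s,t$, since the recursion obtained from \eqref{eq:quadratic-master} by equating coefficients has leading coefficient $1$. Consequently
\[
H_n(F)H_{n-4}(F)-\alpha H_{n-1}(F)H_{n-3}(F)-\beta H_{n-2}(F)^2
\]
is a polynomial in $r,s,t$. It vanishes in $\mathbb Q(r,s,t)$ by the argument above, so it vanishes identically as a polynomial. Specializing $r,s,t$ then proves the identity for all parameter values, including $t=0$.
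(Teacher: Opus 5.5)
Your proposal is correct and follows essentially the same route as the paper. Both use the rescaling $Q=tF$, the identification $(a_0,b_0,c_0,d_0,f_0)=(t,0,-r,-s,0)$ with the Wang--Zhang form, the removal of the geometric factor $t^n$, and specialization for degenerate parameters. The differences are cosmetic: you read off $\alpha,\beta$ from the polynomial quantities $\Delta,\Gamma$ of \cref{prop:wz-universal} rather than from $f_1,a_1$ in \cref{thm:wz}, and you write out the $t=0$ argument over $\mathbb Z[r,s,t]$ that the paper only cites via \cref{rem:specialization}.
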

\begin{proof}
Set \(Q(x)=tF(x)\). Then we have
\[Q(x)=\frac{t}{1-rx-sx^2-x^2Q(x)},
\]
which is \eqref{eq:wz-form} with
\[ a_0=t,\quad b_0=0,\quad c_0=-r,\quad d_0=-s,\quad f_0=f_1=0,
 \quad a_1=s+t.
\]
Equations \eqref{eq:wz-alpha}--\eqref{eq:wz-beta} give
\[
 \alpha=r^2t^2,\qquad
 \beta=-r^2t^3+(s+t)^2t^2.
\]
Finally, \(H_n(Q)=t^nH_n(F)\), and multiplication by \(t^n\) leaves both Somos parameters unchanged.  The singular cases follow from \cref{rem:specialization}.
\end{proof}

\begin{corollary}{\em (Conjectured in \cite[Conjecture~10]{Barry2010} and proved in \cite[Theorem 1.2]{ChangHu2012})}\label{cor:barry2010}
Let \(a_0=1\), \(a_1=r\), and
\begin{align}\label{eq:RST-Conject}
a_n=ra_{n-1}+sa_{n-2}+t\sum_{j=0}^{n-2}a_ja_{n-2-j}\qquad(n\geq2).
\end{align}
Let \(F(x)=\sum_{n\geq0}a_nx^n\). Then \((H_n(F))_{n\geq0}\) forms an $\bigl(r^2t^2,\;t^2((s+t)^2-r^2t)$ Somos-4 sequence.  
\end{corollary}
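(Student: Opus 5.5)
The plan is to reduce the corollary directly to \cref{thm:quadratic-master}. Multiplying the recurrence \eqref{eq:RST-Conject} by $x^n$ and summing over $n\geq2$, I will show that $F(x)=\sum_{n\geq0}a_nx^n$ satisfies the quadratic equation in \eqref{eq:quadratic-master}.

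First, the left side gives $F(x)-1-rx$. On the right:
\[
\sum_{n\ge2}ra_{n-1}x^n=rx(F(x)-1),\qquad \sum_{n\ge2}sa_{n-2}x^n=sx^2F(x),
\]
\[
\sum_{n\ge2}t\sum_{j=0}^{n-2}a_ja_{n-2-j}x^n=tx^2F(x)^2.
\]
Hence
\[
F(x)=1+rxF(x)+sx^2F(x)+tx^2F(x)^2,
\]
that is, $F(x)\bigl(1-rx-sx^2-tx^2F(x)\bigr)=1$. Here $a_1=r$ is exactly what cancels the $-rx$ term. Since $F(0)=1$ and the factor in parentheses has constant term $1$, we may divide, which gives precisely \eqref{eq:quadratic-master}.

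By uniqueness of the power-series solution with constant term $1$ (the coefficient recursion is \eqref{eq:RST-Conject} itself), $F$ is the series of \cref{thm:quadratic-master}. That theorem then gives $\alpha=r^2t^2$ and $\beta=t^2\bigl((s+t)^2-r^2t\bigr)$.

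The only delicate point is degenerate parameters, such as $t=0$, where the normalization $Q=tF$ in the proof of \cref{thm:quadratic-master} has zero leading coefficient. I will handle this as the paper does, via \cref{rem:specialization} and \cref{prop:wz-universal}. Work with $r,s,t$ as indeterminates, so that the identity holds as a polynomial identity in $\mathbb Z[r,s,t]$. Indeed, $H_n(tF)=t^nH_n(F)$, and the Somos identity for $t^nH_n(F)$ equals $t^{2n-4}$ times the one for $H_n(F)$. Since $\mathbb Z[r,s,t]$ is a domain, we can cancel $t^{2n-4}$ and then specialize.

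I expect no real obstacle. The main care is just this cancellation argument, plus the bookkeeping of the summation shifts.
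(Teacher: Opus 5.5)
Your proposal is correct and follows the paper's own proof: you rewrite the recurrence as $F=1+rxF+sx^2F+tx^2F^2$, which is equivalent to \eqref{eq:quadratic-master}, and then invoke \cref{thm:quadratic-master}. Your handling of the degenerate case $t=0$ is also valid, and it spells out what the paper leaves to \cref{rem:specialization}. You work in the domain $\mathbb{Z}[r,s,t]$, cancel the factor $t^{2n-4}$ that relates the Somos identities for $H_n(tF)$ and $H_n(F)$, and then specialize.
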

\begin{proof}
The recurrence translates to $F(x)=1+rxF(x)+sx^2F(x)+tx^2F(x)^2$,
which is equivalent to \eqref{eq:quadratic-master}; the assertion therefore follows from \cref{thm:quadratic-master}.
\end{proof}

More explicitly, \cref{thm:quadratic-master} directly implies the following conjectures.

\begin{corollary}{\em (Conjectured in \cite[Conjecture~5]{Barry2011Invariant})}\label{Corollary-Conjecture-5-Bary}
Let \(F_{a,b}(0)=1\) be defined by
\[F_{a,b}=\frac{1}{1-ax-(b-1)x^2-x^2F_{a,b}}.\]
Then \(\bigl(H_{n+1}(F_{a,b})\bigr)_{n\geq0}\) is an \((a^2,b^2-a^2)\) Somos-4 sequence.
\end{corollary}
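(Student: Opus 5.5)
The plan is to obtain this as a direct specialization of \cref{thm:quadratic-master}, followed by an index shift handled by \cref{lem:somos-invariance}.

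First, compare the defining equation of \(F_{a,b}\) with \eqref{eq:quadratic-master}. Taking
\[
(r,s,t)=(a,\;b-1,\;1)
\]
turns
\[
F(x)=\frac1{1-rx-sx^2-tx^2F(x)}
\]
into exactly
\[
F_{a,b}=\frac1{1-ax-(b-1)x^2-x^2F_{a,b}} .
\]
The normalization \(F_{a,b}(0)=1\) selects the unique power series solution, which is the same series as in \cref{thm:quadratic-master}.

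Next, \cref{thm:quadratic-master} shows that \((H_n(F_{a,b}))_{n\geq0}\) is an \((\alpha,\beta)\) Somos-4 sequence with
\[
\alpha=r^2t^2=a^2,\qquad \beta=t^2\bigl((s+t)^2-r^2t\bigr)=(b-1+1)^2-a^2=b^2-a^2 .
\]
Since \(t=1\), the rescaling \(Q=tF\) in that proof is the identity. The possible degeneracies in \(a\) and \(b\) are handled by the specialization principle of \cref{rem:specialization} (equivalently, \cref{prop:wz-universal} with \(a_0=t=1\), which is a unit). So no nonvanishing hypotheses are needed.

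Finally, apply \cref{lem:somos-invariance} with \(u=v=1\) and \(k=1\). This gives that \(T_n=H_{n+1}(F_{a,b})\) satisfies the same \((a^2,b^2-a^2)\) identity for all \(n\geq4\). This is clear directly as well: the identity for \(T_n\) at index \(n\) is the identity for \(H\) at index \(n+1\geq5\).

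There is no real obstacle here. The only points needing care are the parameter matching \(s=b-1\), so that \(s+t=b\), and confirming that the shifted sequence inherits the recurrence rather than acquiring new parameters. Both are immediate once written out.
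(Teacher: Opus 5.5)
Your proposal is correct and is essentially the paper's own proof: the paper likewise specializes $(r,s,t)=(a,b-1,1)$ in \cref{thm:quadratic-master}. Your explicit use of \cref{lem:somos-invariance} with $k=1$ for the passage to $\bigl(H_{n+1}(F_{a,b})\bigr)_{n\geq0}$ just spells out the index shift, which the paper leaves implicit.
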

\begin{proof}
The proof follows by setting \((r,s,t)=(a,b-1,1)\) in \cref{thm:quadratic-master}.
\end{proof}

\begin{corollary}{\em (Conjectured in \cite[Section~7, Page~227]{RajkovicBarrySavic2012})}\label{Corollary-RBS-111}
For \((r,s,t)=(1,1,1)\) in \eqref{eq:RST-Conject}, the coefficient sequence is
[A128720] on OEIS \cite{Sloane23} and begins
\[1,1,3,6,16,40,109,297,\ldots,\]
and its Hankel transform begins
\[1,1,2,5,17,109,706,9529,149057,\ldots.\]
Hence the latter is the \((1,3)\) Somos-4 sequence [A174168] on OEIS \cite{Sloane23}.
\end{corollary}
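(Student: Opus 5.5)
This is a direct specialization of \cref{cor:barry2010}, equivalently \cref{thm:quadratic-master}, at $(r,s,t)=(1,1,1)$, followed by a finite check of initial values.

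\emph{Step 1: the parameters.} Setting $(r,s,t)=(1,1,1)$ in \eqref{eq:RST-Conject}, the generating function satisfies
\[
F(x)=\frac{1}{1-x-x^2-x^2F(x)}.
\]
\cref{thm:quadratic-master} then gives
\[
\alpha=r^2t^2=1,\qquad \beta=t^2\bigl((s+t)^2-r^2t\bigr)=4-1=3 .
\]
Hence $(H_n(F))_{n\geq0}$ satisfies the $(1,3)$ Somos-4 identity for all $n\geq4$. No degenerate specialization occurs, since $a_0=t=1\neq0$, so even \cref{thm:wz} applies directly.

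\emph{Step 2: the coefficients.} I will compute the first terms from the recurrence $a_n=a_{n-1}+a_{n-2}+\sum_{j}a_ja_{n-2-j}$:
\[
a_2=1+1+1=3,\qquad a_3=3+1+2=6,\qquad a_4=6+3+(3+1+3)=16,
\]
and similarly $a_5=40$ and $a_6=109$. This matches the listed beginning of [A128720].

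\emph{Step 3: the Hankel transform.} I will evaluate $H_0,\dots,H_3$ directly, for example $H_2=1\cdot3-1=2$, and obtain $1,1,2,5$. An alternative is \cref{lem:jacobi-product} after expanding $F$ as a J-fraction; the first step gives $u_0=1$ and $v_1=2$, consistent with $H_2=2$. Given these four nonzero initial values, the $(1,3)$ recurrence determines the remaining terms, since each $H_{n-4}\neq0$ inductively because the terms are positive. For instance,
\[
H_4=(5\cdot2+3\cdot4)/1=22\cdot\ldots
\]
— I will recompute this carefully: $H_4H_0=1\cdot H_3H_1+3H_2^2=5+12=17$. The subsequent terms $109,706,\dots$ follow in the same way and agree with [A174168].

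\emph{Main obstacle.} The only real risk is arithmetic when matching the initial terms against the OEIS entries. The structural content is entirely supplied by \cref{thm:quadratic-master}.
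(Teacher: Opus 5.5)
Your proposal is correct and follows the paper's proof: specialize \cref{thm:quadratic-master} at $(r,s,t)=(1,1,1)$ to get $(\alpha,\beta)=(1,3)$, then match the initial values $1,1,2,5$. Together with the proved recurrence and positivity of the terms, these initial values pin down the whole sequence, and your computed values ($a_2,\dots,a_6$, $H_2=2$, $H_3=5$, $H_4=17$) are right. Delete the abandoned line $H_4=(5\cdot 2+3\cdot 4)/1=\ldots$, which uses the wrong products; the corrected computation $H_4H_0=H_3H_1+3H_2^2=17$ is the one to keep.
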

\begin{proof}
The proof follows by setting \((r,s,t)=(1,1,1)\) in \cref{thm:quadratic-master}.
The displayed initial determinants agree with [A174168]; together with the proved \((1,3)\) recurrence, this gives the asserted identification.
\end{proof}

A Motzkin-Schr\"oder path is a lattice path from $(0,0)$ to $(n,0)$ using four types of steps, $U=(1,1)$, $D=(1,-1)$, $E=(1,0)$ and $\overline{N}=(2,0)$, that does not go below the axis $y=0$.
Such paths are widely studied; see, for example, \cite{ChenYanYang,Deutsch2002,Ramirez}.

\begin{corollary}{\em (Conjectured in \cite[Conjecture~8]{Barry2023Motzkin})}\label{Conjecture-8-Barry-ARST}
Let \(A_n(r,s,t)\) count Motzkin--Schr\"oder paths with steps $U=(1,1)$, $D=(1,-1)$, $E=(1,0)$, $\overline{N}=(2,0)$.
Give \((E,\overline{N},U)\) \((r,s,t)\) colors, and leave \(D\) uncolored respectively.
Then \(\left(H_{n+1}\bigl(A(r,s,t)\bigr)\right)_{n\geq 0}\) is an $\bigl((rt)^2,\;t^2((s+t)^2-r^2t)\bigr)$ Somos-4 sequence. 
\end{corollary}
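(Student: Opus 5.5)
We would reduce the statement to \cref{thm:quadratic-master}. First we set up the standard first-return decomposition of Motzkin--Schr\"oder paths. A nonempty path begins with one of three kinds of step. It may begin with a level step $E$ (weight $r$) followed by an arbitrary path. It may begin with a long level step $\overline{N}$ (weight $s$) followed by an arbitrary path. Otherwise it begins with $U\,P_1\,D\,P_2$, where $U$ has weight $t$, $D$ is uncolored, and $P_1,P_2$ are arbitrary paths. Let $A(x)=A(r,s,t;x)=\sum_{n\ge0}A_n(r,s,t)x^n$, with $x$ marking horizontal length. The $E$ step has length $1$, while $\overline{N}$ and the pair $U,D$ each have length $2$. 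The decomposition therefore gives
\[
A(x)=1+rxA(x)+sx^2A(x)+tx^2A(x)^2.
\]
This is exactly the functional equation in the proof of \cref{cor:barry2010}, so it is equivalent to \eqref{eq:quadratic-master}, namely $A(x)=1/(1-rx-sx^2-tx^2A(x))$. Equivalently, $A_n(r,s,t)$ satisfies recurrence \eqref{eq:RST-Conject} with $a_0=1$, $a_1=r$.

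Second, we apply \cref{thm:quadratic-master} directly. It gives that $(H_n(A))_{n\ge0}$ is an $\bigl(r^2t^2,\,t^2((s+t)^2-r^2t)\bigr)$ Somos-4 sequence. Third, we pass to the shifted sequence $(H_{n+1}(A))_{n\ge0}$. By \cref{lem:somos-invariance} with $u=v=1$ and $k=1$, any shift of an $(\alpha,\beta)$ Somos-4 sequence is again $(\alpha,\beta)$ Somos-4 with the same parameters. This yields the claim.

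The only real point to check is the combinatorial translation. We must make sure the coloring convention matches Barry's: the $(r,s,t)$ colors attach to $(E,\overline{N},U)$, so $U$ carries weight $t$ and $D$ carries weight $1$. We must also confirm that Barry indexes $A_n$ by horizontal length rather than by number of steps. If Barry's convention differs, for instance by indexing with a shift or with a different initial term, the fix is a rescaling. In that case we would use \cref{lem:scalings} together with \cref{lem:somos-invariance}, which preserve the parameters up to the predictable factors. Degenerate specializations, such as $t=0$, are covered by \cref{rem:specialization}, since the parameter identities are polynomial.
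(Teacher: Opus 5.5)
Your proposal is correct and is essentially the paper's argument. The paper cites Barry's Proposition~1 for the recurrence $A_n=rA_{n-1}+sA_{n-2}+t\sum_{i=0}^{n-2}A_iA_{n-2-i}$ with $A_0=1$, $A_1=r$ and then invokes \cref{cor:barry2010}, whereas you derive the same functional equation yourself from the first-return decomposition and make the index shift via \cref{lem:somos-invariance} explicit; your worry about Barry's indexing is unnecessary, since the paths are defined to run from $(0,0)$ to $(n,0)$.
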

\begin{proof}
According to \cite[Proposition~1]{Barry2023Motzkin}, the number $A_n=A_n(r,s,t)$ is given by the following recurrence
$$A_n=rA_{n-1}+sA_{n-2}+t\sum_{i=0}^{n-2}A_iA_{n-2-i},\qquad A_0=1,\ A_1=r.$$
The proof follows immediately from \cref{cor:barry2010}.
\end{proof}

A $3$-generalized Motzkin path is a lattice path with unit up and down steps given, respectively, by $(1,1)$ and $(1,-1)$, and three horizontal steps of lengths, respectively, of $1$, $2$, and $3$.
The generating function $G(x)$ of these $3$-generalized Motzkin paths is given by the continued fraction expression
$$G(x)=\frac{1}{1-x-x^2-x^3-x^2G(x)}.$$
We prove, with the aid of $G(x)$, that not all quadratic generating functions have Hankel transforms which are $(\alpha,\beta)$ Somos-4 sequences.

\begin{proposition}\label{prop:motzkin-blanket-false}
Let $G(x)=\sum_{n\geq 0}g_nx^n$ be the generating function for the 3-generalized Motzkin paths.  Its Hankel transform is not an
\((\alpha,\beta)\) Somos-4 sequence for any constants \(\alpha,\beta\).
\end{proposition}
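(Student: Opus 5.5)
The plan is a direct numerical contradiction. Suppose the Hankel transform $h_n=H_n(G)$ satisfies $h_nh_{n-4}=\alpha h_{n-1}h_{n-3}+\beta h_{n-2}^2$ for all $n\geq4$. Then the cases $n=4$ and $n=5$ give two linear equations in the unknowns $(\alpha,\beta)$. If these have a unique solution, the identity at $n=6$ (or $n=7$) must also hold with that $(\alpha,\beta)$, and I expect it to fail. So the whole proof reduces to computing enough terms $h_0,\dots,h_7$ exactly.

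To get these determinants cheaply, I would not expand $G$ and take large determinants. Instead I would use the reduction machinery already in place. First, $G=1/(1-x-x^2(1+x+G))$. By \cref{lem:jacobi-step} with $u=v=1$ and $G$ replaced by $1+x+G$, this gives $H_n(G)=H_{n-1}(P)$ with $P=1+x+G$. Next I would write $P$ in the form of \cref{lem:sx}. From $G(1-x-x^2-x^3-x^2G)=1$ and $G=P-1-x$, one obtains a quadratic for $P$ whose degree in $x$ is too high for \cref{lem:sx} directly.

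Therefore the more robust route is simply to expand $g_n$ from the recurrence
\[
g_n=g_{n-1}+g_{n-2}+g_{n-3}+\sum_{i+j=n-2}g_ig_j .
\]
This gives $1,1,3,7,20,\dots$. Then I would compute $h_0=1,h_1=1,h_2,\dots,h_7$, either by Gaussian elimination or via the J-fraction coefficients from \cref{lem:jacobi-product}. Since the terms are small integers, these determinants are routine.

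With the values in hand, I solve the $2\times2$ system from $n=4,5$. I then check that its determinant $h_3h_1\cdot h_3^2-h_2^2\cdot h_4h_2\neq0$, so that $(\alpha,\beta)$ is forced. Finally I exhibit the discrepancy at $n=6$. The one subtlety is nondegeneracy: the statement quantifies over all constants, so I must make sure the $2\times 2$ system is not singular. If it were singular, I would instead use a different pair of indices, for example $n=5,6$, to pin down $(\alpha,\beta)$ and test at $n=7$. The main obstacle is purely the computational accuracy of $h_6$ and $h_7$, which are $6\times6$ and $7\times7$ determinants. I would cross-check them by computing the J-fraction coefficients $v_j$ via the standard quotient-difference recursion and using the product formula of \cref{lem:jacobi-product}.
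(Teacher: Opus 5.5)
Your plan is essentially the paper's argument; the only difference is that the paper first passes to the shifted sequence $(H_{n+1}(G))_{n\geq0}$ via \cref{lem:somos-invariance}. There it uses the identities at indices $4,5,6$, which force $\alpha=193/114$, $\beta=-1/57$ and then fail by $-43367/114$. Working directly with $h_n=H_n(G)$, as you propose, is slightly cheaper: it needs only $h_0,\dots,h_6$ (i.e.\ $g_0,\dots,g_{10}$) and no shift lemma.

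One correction to your numbers: $g_4=18$, not $20$, so the sequence is $1,1,3,7,18,48,132,372,\dots$. With the correct values $h_0,\dots,h_6=1,1,2,2,-5,-17,-29$, the equations $2\alpha+4\beta=-5$ and $-10\alpha+4\beta=-17$ have determinant $48\neq0$ and force $\alpha=1$, $\beta=-7/4$. The $n=6$ identity would then require $-58=-34-175/4=-311/4$, which is false, so $h_7$ is not needed.
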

\begin{proof}
If \((H_n(G))_{n\geq0}\) were a Somos--4 sequence, then its forward shift
\((H_{n+1}(G))_{n\geq0}\) would be one as well by
\cref{lem:somos-invariance}.  It therefore suffices to disprove the
recurrence for the shifted sequence.
Exact expansion gives
\[
 (g_n)_{n\geq0}=1,1,3,7,18,48,132,372,\ldots
\]
and
\[
 (H_{n+1}(G))_{n\geq0}=1,2,2,-5,-17,-29,-70,-70,169,577,\ldots.
\]
The identities at indices \(4\) and \(5\) would force $\alpha=\frac{193}{114}$ and $\beta=-\frac{1}{57}$.
At index \(6\), however, the difference between the left- and right-hand sides of \eqref{eq:somos4-bilinear} is
\(-43367/114\), a contradiction.
\end{proof}

The following theorem concerns cubic continued fractions.

\begin{theorem}{\em (Conjectured in \cite[Conjecture~26]{Barry2011Narayana})}\label{thm:cubic-master}
If the generating function $F(x)$ satisfies 
\begin{align*}
 F(x)=\frac1{1-rx-sx^2-tx^3F(x)},
\end{align*}
then the shifted Hankel sequence \((H_{n+1}(F))_{n\geq0}\) is a $(t^2,-st^2)$ Somos-4 sequence.
\end{theorem}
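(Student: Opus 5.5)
The plan is to run the same pipeline used for \cref{thm:2022-unnumbered} and \cref{thm:2021-p39}. First, one application of \cref{lem:sx} reduces $H_{n+1}(F)$ to the Hankel determinants of a series of Wang--Zhang type. Then \cref{prop:wz-universal} is applied in its polynomial form, so that no nonvanishing hypotheses on $r,s,t$ are needed.

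\emph{Step 1 (Sulanke--Xin step).} Write the defining equation as
\[
F(x)=\frac{1+0\cdot x}{1-rx-sx^2+x^2(0-tx)F(x)},
\]
which is the form of \cref{lem:sx} with $(a,b,c,d,e,f)=(1,0,-r,-s,0,-t)$. Since $a=1$, the formulas for $A,B,D$ simplify immediately:
\[
A=s,\qquad B=-\bigl(-t+rs\bigr)=t-rs,\qquad D=s .
\]
The new cubic coefficient is $-(b/a)=0$. Hence $H_{n+1}(F)=H_n(G)$ for $n\ge 0$, where
\[
G(x)=\frac{s+(t-rs)x}{1-rx+sx^2+x^2(-1+0\cdot x)G(x)} .
\]
No division by a parameter occurs here, because the leading coefficient is $a=1$.

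\emph{Step 2 (apply \cref{prop:wz-universal}).} The series $G$ satisfies \eqref{eq:wz-cross-multiplied} with
\[
a_0=s,\quad b_0=t-rs,\quad c_0=-r,\quad d_0=s,\quad f_0=0 .
\]
The invariants of \cref{prop:wz-universal} are then:
\begin{itemize}
\item $\Delta=a_0(c_0+f_0)-b_0=-rs-t+rs=-t$, so $\alpha=\Delta^2=t^2$;
\item $\Gamma=s^3-s^3-rs(t-rs)-(t-rs)^2=t(rs-t)$;
\item $a_0-d_0-c_0f_0-f_0^2=s-s=0$.
\end{itemize}
The last item kills the $\Gamma$-term, so $\beta=0\cdot\Gamma-a_0\Delta^2=-st^2$. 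Thus $(H_n(G))_{n\ge0}=(H_{n+1}(F))_{n\ge0}$ satisfies the $(t^2,-st^2)$ Somos-4 identity for all $n\ge4$.

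\emph{Main obstacle.} The only delicate point is legitimacy when $s=0$ or other degenerate values occur, since then $a_0=0$ and \cref{thm:wz} is not directly applicable. This is exactly why the polynomial version \cref{prop:wz-universal} is used, working over $\mathcal A=\mathbb Z[r,s,t]$ or any specialization of it. Step 1 needs only $a=1$, so it is valid over any commutative ring. The identity therefore holds identically in $r,s,t$ and survives every specialization. The remaining work is a routine check of the coefficient computation in Step 1, which can optionally be confirmed against the first few determinants $H_1(F)=1$, $H_2(F)=s$, and so on.
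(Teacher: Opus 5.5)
Your proposal is correct and follows the paper's proof. You use the same Sulanke--Xin step with $(a,b,c,d,e,f)=(1,0,-r,-s,0,-t)$, obtaining the same series $G$ with $(a_0,b_0,c_0,d_0,f_0)=(s,t-rs,-r,s,0)$, and then apply the Wang--Zhang formulas; the only difference is cosmetic: you use the polynomial parameters of \cref{prop:wz-universal}, where the vanishing of $a_0-d_0-c_0f_0-f_0^2$ is the polynomial form of the paper's identity $(f_0-f_1)(c_0+f_0+f_1)=a_1$, so the case $s=0$ needs no separate specialization argument.
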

\begin{proof}
Apply \cref{lem:sx} with $(a,b,c,d,e,f)=(1,0,-r,-s,0,-t)$.
It gives \(H_{n+1}(F)=H_n(G)\), where
\[G(x)=\frac{s+(t-rs)x}{1-rx+sx^2-x^2G(x)}.
\]
Thus \(G(x)\) satisfies the Wang--Zhang condition with
\[
 a_0=s,\quad b_0=t-rs,\quad c_0=-r,\quad d_0=s,\quad f_0=0.
\]
A direct calculation gives
\[
 f_1=r-\frac ts,\qquad
 a_1=\frac{rt}{s}-\frac{t^2}{s^2},\qquad
 c_0+f_0+f_1=-\frac ts.
\]
Moreover \((f_0-f_1)(c_0+f_0+f_1)=a_1\).  Hence \cref{thm:wz} gives \(\alpha=t^2\) and \(\beta=-st^2\).  Specialization covers
\(s=0\). This completes the proof.
\end{proof}

\begin{corollary}{\em (Conjectured in \cite[Page 27]{Barry2021Catalan})}\label{cor:2021-p27}
Let $F(x)$ be the generating function given by
\[ F(x)=\frac1{1-x-\alpha x^2} \mathcal{C}\!\left(\frac{x^3}{(1-x-\alpha x^2)^2}\right).
\]
Then the Hankel transform \((H_{n}(F))_{n\geq 0}\) is a \((1,-\alpha)\) Somos-4 sequence.
\end{corollary}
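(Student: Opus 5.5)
The plan is to reduce the statement to \cref{thm:cubic-master} with $(r,s,t)=(1,\alpha,1)$, and then handle the unshifted index by a short direct check.

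\emph{Step 1: derive the functional equation.} Write $w=1-x-\alpha x^2$ and $F(x)=w^{-1}\mathcal{C}(x^3/w^2)$. Put $z=x^3/w^2$, so that $\mathcal{C}(z)=wF$. The Catalan relation $\mathcal{C}(z)=1+z\,\mathcal{C}(z)^2$ then becomes
\[
wF = 1 + \frac{x^3}{w^2}\,w^2F^2 = 1 + x^3F^2,
\]
that is,
\[
F(x)=\frac{1}{1-x-\alpha x^2-x^3F(x)}.
\]
Since $F(0)=1$, this identifies $F$ as the unique power-series solution, so it is exactly the cubic continued-fraction form of \cref{thm:cubic-master} with $(r,s,t)=(1,\alpha,1)$.

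\emph{Step 2: apply \cref{thm:cubic-master}.} It gives that $(H_{n+1}(F))_{n\ge0}$ is a $(t^2,-st^2)=(1,-\alpha)$ Somos-4 sequence. Translated back, the identity $H_nH_{n-4}=H_{n-1}H_{n-3}-\alpha H_{n-2}^2$ holds for all $n\ge5$.

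\emph{Step 3: the remaining case $n=4$.} Here one must check $H_4H_0=H_3H_1-\alpha H_2^2$ directly, which is where a little care is needed, but it is only a finite computation. From the functional equation the first coefficients are
\[
f_0=1,\quad f_1=1,\quad f_2=1+\alpha,\quad f_3=1+2\alpha+1=2+2\alpha,\ \ldots
\]
An alternative that avoids expanding $H_4$ by hand is to apply \cref{lem:sx} once, which gives $H_n(F)=H_{n-1}(G)$ with $G=(\alpha+(1-\alpha)x)/(1-x+\alpha x^2-x^2G)$ as in the proof of \cref{thm:cubic-master}, and then compute the first few $H_n(G)$.

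Either way, the plan is to compute $H_1,\dots,H_4$ explicitly as polynomials in $\alpha$ and verify the $n=4$ identity. I expect this to be a routine polynomial identity in $\alpha$. Any degenerate values of $\alpha$, such as $\alpha=0$, are covered by the specialization argument in \cref{rem:specialization}, working over $\mathbb{Q}(\alpha)$.
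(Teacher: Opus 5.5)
Your proposal matches the paper's proof: rewrite $F$ as $1/(1-x-\alpha x^2-x^3F)$ using $\mathcal{C}(z)=1+z\,\mathcal{C}(z)^2$, apply \cref{thm:cubic-master} with $(r,s,t)=(1,\alpha,1)$ to get the $(1,-\alpha)$ identity for $n\ge5$, and check $n=4$ directly. The finite check you left undone does go through: $H_0=H_1=1$, $H_2=\alpha$, $H_3=\alpha-1$, $H_4=-\alpha^3+\alpha-1$, so $H_4H_0=H_3H_1-\alpha H_2^2$ holds identically in $\alpha$.
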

\begin{proof}
By the definition of $F(x)$ and \(\mathcal{C}(x)=1+x\mathcal{C}(x)^2\), we have 
\[ F(x)=\frac1{1-x-\alpha x^2-x^3F(x)}.\]
\cref{thm:cubic-master} gives the recurrence for \((H_{n+1}(F))_{n\geq0}\), hence for \((H_n(F))_{n\geq0}\) at every
\(n\geq5\).  Direct expansion gives
\[
 H_0(F)=H_1(F)=1,\quad H_2(F)=\alpha,\quad
 H_3(F)=\alpha-1,\quad H_4(F)=-\alpha^3+\alpha-1,
\]
which verifies the remaining identity at \(n=4\).
\end{proof}

\begin{corollary}{\em (Conjectured in \cite[Example 14]{Barry2023Motzkin})}\label{thm:new-motzkin-9-5}
Let $F(x)$ be the generating function given by
\[
 F(x)=\frac{1}{x}\cdot \left(\mathcal{C}\!\left(\dfrac{x}{1-x-x^2}\right)-1\right).
\]
Then \(\bigl(H_{n+1}(F)\bigr)_{n\geq0}\) is a \((9,-5)\) Somos-4 sequence. 
\end{corollary}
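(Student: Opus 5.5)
The plan is to reduce $F(x)$ to the quadratic continued fraction of \cref{thm:quadratic-master}, read off the parameters there, and then pass to the shifted sequence via \cref{lem:somos-invariance}.

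\emph{Step 1: the functional equation for $F$.} Put $y=x/(1-x-x^2)$. By definition, $\mathcal{C}(y)=1+xF(x)$. Substituting this into $\mathcal{C}(y)=1+y\,\mathcal{C}(y)^2$ gives
\[
xF=\frac{x}{1-x-x^2}\,(1+xF)^2 .
\]
Dividing by $x$ and clearing the denominator yields
\[
(1-x-x^2)F=1+2xF+x^2F^2,
\]
which rearranges to
\[
F(x)=\frac{1}{1-3x-x^2-x^2F(x)}.
\]
The constant term is consistent: $F(0)=[x^1]\mathcal{C}(x)=1$. Hence $F$ is the unique power series with $F(0)=1$ solving this equation.

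\emph{Step 2: apply \cref{thm:quadratic-master}.} The equation above is \eqref{eq:quadratic-master} with $(r,s,t)=(3,1,1)$. The theorem then says that $(H_n(F))_{n\geq0}$ is an $(\alpha,\beta)$ Somos-4 sequence with
\[
\alpha=r^2t^2=9,\qquad \beta=t^2\bigl((s+t)^2-r^2t\bigr)=4-9=-5.
\]

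\emph{Step 3: shift.} By \cref{lem:somos-invariance} with $u=v=1$ and $k=1$, the shifted sequence $(H_{n+1}(F))_{n\geq0}$ is again a $(9,-5)$ Somos-4 sequence, which is the claim.

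The only step needing care is the algebra in Step 1, in particular keeping the correct branch of $\mathcal{C}$, that is, the solution with $F(0)=1$. As a sanity check, I would compute the first few coefficients, $1,4,17,\ldots$, and the first few shifted Hankel determinants, and verify the $(9,-5)$ identity numerically at $n=4$.
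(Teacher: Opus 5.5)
Your proof is correct and follows the paper's own argument. You derive $F=1/(1-3x-x^2-x^2F)$ from $\mathcal{C}(y)=1+y\,\mathcal{C}(y)^2$, specialize \cref{thm:quadratic-master} (equivalently \cref{cor:barry2010}) at $(r,s,t)=(3,1,1)$, and your explicit appeal to \cref{lem:somos-invariance} for the shift makes precise a step the paper leaves implicit.

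One slip in your sanity check: the coefficients of $F$ are $1,3,11,42,168,\ldots$, not $1,4,17,\ldots$. With the correct coefficients, $H_n(F)=1,1,2,13,97,\ldots$, and indeed $97=9\cdot 13-5\cdot 2^2$.
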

\begin{proof}
The Catalan equation \(\mathcal{C}(z)=1+z\mathcal{C}(z)^2\) gives
\[F(x)=1+3xF(x)+x^2F(x)+x^2F(x)^2=\frac1{1-3x-x^2-x^2F(x)}.
\]
Since this equation has a unique solution with constant term \(1\), this is precisely the specialization \((r,s,t)=(3,1,1)\) of 
\cref{cor:barry2010}.  The parameters there are $\alpha=(rt)^2=9$ and $\beta=t^2\bigl((s+t)^2-r^2t\bigr)=-5$.
\end{proof}

\section{Hankel determinants of generalized Bernstein arrays}\label{sec:bernstein}

We first recall the common combinatorial concept of a Riordan array, which was first introduced in~\cite{Shapiro1991}.
Let \(g(x),f(x)\in\Ring[[x]]\), \(f(0)=0\), and \(f'(0)\) is a unit.  The \emph{Riordan array} \((g(x),f(x))\) is the infinite lower-triangular matrix whose \((n,k)\)-entry is
\[r_{n,k}=[x^n]g(x)f(x)^k,\qquad n,k\ge0.\]
Furthermore, if \(g(0)\) is a unit, then \((g(x),f(x))\) is called a \emph{proper Riordan array}.
It is clear that a proper Riordan array is invertible.
The series $f(x)$ is compositionally invertible, with inverse $\bar{f}(x)$, where $f(\bar{f}(x))=x$ and $\bar{f}(f(x))=x$.
For more results on the Riordan group and applications, see \cite{LW. SSBCHMW,Cheon2017,Luzon2010,LuzonMoron2008}.

The $(n+1)$ \emph{Bernstein polynomials} of degree $n$ are the polynomials $B_{n,k}=\binom{n}{k}s^k(1-s)^{n-k}$.
We call the coefficient array $B_{n,k}(s)$ the \emph{Bernstein array}.
This is a lower triangular array.
Barry \cite{Barry2012Bernstein} proved that the Bernstein array $B_{n,k}$ is given by the Riordan array product
$$\left(\frac{1}{1-(1-s)x},\frac{sx}{1-(1-s)x}\right)=\left(\frac{1}{1-x},\frac{x}{1-x}\right) \cdot (1,sx)\cdot \left(\frac{1}{1-x},\frac{x}{1-x}\right)^{-1}.$$

Let $(g(x),f(x))$ be a proper Riordan array. Barry defines the \emph{generalized Bernstein array} as 
$$\mathbb{B}^{(g,f)}=(g(x),f(x))\cdot (1,sx)\cdot (g(x),f(x))^{-1}.$$
Let \(\bar f(x)\) be the compositional inverse of \(f(x)\). Then the first column \cite{Barry2012Bernstein} of its generalized Bernstein array has generating function
\begin{equation}\label{eq:bernstein-first-column}
 B_s(x)=\frac{g(x)}{g(\bar f(sf(x)))}.
\end{equation}

Next, we will prove various $(\alpha,\beta)$ Somos-4 conjectures involving the Hankel transform of $B_s(x)$.

\begin{theorem}{\em (Conjectured in \cite[Section 4]{Barry2012Bernstein})}\label{thm:bernstein-catalan}
If the Bernstein array $\mathbb{B}^{(g,f)}$ is generated by the Riordan array $(g(x),f(x))=(1-x,x(1-x))$,
then the Hankel transform \(H_{n+1}(B_s)\) is a
\[
 \bigl(4s^4(s-1)^2,\;-s^4(s-1)^3(3s+1)\bigr)
\]
Somos-4 sequence.
\end{theorem}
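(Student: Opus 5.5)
The plan is to make $B_s(x)$ explicit, turn it into a quadratic functional equation, peel off one Jacobi step, and land in the Wang--Zhang form of \cref{prop:wz-universal}.

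\textbf{Step 1: a quadratic equation for $B_s$.} For $f(x)=x(1-x)$ the compositional inverse is $\bar f(x)=x\,\mathcal C(x)$, the root of $y-y^2=x$ with $y(0)=0$. Put $y=\bar f(sf(x))$, so $y(1-y)=sx(1-x)$. By \eqref{eq:bernstein-first-column}, $B_s=(1-x)/(1-y)$. Substituting $1-y=(1-x)/B_s$ into $y(1-y)=sx(1-x)$ gives
\[
sxB_s^2-B_s+(1-x)=0,\qquad\text{that is,}\qquad B_s=\frac{1-x}{1-sxB_s}.
\]
This determines $B_s$ uniquely, with $B_s=1+(s-1)x+\cdots$.

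\textbf{Step 2: one Jacobi step.} Define $Q(x)$ by $1/B_s=1-(s-1)x-x^2Q(x)$. Then \cref{lem:jacobi-step} gives $H_{n+1}(B_s)=H_n(Q)$. Set $P=1/B_s$; the equation of Step 1 becomes $(1-x)P^2-P+sx=0$. Substituting $P=1-(s-1)x-x^2Q$, the terms of order $x^0$ and $x^1$ cancel, and dividing by $x^2$ leaves a relation of the shape
\[
(1+c_0x+d_0x^2)Q+x^2(\pm1+f_0x)Q^2=a_0+b_0x .
\]
The $Q^2$-coefficient is $x^2(1-x)$, which has the wrong sign for \eqref{eq:wz-cross-multiplied}. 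I would fix this by replacing $Q$ with $-Q$; this changes $H_n$ only by $(-1)^n$, which \cref{lem:somos-invariance} (or \cref{lem:scalings}) absorbs without changing the Somos parameters.

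If the resulting linear coefficient is not of the form $1+c_0x+d_0x^2$ (for example, if a cubic term or a non-unit constant term appears), I would fall back on two options. The first is to apply \cref{lem:sx} once more, exactly as in \cref{thm:cubic-master}. The second is to normalize by a scalar using \cref{lem:scalings}.

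\textbf{Step 3: compute $\alpha$ and $\beta$.} With $(a_0,b_0,c_0,d_0,f_0)$ read off from Step 2, evaluate
\[
\Delta=a_0(c_0+f_0)-b_0,\qquad \Gamma=a_0^3-a_0^2d_0+a_0b_0c_0-b_0^2,
\]
and insert them into \eqref{eq:wz-polynomial-parameters}. We expect the simplification to give $\alpha=\Delta^2=4s^4(s-1)^2$, so $\Delta=\pm2s^2(s-1)$, and $\beta=-s^4(s-1)^3(3s+1)$. Because \cref{prop:wz-universal} holds over $\mathbb Q(s)$ with no nonvanishing hypotheses, the degenerate values $s=0,1$ need no separate treatment.

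\textbf{Checks and main obstacle.} As a sanity check I would compute the first five values of $H_{n+1}(B_s)$ directly from the series and verify the $n=4$ identity. The main obstacle is Step 2: expanding $P$ carefully and putting the resulting equation into exact Wang--Zhang normal form, including the sign and scaling adjustments, since any slip there changes $\beta$.
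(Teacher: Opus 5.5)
Your route is the paper's: the quadratic $sxB_s^2-B_s+(1-x)=0$, then one Jacobi step, then the Wang--Zhang form. Your $Q$, defined by $1/B_s=1-(s-1)x-x^2Q$, is exactly the paper's $Q=(s^2-1)q$, and $H_{n+1}(B_s)=H_n(Q)$ with no geometric factor.

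However, Step 2 contains a concrete error that would derail the computation if followed. Substituting $P=1-(s-1)x-x^2Q$ into $(1-x)P^2-P+sx=0$ and dividing by $x^2$ gives
\[
(s^2-1)-(s-1)^2x-\bigl(1-2sx+2(s-1)x^2\bigr)Q+x^2(1-x)Q^2=0 .
\]
You looked only at the $Q^2$-coefficient, but the $Q$-coefficient also has constant term $-1$. Multiplying by $-1$ to normalize the linear coefficient gives
\[
\bigl(1-2sx+2(s-1)x^2\bigr)Q+x^2(-1+x)Q^2=(s^2-1)-(s-1)^2x .
\]
This is already \eqref{eq:wz-cross-multiplied} with $f_0=1$, so there is no sign problem. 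Your proposed cure would actually do harm. The substitution $Q\mapsto -Q$ flips the sign of the linear term but not of the quadratic term. After renormalizing you would get $x^2(1-x)$ as the quadratic coefficient, which is precisely the shape the Wang--Zhang theorem cannot handle. The fallbacks (a further application of \cref{lem:sx}, or a rescaling) are likewise unnecessary.

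Step 3 is stated only as an expectation, but it is the whole content of the theorem, so it must be carried out. With $(a_0,b_0,c_0,d_0,f_0)=(s^2-1,\,-(s-1)^2,\,-2s,\,2(s-1),\,1)$ one finds
\[
\Delta=-2s^2(s-1),\qquad \Gamma=s^2(s-1)^3(s+3),\qquad a_0-d_0-c_0f_0-f_0^2=s^2 .
\]
Hence $\alpha=\Delta^2=4s^4(s-1)^2$ and
\[
\beta=s^2\Gamma-a_0\Delta^2=s^4(s-1)^3\bigl((s+3)-4(s+1)\bigr)=-s^4(s-1)^3(3s+1).
\]
Using \cref{prop:wz-universal} with $\mathcal A=\mathbb Z[s]$ is legitimate and avoids the paper's appeal to \cref{thm:wz} (which needs $a_0=s^2-1\neq0$) together with \cref{rem:specialization}. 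Also, because $(H_{n+1}(B_s))_{n\geq0}$ is exactly $(H_n(Q))_{n\geq0}$, no separate check at $n=4$ is needed. With the sign flip removed and these values inserted, your argument coincides with the paper's proof.
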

\begin{proof}
For \((g(x),f(x))=(1-x,x(1-x))\), the series in \eqref{eq:bernstein-first-column} \cite[Section 4]{Barry2012Bernstein} is given by
\[ B_s(x)=\frac{1-\sqrt{1-4sx(1-x)}}{2sx},
\]
and hence
\begin{equation}\label{eq:bernstein-catalan-quadratic}
 B_s(x)=1-x+sxB_s(x)^2.
\end{equation}
Now we consider using \cref{lem:jacobi-step} with $u=s-1$ and $v=s^2-1$.
Write
\[B_s(x)=\frac1{1-(s-1)x-(s^2-1)x^2q(x)}.
\]
Substitution in \eqref{eq:bernstein-catalan-quadratic} followed by cancellation gives
\[(s^2-1)x^2(x-1)q(x)^2+\bigl(1-2sx+2(s-1)x^2\bigr)q(x)+\frac{s-1}{s+1}x-1=0.
\]
Thus \(Q(x)=(s^2-1)q(x)\) satisfies
\[
 Q(x)=\frac{(s^2-1)-(s-1)^2x}{1-2sx+2(s-1)x^2+x^2(-1+x)Q(x)}.
\]
In \cref{thm:wz}, this is \eqref{eq:wz-form} with
\[a_0=s^2-1,\quad b_0=-(s-1)^2,\quad c_0=-2s,\quad d_0=2(s-1),\quad f_0=1.
\]
The quantities in \cref{thm:wz} simplify to
\[f_1=\frac{s-1}{s+1},\qquad a_1=\frac{s^2(s-1)(s+3)}{(s+1)^2},\qquad c_0+f_0+f_1=-\frac{2s^2}{s+1}.
\]
Equations \eqref{eq:wz-alpha}--\eqref{eq:wz-beta} now give the claimed parameters $\alpha$ and $\beta$. 
By \cref{lem:jacobi-step}, we have
\[
 H_{n+1}(B_s)=(s^2-1)^nH_n(q)=H_n(Q),
\]
This completes the proof.
\end{proof}

Note that the following theorem was conjectured in the original paper to contain a typo. We state and prove the correct version.

\begin{theorem}{\em (Conjectured in \cite[Conjecture 16]{Barry2012Bernstein})}\label{thm:bernstein-c16}
If the Bernstein array $\mathbb{B}^{(g,f)}$ is generated by the Riordan array $$(g(x),f(x))=\left(\frac1{1+ax},\frac{x(1+bx)}{1+ax}\right),$$
then the Hankel transform \(H_{n+1}(B_s)\) is an \((\alpha,\beta)\) Somos-4 sequence, where
\begin{align}
\alpha={}&b^2s^4(s-1)^2(a-b)^2(a-2b)^2,\label{eq:c16-alpha}\\
\beta={}&b^3s^4(s-1)^2(a-b)^3
 \Bigl(a^2s^2+b(b-a)(3s^2-2s-1)\Bigr).
\label{eq:c16-beta}
\end{align}
\end{theorem}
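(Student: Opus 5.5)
The plan follows the template of \cref{thm:bernstein-catalan}: derive a quadratic functional equation for $B_s(x)$, peel off the first Jacobi level with \cref{lem:jacobi-step}, recognize the resulting series as being in Wang--Zhang form \eqref{eq:wz-form}, and read off $(\alpha,\beta)$ from \cref{prop:wz-universal} (equivalently \cref{thm:wz}). All computations are done over the rational function field $\mathbb Q(a,b,s)$. This removes every nonvanishing assumption in the sense of \cref{rem:specialization}.

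\emph{Step 1: the quadratic equation for $B_s$.} Put $g(x)=1/(1+ax)$ and $f(x)=x(1+bx)/(1+ax)$. Write $w=\bar f(sf(x))$, so that $f(w)=sf(x)$, that is,
\[
w(1+bw)(1+ax)=s\,x(1+bx)(1+aw).
\]
This is quadratic in $w$. Then $B_s(x)=g(x)/g(w)=(1+aw)/(1+ax)$, so $w=((1+ax)B_s-1)/a$ (work generically with $a\neq0$ and specialize afterwards). Substituting this into the quadratic for $w$ gives an explicit quadratic equation $K(x)B_s^2+R(x)B_s+T(x)=0$ with polynomial coefficients. The constant term is fixed by $B_s(0)=1$. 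I will sanity-check the first coefficients: $B_s=1+(s-1)(a-b)\,x+\cdots$.

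\emph{Step 2: one Jacobi step.} Let $u=[x^1]B_s$ and let $v$ be the $[x^2]$-coefficient of $1-1/B_s$ after removing $ux$; from the expected answer, $v$ should be of the form $b s(s-1)(a-b)\cdot(\text{const})$. Write
\[
B_s=\frac{1}{1-ux-vx^2q(x)}
\]
and substitute into the quadratic. After cancelling the common factor (as happened with $(s^2-1)$ in \cref{thm:bernstein-catalan}), I expect $Q=v\,q$ to satisfy an equation of shape \eqref{eq:wz-cross-multiplied}:
\[
(1+c_0x+d_0x^2)Q+x^2(-1+f_0x)Q^2=a_0+b_0x .
\]
The point to check is that the coefficient of $x^2Q^2$ has the form $-1+f_0x$ up to normalization, and that there is no $x^4$ term. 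This should hold because both $K$ and the denominator of $f$ have degree at most $1$ in the appropriate variables; if a constant factor $\kappa\neq1$ appears on $x^2Q^2$, rescale $Q\mapsto\kappa Q$ and use \cref{lem:scalings}. If the degrees do not fit, I would first apply \cref{lem:sx} once, as in \cref{thm:cubic-master}, to reach Wang--Zhang form. Then \cref{lem:jacobi-step} gives $H_{n+1}(B_s)=v^nH_n(q)=H_n(Q)$.

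\emph{Step 3: parameters.} With $a_0,b_0,c_0,d_0,f_0$ identified, compute
\[
\Delta=a_0(c_0+f_0)-b_0,\qquad \Gamma=a_0^3-a_0^2d_0+a_0b_0c_0-b_0^2,
\]
and then $\alpha=\Delta^2$ and $\beta=(a_0-d_0-c_0f_0-f_0^2)\Gamma-a_0\Delta^2$. These should factor to \eqref{eq:c16-alpha}--\eqref{eq:c16-beta}; in particular $\Delta=\pm b s^2(s-1)(a-b)(a-2b)$.

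The main obstacle is Steps 1--2: obtaining the quadratic in clean form and choosing the normalization of $q$ so that the Wang--Zhang shape appears exactly. The factorization of $\beta$ is routine but long. As a safeguard against the typo issue mentioned before the statement, I will check the final $(\alpha,\beta)$ numerically, for instance at $a=2$, $b=1$, $s=2$, against the first five Hankel determinants.
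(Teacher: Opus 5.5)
Your plan is correct and is essentially the paper's proof. The paper obtains $bD^2B_s^2+\bigl((a-2b)D-a^2sxE\bigr)B_s+b-a=0$ with $D=1+ax$, $E=1+bx$, and peels one Jacobi level with $v=H_2(B_s)=abs(1-s)$. It then works with $-Aq$, where $A=b(a-b)s(s-1)$, which is in Wang--Zhang form with $(a_0,b_0,c_0,d_0,f_0)=(-A,-Aa,a(1-s)+2bs,abs,0)$.

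Two of your predicted values are wrong, though neither affects the method. First, $[x^1]B_s=a(s-1)$, not $(s-1)(a-b)$. Second, $v=abs(1-s)$ has no factor $a-b$. Consequently, with $Q=vq$ the coefficient of $x^2Q^2$ is $-(a-b)/a$, so your rescaling fallback by the factor $(a-b)/a$ is actually needed. This gives $H_{n+1}(B_s)=\bigl(a/(a-b)\bigr)^nH_n(-Aq)$, and the geometric factor does not change the Somos parameters.
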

\begin{proof}
For convenience, we set \(D=1+ax\) and \(E=1+bx\). 
According to $f(\bar{f}(t))=t$, we have $f(\bar f(sf(x)))=sf(x)$.
If \(y(x)=\bar f(sf(x))\), then we have $f(y(x))=sf(x)$.
Therefore, we obtain
$$\frac{y(x)(1+by(x))}{1+ay(x)}=sf(x).$$
Thus we get
\[by(x)^2+(1-asf(x))y(x)-sf(x)=0,\qquad B_s(x)=\frac{1+ay(x)}{1+ax}.
\]
Eliminating \(y(x)\) yields the quadratic equation
\begin{equation}\label{eq:bernstein-linear-quadratic}
 bD^2B_s(x)^2+\bigl((a-2b)D-a^2sxE\bigr)B_s(x)+b-a=0.
\end{equation}
Assume that the power series expansion of $B_s(x)$ is
$$B_s(x)=B_0+B_1x+B_2x^2+\cdots.$$
According to \eqref{eq:bernstein-linear-quadratic}, we have
\begin{align*}
&b(1+ax)^2(B_0+B_1x+B_2x^2+O(x^3))
\\ &\qquad \qquad+\bigl((a-2b)(1+ax)-a^2sx(1+bx)\bigr)(B_0+B_1x+B_2x^2+O(x^3))+b-a=0.
\end{align*}
It follows from the above equation that
$$B_0=[x^0]B_s(x)=1, \qquad B_1=[x]B_s(x)=a(s-1), \qquad B_2=a(s-1)(a(s-1)-bs).$$
Therefore, we have
$$H_2(B_s)=a\,b\,s(1-s).$$

Now we set \(\eta=a\,b\,s(1-s)\). We also set
\[ B_s(x)=\frac1{1-a(s-1)x-\eta x^2q(x)}.
\]
On substituting in \eqref{eq:bernstein-linear-quadratic}, the remaining equation is
\begin{align*}
b(a-b)s(s-1)x^2q(x)^2+\bigl(1+(a(1-s)+2bs)x+absx^2\bigr)q(x)-(1+ax)=0.
\end{align*}
Let \(A=b(a-b)s(s-1)\) and \(Q(x)=-Aq(x)\).  Then we have 
\[Q(x)=\frac{-A-Aax}{1+(a(1-s)+2bs)x+absx^2-x^2Q(x)}.
\]
This satisfies the Wang--Zhang condition with
\[a_0=-A,\quad b_0=-Aa,\quad c_0=a(1-s)+2bs,\quad d_0=a\,b\,s,\quad f_0=0.
\]
In particular
\[f_1=-a,\qquad a_1=-s(a-b)(a+bs-b),\qquad c_0+f_0+f_1=-s(a-2b).
\]
Substitution into \cref{thm:wz} gives \eqref{eq:c16-alpha} and \eqref{eq:c16-beta}.  Finally, we obtain
\[
 H_{n+1}(B_s)=\eta^nH_n(q)=\left(\frac{a}{a-b}\right)^nH_n(Q).
\]
The extra factor is geometric in \(n\), so it does not change the Somos parameters. 
Degenerate parameter values follow by specialization.
\end{proof}

\begin{corollary}{\em (Conjectured in \cite[Conjecture 15]{Barry2012Bernstein})}\label{cor:bernstein-c15}
If the Bernstein array $\mathbb{B}^{(g,f)}$ is generated by the Riordan array $$(g(x),f(x))=\left(\frac1{1-x},\frac{x(1+rx)}{1-x}\right),$$
then the Hankel transform \(H_{n+1}(B_s)\) is an \((\alpha,\beta)\) Somos-4 sequence, where
\begin{align*}
\alpha&=r^2s^4(s-1)^2(r+1)^2(2r+1)^2,\\
\beta&=-s^4(s-1)^2\bigl(r(r+1)\bigr)^3\Bigl(r(r+1)(3s^2-2s-1)+s^2\Bigr).
\end{align*}
\end{corollary}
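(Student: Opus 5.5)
This corollary should be the special case \((a,b)=(-1,r)\) of \cref{thm:bernstein-c16}. Indeed, the Riordan array \(\bigl(\frac1{1-x},\frac{x(1+rx)}{1-x}\bigr)\) is exactly \(\bigl(\frac1{1+ax},\frac{x(1+bx)}{1+ax}\bigr)\) with \(a=-1\) and \(b=r\). So the first column \(B_s(x)\) in \eqref{eq:bernstein-first-column} is the same series, and \cref{thm:bernstein-c16} applies directly. It tells us that \((H_{n+1}(B_s))_{n\geq0}\) is a Somos-4 sequence with parameters \eqref{eq:c16-alpha}--\eqref{eq:c16-beta} evaluated at \(a=-1\), \(b=r\). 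Since that theorem is proved over the rational function field, and degenerate values are handled by specialization (\cref{rem:specialization}), no nonvanishing hypothesis on \(r\) or \(s\) is needed.

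It remains to simplify the parameters. Substituting \(a=-1\), \(b=r\) gives
\[
a-b=-(r+1),\qquad a-2b=-(2r+1),\qquad b(b-a)=r(r+1),\qquad a^2=1.
\]
Hence
\[
\alpha=r^2s^4(s-1)^2(r+1)^2(2r+1)^2 .
\]
For \(\beta\) we have \(b^3(a-b)^3=-r^3(r+1)^3=-\bigl(r(r+1)\bigr)^3\). The last factor in \eqref{eq:c16-beta} becomes \(s^2+r(r+1)(3s^2-2s-1)\). Together these give
\[
\beta=-s^4(s-1)^2\bigl(r(r+1)\bigr)^3\Bigl(r(r+1)(3s^2-2s-1)+s^2\Bigr),
\]
which is exactly the claimed formula.

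There is no real obstacle here. The one point to check is that the parametrization used in \cref{thm:bernstein-c16} matches the one in this statement, including the sign convention \(1+ax\) versus \(1-x\). If desired, one can confirm this by computing \(H_2(B_s)\) from \(\eta=abs(1-s)\), which gives \(-rs(1-s)\) here, and comparing it with a direct expansion of \(B_s\) for this array.
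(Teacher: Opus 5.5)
Your proposal is correct and is exactly the paper's argument: the corollary is obtained by substituting $a=-1$, $b=r$ into \cref{thm:bernstein-c16}. Your simplification of $\alpha$ and $\beta$, using $a-b=-(r+1)$, $a-2b=-(2r+1)$ and $b(b-a)=r(r+1)$, checks out.
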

\begin{proof}
The proof is completed by taking \(a=-1\) and \(b=r\) in \cref{thm:bernstein-c16}.
\end{proof}

The printed value of \(\beta\) in \cite[Conjecture 16]{Barry2012Bernstein} has a minus sign in front and contains \(b-ab\) inside the last parenthesis.  Both are incompatible with \cite[Conjecture 15]{Barry2012Bernstein}.  For example, at \((a,b,s)=(-1,1,2)\), the actual determinants give \(\beta=-2304\), whereas the printed formula gives \(+2304\).
The corrected expression is \eqref{eq:c16-beta}.

\begin{corollary}{\em (Conjectured in \cite[Page 12]{Barry2012Bernstein})}\label{thm:bernstein-c171}
If the Bernstein array $\mathbb{B}^{(g,f)}$ is generated by the Riordan array $$(g(x),f(x))=\left(\frac1{1-x},\frac{x(1+x)}{1-x}\right),$$
then the Hankel transform \(H_{n+1}(B_s)\) is an \((36s^4(s-1)^2,-8s^4(s-1)^2(7s^2-4s-2))\) Somos-4 sequence.
\end{corollary}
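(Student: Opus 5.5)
The plan is to obtain this statement as the specialization \(r=1\) of \cref{cor:bernstein-c15}, which is itself the case \((a,b)=(-1,r)\) of \cref{thm:bernstein-c16}. With \(a=-1\) and \(b=1\), the Riordan array \(\left(\frac1{1+ax},\frac{x(1+bx)}{1+ax}\right)\) of \cref{thm:bernstein-c16} becomes exactly \(\left(\frac1{1-x},\frac{x(1+x)}{1-x}\right)\). The first column \(B_s(x)\) in \eqref{eq:bernstein-first-column} is therefore the same series, and the Somos-4 property of \((H_{n+1}(B_s))_{n\geq0}\) is inherited directly.

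The next step is to compute the parameters. Substituting \(r=1\) into the formulas of \cref{cor:bernstein-c15} gives
\[
\alpha=1^2\cdot s^4(s-1)^2\cdot 2^2\cdot 3^2=36s^4(s-1)^2 .
\]
For \(\beta\), note that \(\bigl(r(r+1)\bigr)^3=8\) and \(r(r+1)(3s^2-2s-1)+s^2=2(3s^2-2s-1)+s^2=7s^2-4s-2\). Hence
\[
\beta=-8s^4(s-1)^2(7s^2-4s-2),
\]
as claimed. As a consistency check, the same values follow from \eqref{eq:c16-alpha}--\eqref{eq:c16-beta} with \((a,b)=(-1,1)\). There, \((a-b)^2(a-2b)^2=4\cdot 9\), \(b^3(a-b)^3=-8\), and the last factor is \(s^2+(-2)(3s^2-2s-1)\cdot(-1)\), which again equals \(7s^2-4s-2\).

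The only point needing care, and I expect it to be the main one, is degeneracy. In the proof of \cref{thm:bernstein-c16}, the Wang--Zhang parameters involve \(A=b(a-b)s(s-1)\) and \(\eta=abs(1-s)\), which vanish at \(s=0\) or \(s=1\). The relation \(H_{n+1}(B_s)=\left(\frac{a}{a-b}\right)^nH_n(Q)\) is used there with the factor \(\frac{a}{a-b}=\frac12\), which is nonsingular. I would argue over the rational function field \(\mathbb Q(s)\), as in \cref{rem:specialization}. By \cref{prop:wz-universal}, the identity \(H_{n+1}H_{n-3}-\alpha H_nH_{n-2}-\beta H_{n-1}^2=0\) holds as a polynomial identity in \(s\), so it survives specialization to every value of \(s\).

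Finally, as a sanity check, I would compute the first few terms \(H_1(B_s),\dots,H_5(B_s)\) directly from the quadratic \eqref{eq:bernstein-linear-quadratic} with \((a,b)=(-1,1)\). I would then confirm the \(n=4\) instance of the recurrence with the stated \((\alpha,\beta)\).
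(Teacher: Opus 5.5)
Your proposal is correct and matches the paper's proof, which is just the specialization $r=1$ of \cref{cor:bernstein-c15}, i.e.\ $(a,b)=(-1,1)$ in \cref{thm:bernstein-c16}. Your parameter arithmetic and your handling of the degenerate values $s\in\{0,1\}$ (polynomial identities in $s$, as in \cref{rem:specialization}) are sound additions to that one-line argument.
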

\begin{proof}
The assertion is the specialization \(r=1\) in \cref{cor:bernstein-c15}.
\end{proof}

\begin{theorem}{\em (Conjectured in \cite[Conjecture 17]{Barry2012Bernstein})}\label{thm:bernstein-c172}
If the Bernstein array $\mathbb{B}^{(g,f)}$ is generated by the Riordan array $$(g(x),f(x))=\left(\frac1{1+ax+bx^2},\frac{x}{1+ax+bx^2}\right),$$
then the Hankel transform of $B_s(x)$ is an \((\alpha,\beta)\) Somos-4 sequence, where
\begin{align*}
\alpha&=a^2b^2s^4(s-1)^2,\\
\beta&=-b^3s^4(s-1)^2\bigl(s^2(a^2-b)-2bs-b\bigr).
\end{align*}
\end{theorem}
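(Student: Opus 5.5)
The plan is to reduce this to \cref{thm:quadratic-master}. It should apply directly, since the first column $B_s(x)$ of the generalized Bernstein array turns out to satisfy a Jacobi-type quadratic equation of exactly the form \eqref{eq:quadratic-master}.

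\emph{Step 1: find the functional equation for $B_s$.} Write $D(x)=1+ax+bx^2$, so that $g=1/D$ and $f=x/D$. Set $y(x)=\bar f(sf(x))$ as in \eqref{eq:bernstein-first-column}. Then $f(y)=sf(x)$, that is,
\[
\frac{y}{1+ay+by^2}=\frac{sx}{D(x)}.
\]
Since $g=f/x$, we get
\[
B_s(x)=\frac{g(x)}{g(y)}=\frac{1+ay+by^2}{D(x)}=\frac{y}{x}\cdot\frac{1}{sf(x)}\cdot\frac{x}{D(x)}=\frac{y}{sx},
\]
so $y=sxB_s(x)$. Substituting this into $y\,D(x)=sx(1+ay+by^2)$ and cancelling $sx$ gives
\[
D(x)B_s=1+asxB_s+bs^2x^2B_s^2 .
\]
This is equivalent to
\[
B_s(x)=\frac{1}{1-a(s-1)x-(-b)x^2-bs^2x^2B_s(x)} .
\]
Both $B_s$ and the solution of this equation are power series with constant term $1$, and such a solution is unique, so the two coincide.

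\emph{Step 2: apply \cref{thm:quadratic-master}.} Take $(r,s',t)=(a(s-1),-b,bs^2)$, writing $s'$ to avoid a clash with the Bernstein parameter $s$. The theorem gives $\alpha=r^2t^2=a^2b^2s^4(s-1)^2$. For $\beta$ it gives
\[
\beta=t^2\bigl((s'+t)^2-r^2t\bigr)=b^2s^4\bigl(b^2(s^2-1)^2-a^2b s^2(s-1)^2\bigr)=b^3s^4(s-1)^2\bigl(b(s+1)^2-a^2s^2\bigr),
\]
and this equals $-b^3s^4(s-1)^2\bigl(s^2(a^2-b)-2bs-b\bigr)$ as claimed. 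Because \cref{thm:quadratic-master} concerns the full sequence $(H_n(B_s))_{n\ge0}$, no shift or initial-value check is needed.

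\emph{Main obstacle and degenerate cases.} The only real risk is the elimination in Step 1, specifically the identity $B_s=y/(sx)$, which relies on $g=f/x$. I would confirm it by expanding a few coefficients, for instance $[x]B_s=a(s-1)$, against the Riordan-array definition. Degenerate values such as $b=0$ or $s\in\{0,1\}$ are covered by the specialization principle of \cref{rem:specialization}, because both sides of the Somos identity are polynomials in $a,b,s$.
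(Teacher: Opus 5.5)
Your proposal is correct and is essentially the paper's proof. The paper derives the same equation $B_s=1/\bigl(1+a(1-s)x+bx^2-bs^2x^2B_s\bigr)$ through $y=sxB_s$. It then applies the Wang--Zhang condition to $Q=bs^2B_s$ with $(a_0,b_0,c_0,d_0,f_0)=(bs^2,0,a(1-s),b,0)$, which is exactly the computation inside \cref{thm:quadratic-master} for your choice $(r,s',t)=(a(s-1),-b,bs^2)$.
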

\begin{proof}
Let \(P(x)=1+ax+bx^2\). Then \(g(x)=1/P(x)\) and \(f(x)=x/P(x)\). By \eqref{eq:bernstein-first-column}, we have
\[B_s(x)=\frac{g(x)}{g(\bar f(sf(x)))}.
\]
Set \(y=\bar f(sf(x))\). Since \(\bar f\) is the compositional inverse of \(f\), we have \(f(y)=s f(x)\), i.e.,
\[\frac{y}{P(y)}=s\frac{x}{P(x)}.
\]
Cross-multiplying gives \(yP(x)=sxP(y)\), hence $y=sx\frac{P(y)}{P(x)}$.
But from 
$$B_s(x)=\frac{g(x)}{g(y)}=\frac{P(y)}{P(x)},$$ we get \(y=sxB_s(x)\). On the other hand, \(B_s(x)=P(y)/P(x)\) implies \(B_s(x)P(x)=P(y)\). Substituting \(y=sxB_s(x)\) yields
\[
B_s(x)(1+ax+bx^2)=1+a(sxB_s(x))+b(sxB_s(x))^2.
\]
By expanding and rearranging the above equation, we obtain
\[
B_s(x)+axB_s(x)+bx^2B_s(x)=1+asxB_s(x)+bs^2x^2B_s(x)^2.
\]
Thus
\[
1=B_s(x)\bigl(1+a(1-s)x+bx^2-bs^2x^2B_s(x)\bigr).
\]
The first-column generating function in \eqref{eq:bernstein-first-column} is the
unique solution of
\begin{equation}\label{eq:bernstein-quadratic-denominator}
 B_s(x)=\frac1{1+a(1-s)x+bx^2-bs^2x^2B_s(x)}.
\end{equation}
Set \(Q(x)=bs^2B_s(x)\).  Equation\eqref{eq:bernstein-quadratic-denominator} becomes
\[Q(x)=\frac{bs^2}{1+a(1-s)x+bx^2-x^2Q(x)}.
\]
According to \cref{thm:wz}, we obtain
\[a_0=bs^2,\quad b_0=0,\quad c_0=a(1-s),\quad d_0=b,\quad f_0=f_1=0,\quad a_1=b(s^2-1).
\]
The result follows immediately from \cref{thm:wz}; the factor \(H_n(Q)=(bs^2)^nH_n(B_s)\) is geometric.
This completes the proof.
\end{proof}

\begin{corollary}{\em (Conjectured in \cite[Page 11]{Barry2012Bernstein})}\label{thm:bernstein-c173}
If the Bernstein array $\mathbb{B}^{(g,f)}$ is generated by the Riordan array $$(g(x),f(x))=\left(\frac1{1+x+x^2},\frac{x}{1+x+x^2}\right),$$
then the Hankel transform of $B_s(x)$ is an \((s^4(s-1)^2,s^4(s-1)^2(2s+1))\) Somos-4 sequence.
\end{corollary}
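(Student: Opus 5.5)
The plan is to obtain this as the specialization \(a=b=1\) of \cref{thm:bernstein-c172}. With \(a=b=1\), the Riordan array there is exactly \(\bigl(\frac1{1+x+x^2},\frac{x}{1+x+x^2}\bigr)\). So the first-column series \(B_s(x)\) in \eqref{eq:bernstein-first-column} is the same series in both results. By \eqref{eq:bernstein-quadratic-denominator} it is the unique power series with constant term \(1\) satisfying
\[
B_s(x)=\frac{1}{1+(1-s)x+x^2-s^2x^2B_s(x)}.
\]
Since both statements concern the unshifted Hankel transform \((H_n(B_s))_{n\geq0}\), no index shift or use of \cref{lem:somos-invariance} should be needed.

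The only computation is to substitute \(a=b=1\) into the parameters of \cref{thm:bernstein-c172}. This gives \(\alpha=a^2b^2s^4(s-1)^2=s^4(s-1)^2\). It also gives
\[
\beta=-s^4(s-1)^2\bigl(s^2(1-1)-2s-1\bigr)=s^4(s-1)^2(2s+1),
\]
which are the claimed values.

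The one point to check is that the specialization is legitimate even where \(a_0=bs^2=s^2\) vanishes, i.e.\ at \(s=0\). \Cref{thm:bernstein-c172} was derived over the rational function field, and \cref{prop:wz-universal} removes all nonvanishing hypotheses. Hence the Somos-4 identity holds as a polynomial identity in \(s\) and specializes to every value, including degenerate ones. I expect no genuine obstacle beyond this bookkeeping.
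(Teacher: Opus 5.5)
Your proposal is correct and matches the paper's proof, which is exactly the specialization $a=b=1$ in \cref{thm:bernstein-c172}. Your substitution giving $\alpha=s^4(s-1)^2$ and $\beta=s^4(s-1)^2(2s+1)$ is right, and so is your handling of degenerate values of $s$: work over $\mathbb{Q}(s)$ and specialize the resulting polynomial identity.
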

\begin{proof}
The assertion is the specialization \(a=b=1\) in \cref{thm:bernstein-c172}.
\end{proof}

\section{A-matrix Riordan arrays}\label{Section-Five}

Recall that a lower triangular array $(t_{n,k})_{0\leq n,k\leq \infty}$. It is a Riordan array if and only if there exists a sequence $a_n$, $n\geq 0$, such that 
$$t_{n,k}=\sum_{i=0}^{\infty} a_i t_{n-1,k-1+i}.$$
Because the matrix is lower triangular, this sum is actually a finite sum.

In fact, we have the following necessary and sufficient condition \cite{HeTianxiao,Barry2019AMatrix}.
A lower-triangular array \( (t_{n,k})_{0 \leq n,k \leq \infty} \) is a Riordan array if and only if there exists another array \( A = (a_{i,j})_{i,j \in \mathbb{N}} \) with \( a_{0,0} \neq 0 \), and a sequence \( (\rho_j)_{j \in \mathbb{N}} \) such that
\[
t_{n+1,k+1} = \sum_{i \geq 0} \sum_{j \geq 0} a_{i,j} t_{n-i,k+j} + \sum_{j \geq 0} \rho_j t_{n+1,k+j+2}.
\]
The matrix $A$ is called the $A$-matrix.
For a given Riordan array $(g(x),f(x))$, the relationship between $f(x)$ and the pair $(A,\rho)$ is 
\begin{align}\label{Equation-F-A-RHO}
\frac{f(x)}{x}=\sum_{i\geq 0}x^i R^{(i)}(f(x))+\frac{f(x)^2}{x}\rho(f(x)),
\end{align}
where $R^{(i)}$ is the generating series of the $i$-th row of $A$, and $\rho(x)$ is the generating series of the sequence $\rho_n$.

Next, for a given pair $(A,\rho)$, we consider the Hankel transform of $f(x)/x$ given by the associated Equation \eqref{Equation-F-A-RHO}.

\begin{theorem}{\em (Conjectured in \cite[Conjecture 6]{Barry2019AMatrix})}\label{thm:amatrix-c6}
Let\[
 A=\begin{pmatrix}1&a&b\\1&c&d\end{pmatrix} \quad\text{and}\quad \rho_n=0\quad \text{for all}\quad n.
\]
The Hankel transform of $f(x)/x$  associated with Equation \eqref{Equation-F-A-RHO} is an \((\alpha,\beta)\) Somos-4 sequence with
\begin{align*}
\alpha={}&(b+ab+d)^2\\
\beta={}&b^4-b^3(2+3a+a^2-2c)+b(a+a^2-ac-2d)d+(1+a-c)d^2
\\ &\quad -b^2(c+ac-c^2+2d+3ad).
\end{align*}
\end{theorem}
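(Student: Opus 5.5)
The plan is to turn the $A$-matrix relation into an algebraic equation of the Sulanke--Xin type (\cref{lem:sx}), apply one Xin step to reach the Wang--Zhang form, and then read off $(\alpha,\beta)$ from \cref{prop:wz-universal}.

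\textbf{Functional equation.} The two rows of $A$ give $R^{(0)}(y)=1+ay+by^2$ and $R^{(1)}(y)=1+cy+dy^2$, and $\rho=0$. With $F(x)=f(x)/x$, \eqref{Equation-F-A-RHO} reads
\[
F=1+axF+bx^2F^2+x\bigl(1+cxF+dx^2F^2\bigr),
\]
that is,
\[
F(x)=\frac{1+x}{1-ax-cx^2-x^2(b+dx)F(x)} .
\]
This is exactly the shape of \cref{lem:sx} with $(a,b,c,d,e,f)\mapsto(1,1,-a,-c,-b,-d)$. Since the leading coefficient is $1$, all divisions in \cref{lem:sx} are harmless, and we get $H_n(F)=H_{n-1}(G)$ for $n\ge1$. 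Here
\[
G(x)=\frac{A+Bx}{1-ax+Dx^2+x^2(-1-x)G(x)},
\]
where the formulas of \cref{lem:sx} give
\[
A=b+c-a-1,\qquad B=a^2+2a+1+d-ac-c,\qquad D=c-2a-2.
\]

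\textbf{Wang--Zhang step.} The series $G$ satisfies \eqref{eq:wz-form} with
\[
a_0=A,\quad b_0=B,\quad c_0=-a,\quad d_0=D,\quad f_0=-1.
\]
I will use the polynomial form of \cref{prop:wz-universal} rather than \cref{thm:wz}, so that no assumption $A\neq0$ is needed. The first parameter comes out quickly:
\[
\Delta=A(c_0+f_0)-B=-(a+1)(b+c-a-1)-(a^2+2a+1+d-ac-c)=-(b+ab+d),
\]
so $\alpha=\Delta^2=(b+ab+d)^2$, as claimed. For $\beta$ I will compute
\[
\Gamma=A^3-A^2D+ABc_0-B^2,\qquad \beta=(A-D-c_0f_0-f_0^2)\Gamma-A\Delta^2 .
\]
Here $A-D-c_0f_0-f_0^2=b+c-a-1-c+2a+2-a-1=b$, which simplifies things considerably: $\beta=b\Gamma-A(b+ab+d)^2$. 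Expanding this polynomial in $a,b,c,d$ and matching it against the stated $\beta$ is the only laborious step. It is routine but error-prone, and I would check it by computer algebra or by comparing several numerical specializations.

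\textbf{Index shift.} \cref{prop:wz-universal} gives the recurrence for $(H_m(G))_{m\ge0}$, hence for $(H_n(F))$ at every $n\ge5$ via $H_n(F)=H_{n-1}(G)$. The remaining identity at $n=4$, namely
\[
H_4(F)H_0(F)=\alpha H_3(F)H_1(F)+\beta H_2(F)^2,
\]
must be verified directly. To do this, I will compute $f_0,\dots,f_6$ from the functional equation and evaluate $H_1,\dots,H_4$ (equivalently $H_0(G),\dots,H_3(G)$, using $H_1(F)=1$). I expect this initial-value check, together with the expansion of $\beta$, to be the main obstacle. The structural part of the argument is immediate once the functional equation is in Sulanke--Xin form.
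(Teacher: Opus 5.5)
Your proposal is correct, and your intermediate data check out. The Sulanke--Xin step gives $A=b+c-a-1$, $B=a^2+2a+1+d-ac-c$, $D=c-2a-2$. You also have $\Delta=-(b+ab+d)$ and $a_0-d_0-c_0f_0-f_0^2=b$. The expression $b\Gamma-A\Delta^2$ does expand to the stated $\beta$; the identity $\Gamma+\Delta^2=A\,(b^2+bc+a^2b+2ab+b+ad+2d)$ makes this quick.

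Your route differs from the paper's in one concrete way. The paper does not use \cref{lem:sx} here. It sets $Q=b\,f(x)/x$, which is already in Wang--Zhang form with $a_0=b_0=b$, $c_0=-a$, $d_0=-c$, $f_0=-d/b$. Since $H_n(Q)=b^nH_n(f/x)$ involves no index shift, \cref{thm:wz} gives the identity for every $n\ge 4$ at once. The price is the division by $b$, which is covered by \cref{rem:specialization}.

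Your $G$ is exactly the first Xin iterate of that $Q$: its $A_1=b+c-a-1$, $F_1=-1$, $D_1=c-2a-2$. So you are running the paper's argument one step further along the orbit. This buys you a division-free, purely polynomial argument. It costs you the first Somos identity, which is why you need the separate $n=4$ check.

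That check need not be a brute-force expansion of $H_4$. It is equivalent to $H_3(G)=\alpha H_2(G)+\beta H_1(G)^2$, i.e.\ $A_0A_1^2A_2=\alpha A_1+\beta$ for the orbit of $G$. This is precisely the case $j=1$ of \eqref{eq:wz-A-somos} in Step~4 of the proof of \cref{prop:wz-universal}. It holds in the universal ring and then specializes.
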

\begin{proof}
To find $f(x)$, we solve the equation
$$\frac{f(x)}{x}=1+af(x)+bf(x)^2+x(1+cf(x)+df(x)^2).$$
Then we obtain
\[
f(x) = \frac{1 - ax - cx^2 - \sqrt{1 - 2ax - (a^2 - 2(2b + c))x^2 + 2(ac - 2(b + d))x^3 + (c^2 - 4d)x^4}}{2x(dx + b)}.
\]
It follows that
\[
G(x)=\frac{f(x)}{x} = \frac{1 + x}{1 - ax - cx^2} \mathcal{C} \left( \frac{x^2(1 + x)(b + dx)}{(1 - ax - cx^2)^2} \right),
\]
where \(\mathcal{C}(x)\) is the Catalan generating function.
Using the identity \(\mathcal{C}(x)=1+x\mathcal{C}(x)^2\), we have 
\begin{equation}\label{eq:amatrix-c6-quadratic}
 G(x)=\frac{1+x}{1-ax-cx^2-x^2(b+dx)G(x)}.
\end{equation}
Set \(Q(x)=bG(x)\). Equation \eqref{eq:amatrix-c6-quadratic} becomes
\[
 Q(x)=\frac{b+bx}{1-ax-cx^2+x^2(-1-(d/b)x)Q(x)}.
\]
Thus, in \cref{thm:wz}, we get
\[a_0=b,\quad b_0=b,\quad c_0=-a,\quad d_0=-c,\quad f_0=-\frac db.
\]
The auxiliary quantities are
\[ f_1=-1,\qquad a_1=-a+b+c-1,\qquad c_0+f_0+f_1=-\frac{ab+b+d}{b}.
\]
Substitution in \cref{thm:wz} gives the displayed parameters after expansion. Since \(H_n(Q)=b^nH_n(G)\), the parameters are unchanged. This completes the proof.
\end{proof}

For the specialization \((a,b,c,d)=(0,1,r,1)\), \cref{thm:amatrix-c6} gives \((\alpha,\beta)=(4,r^2-4)\).

We next turn to the notation \(\rho_n=0^n\).  It means \(\rho_0=1\) and \(\rho_n=0\) for \(n>0\), rather than the identically zero sequence.  

\begin{theorem}{\em (Conjectured in \cite[Conjecture 14]{Barry2019AMatrix})}\label{thm:amatrix-c14}
Let\[
 A=\begin{pmatrix}1&A&B\\1&C&D\end{pmatrix} \quad\text{and}\quad \rho_n=0^n\quad \text{for all}\quad n.
\]
The shifted Hankel transform of $(H_{n+1}(f(x)/x))_{n\geq 0}$ associated with Equation \eqref{Equation-F-A-RHO} is an \((\alpha,\beta)\) Somos-4 sequence with
\begin{align*}
\alpha={}&\bigl(4+A^2+3B+A(4+B)+C+D\bigr)^2\\
\beta={}&-16-A^5+B^4-3A^4(3+B)+2B^3(-2+C)-8C-4C^2-C^3\\
&+B^2(-28-C+C^2-8D)-8D-6CD-2C^2D-D^2-CD^2\\
&-A^3(32+23B+3B^2+C+2D)-2B\bigl(20+C^2+9D+D^2+3C(2+D)\bigr)\\
&-A^2\bigl(56+18B^2+B^3+10D+C(6+D)+B(66+C+5D)\bigr)\\
&-A\bigl(48+3B^3+2C^2+16D+D^2+B^2(38-2C+3D)\\
&\qquad +C(12+5D)+B(84-C^2+19D+C(8+D))\bigr).
\end{align*}
\end{theorem}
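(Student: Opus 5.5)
The plan is to reduce the generating function $G(x)=f(x)/x$ to the Wang--Zhang form \eqref{eq:wz-form}, using one Jacobi step (\cref{lem:jacobi-step}) and, if needed, one Sulanke--Xin step (\cref{lem:sx}). We then read off $(\alpha,\beta)$ from the polynomial formulas of \cref{prop:wz-universal}, so that no nonvanishing hypotheses arise.

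\emph{Step 1: the functional equation.} With $\rho_n=0^n$ we have $\rho(x)=1$, so the extra term in \eqref{Equation-F-A-RHO} is $f(x)^2/x = xG(x)^2$. The rows of the $A$-matrix give $R^{(0)}(y)=1+Ay+By^2$ and $R^{(1)}(y)=1+Cy+Dy^2$. Substituting $f=xG$, equation \eqref{Equation-F-A-RHO} becomes
\[
G=1+x+(Ax+Cx^2)G+(x+Bx^2+Dx^3)G^2 .
\]
Unlike \eqref{eq:amatrix-c6-quadratic}, this contains a term $xG^2$ of order one in $x$, so it is not yet of the form treated by \cref{lem:sx}. From the equation, $g_0=1$ and $g_1=A+2$.

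\emph{Step 2: one Jacobi step.} Write
\[
G(x)=\frac{1}{1-(A+2)x-x^2q(x)}.
\]
By \cref{lem:jacobi-step}, $H_{n+1}(G)=H_n(q)$, so the shifted transform in the statement is exactly $(H_n(q))_{n\ge0}$. Substituting $G=1/(1-ux-x^2q)$ with $u=A+2$ into the quadratic and multiplying by $(1-ux-x^2q)^2$ gives a polynomial identity in $q$. The constant term in $x$ and the coefficient of $x$ cancel by the choice of $u$, so one can divide by $x^2$. The result should be a quadratic in $q$ of the shape
\[
(1+c x+d x^2)\,q+x^2(e+f'x)\,q^2=a+bx,
\]
with $a=H_2(G)$ and with $a,b,c,d,e,f'$ polynomials in $A,B,C,D$. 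The explicit computation of these coefficients is the first thing I would carry out carefully, since the two degree-$\le 1$ cancellations are what make the reduction work.

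\emph{Step 3: reaching the Wang--Zhang form and reading off the parameters.} If $e=-1$ after a scaling $Q=\kappa q$ (as in \cref{thm:amatrix-c6}, where $Q=bG$), then \cref{prop:wz-universal} applies directly. Otherwise I would apply \cref{lem:sx} once, which always outputs a series whose $e$-coefficient is $-1$. This gives $H_n(q)=a^nH_{n-1}(Q)$, and by \cref{lem:somos-invariance} the shift and the geometric factor do not change the Somos parameters. With $(a_0,b_0,c_0,d_0,f_0)$ read off from $Q$, I would form
\[
\Delta=a_0(c_0+f_0)-b_0,\qquad \Gamma=a_0^3-a_0^2d_0+a_0b_0c_0-b_0^2,
\]
and then $\alpha=\Delta^2$ and $\beta=(a_0-d_0-c_0f_0-f_0^2)\Gamma-a_0\Delta^2$. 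A scaling $Q=\kappa q$ multiplies $\alpha$ by $\kappa^{-6}$ and $\beta$ by $\kappa^{-8}$ in the Wang--Zhang output, or equivalently leaves the Somos relation for $H_n(q)$ unchanged after rescaling; I would track this so that the final parameters are the normalized ones. I expect $\Delta$ to reduce, up to sign, to $4+A^2+3B+A(4+B)+C+D$, matching the claimed $\alpha$.

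\emph{Main obstacle.} The hard part is the algebraic bookkeeping: obtaining the correct quadratic for $q$ (or for $Q$ after the Sulanke--Xin step), and then expanding $\beta$ into the long degree-five polynomial in the statement. I would do this expansion with computer algebra. As a consistency check, I would confirm the result against the first five or six Hankel determinants computed directly from the series for specific numerical values, for example $A=B=C=D=0$ and $A=B=C=D=1$.

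Because every step is justified over the field $\mathbb Q(A,B,C,D)$, the specialization argument of \cref{prop:wz-universal} and \cref{rem:specialization} removes any concern about intermediate denominators such as $a=H_2(G)$ vanishing. This also covers the first relation at $n=4$, since the identity holds as a polynomial identity.
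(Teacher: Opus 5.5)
Your plan is correct and is essentially the paper's proof: the single Jacobi step with $p=A+2$ already produces the Wang--Zhang form $q=\frac{(B+C)+(p(p-C)+D)x}{1-px+(C-2p)x^2+x^2(-1-x)q}$, so neither a rescaling nor a Sulanke--Xin step is needed, and $\Delta=-(4+A^2+3B+A(4+B)+C+D)$ as you expected. One correction to your unused contingency branch: a constant rescaling $Q=\kappa q$ only introduces the geometric factor $\kappa^n$, which leaves $(\alpha,\beta)$ unchanged rather than multiplying them by $\kappa^{-6},\kappa^{-8}$ (those factors come from quadratic-exponent normalizations such as $b^{-n^2+2n}$).
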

\begin{proof}
By solving the equation
\[
\frac{f(x)}{x} = 1 + Af(x) + Bf(x)^2 + x(1 + Cf(x) + Df(x)^2) + \rho_0 \frac{f(x)^2}{x},
\]
we find that the generating function is
\begin{align*}
 F(x)=\frac{f(x)}{x}=\frac{1+x}{1-Ax-Cx^2} \mathcal{C}\!\left(\frac{x(1+x)(1+Bx+Dx^2)}{(1-Ax-Cx^2)^2}\right),
\end{align*}
and hence
\[
 F(x)=\frac{1+x}{1-Ax-Cx^2-x(1+Bx+Dx^2)F(x)}.
\]
Set \(p=A+2\), \(R(x)=1/F(x)\), and define \(Q(x)\) by
\begin{equation}\label{eq:amatrix-c14-jacobi}
 R(x)=1-px-x^2Q(x).
\end{equation}
By eliminating \(F(x)\) and \(R(x)\), we obtain
\begin{equation}\label{eq:amatrix-c14-Q}
 Q(x)=\frac{q+\ell x}{1-px+(C-2p)x^2+x^2(-1-x)Q(x)},
 \qquad q=B+C,\quad \ell=p(p-C)+D.
\end{equation}
By \eqref{eq:amatrix-c14-jacobi} and \cref{lem:jacobi-step}, we obtain 
\[H_{n+1}(F)=H_n(Q).
\]
Equation \eqref{eq:amatrix-c14-Q} is Equation \eqref{eq:wz-form} with
\[a_0=q,\quad b_0=\ell,\quad c_0=-p,\quad d_0=C-2p,\quad f_0=-1.
\]
Thus we have 
\[f_1=-\frac{\ell}{q},\qquad a_1=q-C+2p-\frac{p\ell}{q}-\frac{\ell^2}{q^2}.
\]
If
\[L=(p+1)q+\ell=4+A^2+3B+A(4+B)+C+D,
\]
then \(c_0+f_0+f_1=-L/q\).  Equations
\eqref{eq:wz-alpha}--\eqref{eq:wz-beta} now give \(\alpha=L^2\);
expanding the second one gives exactly the displayed polynomial \(\beta\).
The specialization principle in \cref{rem:specialization} covers \(q=0\).
\end{proof}

\begin{corollary}{\em (Conjectured in \cite[Section 4]{Barry2019Pseudo})}\label{thm:pseudo-tail}
Let $g(x)$ be the power series defined as follows:
\[
g(x)=\frac{1}{1 - ax - bx^2} \mathcal{C} \left( \frac{-x^2(b + cx)}{(1 - ax - bx^2)^2} \right).
\]
Then the Hankel transform of \(g(x)\) forms an $\bigl((ab+c)^2,\;b(ab+c)^2\bigr)$ Somos-4 sequence.
\end{corollary}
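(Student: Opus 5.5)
The plan is to put $g(x)$ into the Wang--Zhang form \eqref{eq:wz-form} and read off the parameters from \cref{prop:wz-universal}. Put $P(x)=1-ax-bx^2$ and $z=-x^2(b+cx)/P(x)^2$, so that $g=\mathcal C(z)/P$. Then $\mathcal C(z)=P(x)g(x)$, and the Catalan relation $\mathcal C(z)=1+z\,\mathcal C(z)^2$ becomes
\[
P(x)g(x)=1-x^2(b+cx)g(x)^2,\qquad\text{that is,}\qquad g(x)=\frac{1}{1-ax-bx^2+x^2(b+cx)g(x)}.
\]
Since $g(0)=1$, $g$ is the unique power series solution of this equation.

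Next I will normalize the quadratic coefficient to $-1$. Work over $\mathbb Q(a,b,c)$ and set $Q(x)=-b\,g(x)$. Then $x^2(b+cx)g=x^2\bigl(-1-\tfrac cb x\bigr)Q$, so
\[
Q(x)=\frac{-b}{1-ax-bx^2+x^2\bigl(-1-\tfrac cb x\bigr)Q(x)}.
\]
This is \eqref{eq:wz-form} with
\[
a_0=-b,\qquad b_0=0,\qquad c_0=-a,\qquad d_0=-b,\qquad f_0=-\frac cb.
\]
The quantities in \cref{prop:wz-universal} are then
\[
\Delta=a_0(c_0+f_0)-b_0=ab+c,\qquad \Gamma=a_0^3-a_0^2d_0+a_0b_0c_0-b_0^2=-b^3+b^3=0.
\]
Therefore $\alpha=\Delta^2=(ab+c)^2$ and $\beta=(a_0-d_0-c_0f_0-f_0^2)\Gamma-a_0\Delta^2=b(ab+c)^2$, which is exactly the claimed pair. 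Because $\Gamma=0$, the messy factor $a_0-d_0-c_0f_0-f_0^2$ drops out, so no heavy expansion is needed.

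Transporting the result back to $g$ is straightforward. By \cref{lem:scalings}, $H_n(Q)=(-b)^nH_n(g)$. A geometric factor does not change the Somos parameters, by \cref{lem:somos-invariance} or by direct substitution, since every term of \eqref{eq:somos4-bilinear} acquires the same factor $b^{2n-4}$. Hence $(H_n(g))_{n\ge0}$ satisfies the $\bigl((ab+c)^2,\,b(ab+c)^2\bigr)$ identity in $\mathbb Q(a,b,c)$.

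The main obstacle is the division by $b$ in $f_0=-c/b$ and in the rescaling, so the case $b=0$ needs care. I would handle it by the specialization principle of \cref{rem:specialization}. The coefficients of $g$ come from the displayed recursion and are polynomials in $a,b,c$. Hence each $H_n(g)$ is a polynomial, and each identity $H_nH_{n-4}-\alpha H_{n-1}H_{n-3}-\beta H_{n-2}^2=0$ is a polynomial identity in $\mathbb Z[a,b,c]$. It holds for generic $b\neq0$, so it holds identically, including at $b=0$.
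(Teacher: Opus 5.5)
Your proposal is correct and follows the paper's proof essentially step for step. Both use the Catalan rewriting to $g=1/(1-ax-bx^2+x^2(b+cx)g)$, the normalization $Q=-bg$ with $(a_0,b_0,c_0,d_0,f_0)=(-b,0,-a,-b,-c/b)$, and the fact that the factor $(-b)^n$ leaves the Somos parameters unchanged. Your $\Gamma=0$ (equivalent to the paper's $a_1=0$) and your explicit specialization argument at $b=0$ are just more careful versions of the same steps.
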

\begin{proof}
According to the generating function of $g(x)$, the continued fraction of $g(x)$ is given by 
\begin{align*}
 g(x)=\frac1{1-ax-bx^2+x^2(b+cx)g(x)}.
\end{align*}
Set \(Q(x)=-bg(x)\). Then we have
\[Q(x)=\frac{-b}{1-ax-bx^2+x^2(-1-(c/b)x)Q(x)}.
\]
This satisfies the Wang--Zhang condition with
\[a_0=-b,\quad b_0=0,\quad c_0=-a,\quad d_0=-b,\quad f_0=-c/b,\quad f_1=0,\quad a_1=0.
\]
The formulas in \cref{thm:wz} immediately give $\alpha=(ab+c)^2$ and $\beta=b(ab+c)^2$.
The geometric scaling and an index shift do not change these parameters.
This completes the proof.
\end{proof}

The following assertion is even more directly a cubic continued fraction.

\begin{corollary}{\em (Conjectured in \cite[Section 6]{Barry2019Pseudo})}\label{thm:pseudo-asequence}
Let $F(x)$ be the power series defined as follows:
\[
F(x)=\frac{1}{1 + ax + bx^2} \mathcal{C} \left( \frac{x^3(ab + c)}{(1 + ax + bx^2)^2} \right).
\]
Then the Hankel transform \((H_{n+1}(F))_{n\geq 0}\) forms an $\bigl((ab+c)^2,\;b(ab+c)^2\bigr)$ Somos-4 sequence.
\end{corollary}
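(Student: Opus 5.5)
The plan is to reduce the statement directly to \cref{thm:cubic-master}, in the same way as \cref{cor:2021-p27}.

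\emph{Step 1: functional equation.} Set $P(x)=1+ax+bx^2$ and $z=x^3(ab+c)/P(x)^2$. Then $F(x)=\mathcal{C}(z)/P(x)$, so $\mathcal{C}(z)=P(x)F(x)$. Substituting this into the Catalan equation $\mathcal{C}(z)=1+z\,\mathcal{C}(z)^2$ gives
\[
P(x)F(x)=1+\frac{x^3(ab+c)}{P(x)^2}\,P(x)^2F(x)^2=1+(ab+c)x^3F(x)^2 .
\]
Rearranging,
\[
F(x)=\frac{1}{1+ax+bx^2-(ab+c)x^3F(x)} .
\]
Since $\mathcal{C}(0)=1$ and $P(0)=1$, we have $F(0)=1$. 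This equation has a unique power series solution with constant term $1$, because its coefficients are determined recursively, so $F(x)$ is exactly that solution.

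\emph{Step 2: apply \cref{thm:cubic-master}.} The equation in Step 1 has the form $F=1/(1-rx-sx^2-tx^3F)$ with
\[
r=-a,\qquad s=-b,\qquad t=ab+c .
\]
\cref{thm:cubic-master} then says that $(H_{n+1}(F))_{n\geq0}$ is a $(t^2,-st^2)$ Somos-4 sequence. Substituting the values of $r,s,t$ gives $t^2=(ab+c)^2$ and $-st^2=b(ab+c)^2$, which are the claimed parameters.

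\emph{Degenerate cases.} If $b=0$ or $ab+c=0$, the parameters of the intermediate series in the proof of \cref{thm:cubic-master} may become singular. These values are handled by the specialization principle of \cref{rem:specialization}, that is, by working over $\mathbb{Q}(a,b,c)$ and then specializing.

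The only step that needs care is the algebraic manipulation in Step 1, in particular checking the sign of the $x^3F$ term, since it fixes the sign of $s$ and hence of $\beta$. No step is expected to be a real obstacle.
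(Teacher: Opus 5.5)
Your proposal is correct and is essentially the paper's proof: the paper likewise uses $\mathcal{C}(z)=1+z\,\mathcal{C}(z)^2$ to obtain $F(x)=1/\bigl(1+ax+bx^2-(ab+c)x^3F(x)\bigr)$, then applies \cref{thm:cubic-master} with $(r,s,t)=(-a,-b,ab+c)$. On the degenerate cases, the only singular value in that reduction is $s=-b=0$; the case $ab+c=0$ involves no division, though invoking specialization there as well does no harm.
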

\begin{proof}
According to the generating function of $F(x)$, the continued fraction of $F(x)$ is given by 
\[
 F(x)=\frac{1}{1+ax+bx^2-(ab+c)x^3F(x)}.
\]
The result follows from \cref{thm:cubic-master} with $(r,s,t)=(-a,-b,ab+c)$.
\end{proof}

\section{Periodic Hankel evaluations}\label{Section-Seven-Hankel-Evalu}

\subsection{Periodic quadratic transformations}

The explicit evaluations in \cite[Conjectures~26, 30, and 39]{Barry2021Catalan} require periodic quadratic transformations rather than a single application of \cref{thm:wz}. They are proved in \cref{thm:2021-c26-c30,thm:2021-c39} below.

Set \(\kappa=2-\alpha\), and define
\[ d_{-1}=d_0=1,\qquad d_{m+1}=3d_m-\kappa d_{m-1}\quad(m\geq0).
\]
Thus we have
\begin{equation}\label{eq:2021-d-generating}
 \sum_{m\geq0}d_my^m=\frac{1-\kappa y}{1-3y+\kappa y^2}.
\end{equation}

\begin{theorem}{\em (Conjectured in \cite[Conjecture 26 and 30]{Barry2021Catalan})}\label{thm:2021-c26-c30}
Let $F(x)$ be the generating function given by
\begin{align*}
 F(x)=\frac{1-x-\alpha x^2}{1-x-x^2}\mathcal{C}\!\left(\frac{x^2(1-x-\alpha x^2)}{(1-x-x^2)^2}\right).
\end{align*}
For \(h_n=H_{n+1}(F)\), we have
\begin{equation}\label{eq:2021-c26-result}
 h_n=\kappa^{\lfloor(n+1)^2/4\rfloor}d_{\lfloor n/2\rfloor}.
\end{equation}
Equivalently,
\[h_n=(2-\alpha)^{\lfloor(n+1)^2/4\rfloor}[x^n]\frac{(1+x)(1+(\alpha-2)x^2)}{1-3x^2-(\alpha-2)x^4}.
\]
Conjecture~30 in \cite{Barry2021Catalan} is the same formula with \(\kappa=\beta\).
\end{theorem}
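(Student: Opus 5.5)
The plan is to get an $(\alpha',\beta')$ Somos-4 recurrence for $h_n$ from \cref{prop:wz-universal}, then show the closed form in \eqref{eq:2021-c26-result} satisfies the same recurrence with the same initial values. All of this is done over the rational function field $\mathbb Q(\alpha)$, where nothing vanishes identically, and the result is then specialized as in \cref{rem:specialization}.

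\emph{Step 1: reduce $F$ to Wang--Zhang form.} From $\mathcal C=1+x\mathcal C^2$ we get
\[F(x)=\frac{1-x-\alpha x^2}{1-x-x^2-x^2(1-x-\alpha x^2)F(x)}.\]
The numerator is quadratic, so \cref{lem:sx} does not apply directly. Instead I would use \cref{lem:jacobi-step}. Since $F(0)=1$ and $[x]F=0$, we can write $1/F=1-x^2Q(x)$ with
\[Q=F+\frac{1-\alpha}{1-x-\alpha x^2},\]
which gives $h_n=H_{n+1}(F)=H_n(Q)$. Eliminating $F$ from the quadratic should give a relation $(1+c_0x+d_0x^2)Q+x^2(-1+f_0x)Q^2=a_0+b_0x$, possibly after scaling $Q$ by a constant, which only multiplies $H_n(Q)$ by a geometric factor. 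If the relation has higher degree, I would first apply one step of \cref{lem:sx} to reach the form \eqref{eq:wz-cross-multiplied}. Then \eqref{eq:wz-polynomial-parameters} gives explicit $(\alpha',\beta')$ in $\mathbb Z[\alpha]$ with $h_nh_{n-4}=\alpha'h_{n-1}h_{n-3}+\beta'h_{n-2}^2$.

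\emph{Step 2: show the closed form satisfies this recurrence.} Put $T_n=\kappa^{e_n}d_{\lfloor n/2\rfloor}$ with $e_n=\lfloor (n+1)^2/4\rfloor$. Split by the parity of $n$. The exponent $e_n$ is an explicit quadratic in $n$ on each parity class, so all $\kappa$-powers factor out of the bilinear identity, leaving identities of degree two in the $d_m$:
\begin{itemize}
\item for even $n=2m$, one involving $d_md_{m-2}$, $d_{m-1}^2$ and $d_{m-1}^2$;
\item for odd $n$, one involving $d_md_{m-2}$, $d_md_{m-1}$ and $d_{m-1}^2$.
\end{itemize}
These reduce to the Cassini-type invariant of the linear recurrence $d_{m+1}=3d_m-\kappa d_{m-1}$, namely $d_{m+1}d_{m-1}-d_m^2=c\,\kappa^{m}$ with a constant $c$ computed from $d_{-1}=d_0=1$, $d_1=3-\kappa$. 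This is exactly where $\alpha'$ and $\beta'$ must coincide with the Step 1 values. Alternatively, one can check via \eqref{eq:2021-d-generating} that both parity identities are quadratic forms in $(d_m,d_{m-1})$ times powers of $\kappa$, and use the invariant to verify them.

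\emph{Step 3: match initial values and conclude.} Compute $h_0,\dots,h_3$ directly from the series and compare them with $T_0,\dots,T_3$. Since $h_n=H_n(Q)$ is a nonzero element of $\mathbb Q(\alpha)$ by Step 2 of the proof of \cref{prop:wz-universal}, the recurrence determines $h_n$ from earlier terms, so induction gives $h_n=T_n$ in $\mathbb Q(\alpha)$. Both sides are polynomials in $\alpha$, so the identity specializes to every $\alpha$. The equivalent generating-function form follows by interleaving \eqref{eq:2021-d-generating}: $\sum_n d_{\lfloor n/2\rfloor}x^n=(1+x)\sum_m d_mx^{2m}$. Conjecture~30 is the same statement with $\kappa=\beta$.

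The main obstacle I expect is Step 1: finding the right normalization of $Q$, or the one preliminary Sulanke--Xin step, that puts it exactly in form \eqref{eq:wz-cross-multiplied}, and then checking that the resulting $(\alpha',\beta')$ matches the identity in Step 2. If they do not match, the fallback is to iterate \cref{lem:sx} directly. I would show the parameter orbit \eqref{eq:wz-xin-orbit} is periodic with period two up to the scaling $A_{j+2}\sim A_j$ after factoring out $\kappa$, and then read off $h_n$ from the product formula \eqref{eq:wz-Hankel-product}.
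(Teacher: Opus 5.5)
Your main route cannot work, because the claim it rests on is false: for generic $\alpha$ the sequence $h_n$ is \emph{not} an $(\alpha',\beta')$ Somos-4 sequence for any constants $\alpha',\beta'$. Your own Step~2 exposes this. With $h_{2m}=\kappa^{m^2+m}d_m$ and $h_{2m+1}=\kappa^{(m+1)^2}d_m$, the identities at $n=2m$ and $n=2m+1$ reduce to
\[\kappa^2d_md_{m-2}=\alpha'\kappa\, d_{m-1}d_{m-2}+\beta'd_{m-1}^2,\qquad \kappa^2d_md_{m-2}=\alpha' d_md_{m-1}+\beta'd_{m-1}^2 .\]
Subtracting gives $\alpha'd_{m-1}(\kappa d_{m-2}-d_m)=0$, which forces $\alpha'=0$. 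Then $\kappa^2d_md_{m-2}=\beta'd_{m-1}^2$ contradicts the Cassini identity $d_md_{m-2}-d_{m-1}^2=\alpha\kappa^{m-1}$. Concretely, for $\alpha=1$ you get $h_n=1,1,2,2,5,5,13,13,\dots$. The cases $n=4,5$ force $(\alpha',\beta')=(0,5/4)$, but at $n=6$ you get $h_6h_2=26\neq\frac{125}{4}=\beta'h_4^2$.

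Correspondingly, Step~1 cannot be completed. Your functional equation also has a spurious factor; it should read $F=(1-x-\alpha x^2)/(1-x-x^2-x^2F)$. With $1/F=1-x^2Q$ one gets $Q=(F+1-\alpha)/(1-x-\alpha x^2)$ and
\[Q=\frac{\kappa}{1-x+(1-2\alpha)x^2-x^2(1-x-\alpha x^2)Q}.\]
Here the polynomial multiplying $x^2Q$ is quadratic for $\alpha\neq0$. So neither \cref{lem:sx} nor the form \eqref{eq:wz-cross-multiplied} applies. No normalization or preliminary Sulanke--Xin step can repair this, since the resulting conclusion would be false. Your fallback through the orbit \eqref{eq:wz-xin-orbit} fails for the same reason.

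Your ``period two'' instinct is right, but it has to be realized with Jacobi steps (\cref{lem:jacobi-step}), not with the Wang--Zhang orbit, which keeps $c_j=C$ fixed. The paper introduces the one-parameter family $F_R$ and verifies the covariance $F_R=1/(1-ux^2G_R)$, $G_R=1/(1-x-R'x^2F_{R'})$, with $u=\kappa/R$ and $R'=3-u$. Linearizing $R_{m+1}=3-\kappa/R_m$ via $R_m=d_m/d_{m-1}$ gives a J-fraction with alternating $u_{2m}=0$, $u_{2m+1}=1$ and with $v_{2m+1}=\kappa d_{m-1}/d_m$, $v_{2m+2}=d_{m+1}/d_m$. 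The product formula then telescopes directly to $\kappa^{\lfloor(n+1)^2/4\rfloor}d_{\lfloor n/2\rfloor}$, working over $\mathbb Q(\kappa)$ and specializing at the end. The alternation of the $u_j$ is exactly what a single Somos-4 structure cannot encode. Your interleaving argument for the equivalent generating-function form is fine.
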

\begin{proof}
By the definition of $F(x)$, we obtain the quadratic equation
\begin{equation}\label{eq:2021-c26-quadratic}
x^2F(x)^2-(1-x-x^2)F(x)+1-x-\alpha x^2=0.
\end{equation}
For a parameter \(R\), let \(F_R(0)=1\) be the distinguished solution of
\begin{equation}\label{eq:2021-c26-tail}
 Rx^2F_R^2+\bigl(-1+x+(3-2R)x^2\bigr)F_R
 +1-x+\left(R-3+\frac{\kappa}{R}\right)x^2=0.
\end{equation}

Set \(u = \kappa/R\) and \(R' = 3 - u\). A direct substitution yields the continued-fraction decomposition
\begin{equation}\label{eq:2021-c26-two-steps}
 F_R=\frac1{1-ux^2G_R},\qquad
 G_R=\frac1{1-x-R'x^2F_{R'}}.
\end{equation}
To verify this, substitute the expression for $F_R$ into \eqref{eq:2021-c26-tail}; using the defining equation for $F_{R'}$ and the relations $u=\kappa/R$, $R'=3-u$, one obtains an identity.

This transformation is \emph{covariant}: it preserves the form of the quadratic equation, merely replacing the parameter \(R\) by \(R'\).

Starting from \(R_0 = 1\), repeated application of \eqref{eq:2021-c26-two-steps} gives the parameter iteration
\[
R_{m+1} = 3 - \frac{\kappa}{R_m}.
\]
We linearize this iteration by defining the sequence \((d_m)\) via
\[
d_{-1} = d_0 = 1, \qquad d_{m+1} = 3d_m - \kappa d_{m-1} \quad (m\ge 0).
\]
So we have
\[
d_{-1} = d_0 = 1, \qquad \frac{d_{m+1}}{d_m} = 3 - \frac{\kappa d_{m-1}}{d_m} =3 - \frac{\kappa} {\frac{d_m}{ d_{m-1}}}  \quad (m\ge 0).
\]
With the same initial values and recurrence, we get
\[
R_m = \frac{d_m}{d_{m-1}}.
\]

We work over the rational function field \(\mathbb{Q}(\kappa)\). For \(\kappa = 0\), one has \(d_m = 3^m\) for all \(m \geq 0\), 
so each \(d_m\) is a nonzero polynomial in \(\kappa\). Consequently, \(R_m = d_m / d_{m-1} \neq 0\) in \(\mathbb{Q}(\kappa)\), and all subsequent quotient operations are well defined.
For each fixed \(n\), both sides of the asserted determinant formula
belong to \(\mathbb Q[\kappa]\). Since they agree in \(\mathbb Q(\kappa)\), they agree as polynomials and therefore specialize
to every value of \(\kappa\), including values for which some \(d_m\)
vanishes.

Iterating the continued-fraction decomposition yields the Jacobi fraction
\[
F(x) = \cfrac{1}{1 - u_0 x - \cfrac{v_1 x^2}{1 - u_1 x - \cfrac{v_2 x^2}{1 - u_2 x - \cdots}}},
\]
whose coefficients are explicitly given by
\begin{equation}\label{eq:coeffs}
u_{2m}=0,\qquad u_{2m+1}=1,\qquad
v_{2m+1}=\kappa\frac{d_{m-1}}{d_m},\qquad
v_{2m+2}=\frac{d_{m+1}}{d_m}\quad(m\ge0).
\end{equation}
For a Jacobi continued fraction with coefficients \(v_j\), the classical formula states that
\begin{equation}\label{eq:prod}
H_{n+1}(F) = \prod_{j=1}^{n} v_j^{\,n+1-j}.
\end{equation}
Substituting the explicit values from \eqref{eq:coeffs} into \eqref{eq:prod}, we compute the product according to the parity of $n$.

\medskip\noindent\textbf{Case 1: $n=2m-1$ ($m\ge1$).}
Then
\[
H_{2m}(F)=\prod_{j=1}^{2m-1} v_j^{\,2m-j}.
\]
Separating odd and even indices and using \eqref{eq:coeffs}, we get
\[
H_{2m}(F)
=
\prod_{r=0}^{m-1} v_{2r+1}^{\,2m-2r-1}
\prod_{r=1}^{m-1} v_{2r}^{\,2m-2r}.
\]
Substituting $v_{2r+1}=\kappa d_{r-1}/d_r$ and $v_{2r}=d_r/d_{r-1}$, the product telescopes:
\begin{align*}
  H_{2m}(F) &=\kappa^{\sum_{r=0}^{m-1}(2m-2r-1)}
\cdot
\left(\frac{d_{m-2}}{d_{m-1}}\right) \cdot \left(\frac{d_{m-3}}{d_{m-2}}\right)^3 \cdots \left(\frac{d_{-1}}{d_{0}}\right)^{2m-1}\cdot \left(\frac{d_{m-1}}{d_{m-2}}\right)^2 \cdots \left(\frac{d_{1}}{d_{0}}\right)^{2m-2} \\
   & =\kappa^{m^2} d_{m-1}.
\end{align*}

\noindent\textbf{Case 2: $n=2m$ ($m\ge0$).}
Similarly,
\[H_{2m+1}(F)=\prod_{j=1}^{2m} v_j^{\,2m+1-j}=\prod_{r=0}^{m-1} v_{2r+1}^{\,2m-2r}
\prod_{r=1}^{m} v_{2r}^{\,2m+1-2r}=\kappa^{m(m+1)} d_m.\]

These two cases are unified by
\[H_{n+1}(F) = \kappa^{\lfloor (n+1)^2/4 \rfloor} d_{\lfloor n/2 \rfloor}.
\]
Finally, using the generating function
\[\sum_{m\ge 0} d_m y^m = \frac{1 - \kappa y}{1 - 3y + \kappa y^2},
\]
we need prove the equivalent coefficient form
\[
H_{n+1}(F)=(2-\alpha)^{\lfloor (n+1)^2/4 \rfloor}[x^n]\,\frac{(1+x)(1+(\alpha-2)x^2)}{1-3x^2-(\alpha-2)x^4}.
\]
Substituting \( y = x^2 \) into the generating function of \( d_m \) yields
\begin{equation}\label{eq:dm}
\sum_{m\ge 0} d_m x^{2m}=\frac{1 - \kappa x^2}{1 - 3x^2 + \kappa x^4}. 
\end{equation}
Multiplying both sides of \eqref{eq:dm} by \( 1 + x \), we obtain
\begin{equation}\label{eq:dmx}
(1+x) \sum_{m\ge 0} d_m x^{2m}=\sum_{m\ge 0} d_m x^{2m}+\sum_{m\ge 0} d_m x^{2m+1}.
\end{equation}
Extracting the coefficient of \( x^n \) in \eqref{eq:dmx}, we distinguish two cases:

\begin{itemize}
\item If \( n = 2m \) is even, the coefficient is \( d_m \);
\item If \( n = 2m + 1 \) is odd, the coefficient is also \( d_m \).
\end{itemize}
In both instances, the resulting coefficient equals \( d_{\lfloor n/2 \rfloor} \). Hence
\[
[x^n] (1+x) \frac{1 - \kappa x^2}{1 - 3x^2 + \kappa x^4}
= d_{\lfloor n/2 \rfloor}. 
\]
This is precisely the desired equivalent coefficient form.

Conjecture~30 in \cite{Barry2021Catalan} follows from \(\beta=2-\alpha=\kappa\).
This completes the proof.
\end{proof}

\begin{lemma}\label{lem:three-step-block}
Suppose
\[F(x)=\frac1{1-ux^3G(x)},\qquad G(x)=\frac1{1-x-x^2-vx^3E(x)}.
\]
Then, for \(N\geq3\), we have
\begin{equation}\label{eq:three-step-block}
 H_N(F)=-u^{N-1}v^{N-3}H_{N-3}(E).
\end{equation}
\end{lemma}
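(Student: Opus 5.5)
The plan is to apply the bivariate Hankel kernel together with unitriangular congruences (\cref{prop:unit_congruence}) twice, in the same way as in the proof of \cref{lem:jacobi-step}. The first reduction strips off $u$ and reduces $H_N(F)$ to a Hankel determinant of the series $xG(x)$. The second strips off $v$ together with a $2\times2$ block whose determinant is $-1$.

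\emph{First step.} Since $F(0)=1$, the series $1/F(x)=1-ux^3G(x)$ is a unit with constant term $1$. Multiplying $\mathcal K_F(x,y)$ by $1/(F(x)F(y))$ gives
\[
\frac{x/F(y)-y/F(x)}{x-y}=\frac{(x-y)+uxy\bigl(x^2G(x)-y^2G(y)\bigr)}{x-y}
=1+u\,xy\,\mathcal K_{xG}(x,y).
\]
Here $\mathcal K_{xG}$ is the Hankel kernel of the series $xG(x)=\sum_{k\ge0}g_{k-1}x^k$, with $g_{-1}=0$. So each $N\times N$ leading block is congruent, via a unit lower-triangular matrix, to $1\oplus u\,(g_{i+j-1})_{0\le i,j<N-1}$. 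This gives
\[
H_N(F)=u^{N-1}H_{N-1}(xG).
\]

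\emph{Second step.} I treat $\mathcal K_{xG}(x,y)=\dfrac{x^2G(x)-y^2G(y)}{x-y}$ in the same way, using the unit $1/G(x)=1-x-x^2-vx^3E(x)$. The product $\mathcal K_{xG}(x,y)/(G(x)G(y))$ equals $\dfrac{x^2/G(y)-y^2/G(x)}{x-y}$. Expanding the numerator gives
\[
x^2-y^2-x^2y+xy^2+v\,x^2y^2\bigl(xE(x)-yE(y)\bigr)
=(x-y)(x+y-xy)+v\,x^2y^2\bigl(xE(x)-yE(y)\bigr).
\]
Hence the transformed kernel is
\[
x+y-xy+v\,x^2y^2\,\mathcal K_E(x,y).
\]
Its coefficient matrix is block diagonal. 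The first block is the $2\times 2$ matrix $\begin{pmatrix}0&1\\1&-1\end{pmatrix}$, which has determinant $-1$. The remaining block, in rows and columns of index at least $2$, is $v$ times the Hankel matrix of $E$. So for $M\ge2$,
\[
H_M(xG)=-v^{M-2}H_{M-2}(E).
\]
Taking $M=N-1$, which needs $N\ge3$, and combining with the first step yields \eqref{eq:three-step-block}.

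\emph{Main obstacle.} The only real care needed is the bookkeeping in the second congruence. First, the leading block of the Hankel matrix of $xG$ starts with a zero entry, but this does not matter: the congruence preserves all leading principal minors regardless of the matrix entries. Second, one must confirm that the terms $x+y-xy$ touch only the positions $(0,1)$, $(1,0)$ and $(1,1)$. They therefore do not interact with the block $v\,x^2y^2\mathcal K_E$, which occupies only indices $i,j\ge2$. Once that is checked, the determinant factors exactly as claimed. No nonvanishing assumptions on $u$ or $v$ are needed, because each step is a polynomial identity.
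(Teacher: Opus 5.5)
Your proposal is correct and follows the same route as the paper's proof. The paper also applies two unitriangular kernel congruences. The first, by $1/F$, gives $H_N(F)=u^{N-1}H_{N-1}(xG)$; the second, by $1/G$, gives the kernel $x+y-xy+vx^2y^2\mathcal K_E(x,y)$, which splits as the $2\times2$ block $\begin{pmatrix}0&1\\1&-1\end{pmatrix}$ (determinant $-1$) plus $v$ times the Hankel matrix of $E$.
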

\begin{proof}
For a formal power series $A(x)$, define the associated bivariate Hankel kernel by
\[
\mathcal{K}_A(x,y)=\frac{xA(x)-yA(y)}{x-y}.
\]
Since \(F(x)=1/(1-ux^3G(x))\), we have \(1/F(x)=1-ux^3G(x)\). Applying  \cref{prop:unit_congruence}, 
the unitriangular congruence that multiplies $\mathcal K_F(x,y)$ on the left and on the right by $1/F(x)$ and $1/F(y)$, respectively, yields
\begin{align*}
 \frac{\mathcal{K}_F(x,y)}{F(x)F(y)} &=\frac{1}{F(x)F(y)}\cdot \frac{xF(x)-yF(y)}{x-y}=\frac{x/F(y)-y/F(x)}{x-y} \\
   & =\frac{x-y - ux^3yG(y) + uxy^3G(x)}{x-y}= 1 + uxy\,\frac{x^2G(x)-y^2G(y)}{x-y}.
\end{align*}
Since the congruence is unitriangular, it preserves every leading principal minor. Therefore, for each $N\ge 1$, 
the $N\times N$ leading principal block is transformed into
\[
1 \oplus \bigl( u \cdot H_{N-1}(xG) \bigr),
\]
hence, we can have \(H_N(F)=u^{N-1}H_{N-1}(xG)\).

It remains to evaluate $H_{N-1}(xG)$. To this end, set
\[
L_G(x,y):=\frac{x^2G(x)-y^2G(y)}{x-y}.
\]

Since \(1/G(x)=1-x-x^2-vx^3E(x)\), we apply  \cref{prop:unit_congruence} once more, 
dividing by $G(x)G(y)$ gives the congruent kernel
\begin{align*}
  \frac{L_G(x,y)}{G(x)G(y)} & =\frac{1}{G(x)G(y)}\cdot \frac{x^2G(x)-y^2G(y)}{x-y} =\frac{x^2/G(y)-y^2/G(x)}{x-y} \\
   & =\frac{x^2(1-y-y^2-vy^3E(y))-y^2(1-x-x^2-vx^3E(x))}{x-y} \\
   & =\frac{x^2-y^2 - xy(x-y) - vx^2y^2\bigl(yE(y)-xE(x)\bigr)}{x-y}\\
   &=x+y-xy + vx^2y^2\,\frac{xE(x)-yE(y)}{x-y}\\
   &=x+y-xy + vx^2y^2\mathcal{K}_E(x,y).
\end{align*}

The constant part $x+y-xy$ corresponds to the $2\times 2$ matrix
\[
M:=
\begin{pmatrix}
0 & 1 \\
1 & -1
\end{pmatrix},
\qquad \det M = -1.
\]
Hence, the unitriangular congruence transforms each leading principal block of $L_G$ into
\[
M \oplus \bigl( v \cdot H_{N-3}(E) \bigr)
\]
for every $N\ge 3$. Consequently,
\(H_{N-1}(xG)=\det (M)\cdot v^{N-3}H_{N-3}(E) =-v^{N-3}H_{N-3}(E).\)

Therefore, we obtain \(H_N(F)=(-1)\cdot u^{N-1}\cdot v^{N-3}H_{N-3}(E).\)
\end{proof}

\begin{theorem}{\em (Conjectured in \cite[Conjecture 39]{Barry2021Catalan})}\label{thm:2021-c39}
Let $F(x)$ be the generating function given by
\begin{equation}\label{eq:2021-c39-F}
 F(x)=\frac{1-x-x^2-\alpha x^3}{1-x-x^2-x^3} \mathcal{C}\!\left(\frac{x^3(1-x-x^2-\alpha x^3)}{(1-x-x^2-x^3)^2}\right),
\end{equation}
set \(h_n=H_{n+1}(F)\), and retain
\( \kappa=2-\alpha\) and \(d_m\) from \eqref{eq:2021-d-generating}.  Then we have
\begin{align}
 h_{3m}&=(-1)^m\kappa^{3m(m+1)/2}d_m,\label{eq:2021-c39-h0}\\
 h_{3m+1}&=0,\label{eq:2021-c39-h1}\\
 h_{3m+2}&=(-1)^{m+1}
 \kappa^{(m+1)(3m+4)/2}d_m,\label{eq:2021-c39-h2}
\end{align}
for all integers $m\ge 0$.

Equivalently, there exist sequences $A_n(\alpha)$ and $B_n$ such that
\[
h_n = A_n(\alpha)\, \kappa^{\,B_n},
\]
where
\[
A_n(\alpha)=[x^n]\,
\frac{1+(\alpha-2)x^2-(\alpha-2)x^3+(4\alpha-5)x^5-(\alpha-1)(\alpha-2)x^8}
{1+3x^3-(\alpha-2)x^6},
\]
and
\[
B_n=[x^n]\,
\frac{x(1-x+2x^2-2x^3+3x^4-3x^5+x^6)}
{(1-x)^2(1-x^3)}.
\]
This is precisely the Hankel determinant result in the coefficient-exponent form conjectured in \cite[Conjecture~39]{Barry2021Catalan}.
\end{theorem}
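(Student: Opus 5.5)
The plan is to mirror the proof of \cref{thm:2021-c26-c30}, but with the three-step block of \cref{lem:three-step-block} replacing the two-step Jacobi decomposition. We work over $\mathbb Q(\kappa)$ throughout and specialize at the end, exactly as in that proof: each $d_m$ is a nonzero polynomial in $\kappa$, since $d_m=3^m$ at $\kappa=0$.

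\emph{Step 1: a covariant family.} From \eqref{eq:2021-c39-F} and $\mathcal C=1+x\mathcal C^2$ we get
\[x^3F^2-(1-x-x^2-x^3)F+1-x-x^2-\alpha x^3=0.\]
I would embed this in a one-parameter family $F_R$ with $F_R(0)=1$ and $F_1=F$. Each $F_R$ should satisfy a quadratic
\[Rx^3F_R^2+\bigl(-1+x+x^2+c(R)x^3\bigr)F_R+1-x-x^2+e(R)x^3=0.\]
The coefficients $c(R),e(R)$ are to be determined by requiring the decomposition
\[F_R=\frac{1}{1-ux^3G_R},\qquad G_R=\frac{1}{1-x-x^2-vx^3F_{R'}},\]
with $u,v,R'$ rational in $R$ and $\kappa$. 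Guided by the Catalan case, I expect $u=-\kappa/R$ (or $\kappa/R$ up to sign), $v$ proportional to $R'$, and $R'=3-\kappa/R$. Then $R_m=d_m/d_{m-1}$ as before.

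Determining $c,e,u,v$ amounts to substituting the ansatz and matching coefficients, i.e.\ clearing denominators in a quadratic identity. This verification, and in particular pinning down the signs, is what I expect to be the main obstacle. Whatever the exact normalisation of $u$ and $v$, it must reproduce the factor $-u^{N-1}v^{N-3}$ predicted below.

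\emph{Step 2: iterate \cref{lem:three-step-block}.} With $N=n+1$ the lemma gives
\[h_n=-u_0^{n}v_0^{n-2}\,h^{(1)}_{n-3},\]
where $h^{(1)}$ is the shifted Hankel sequence of $F_{R_1}$. Iterating $m$ times yields $h_{3m+r}$ as $(-1)^m$ times a product of $u_j^{(\cdot)}v_j^{(\cdot)}$ times $h^{(m)}_r$, for $r\in\{0,1,2\}$. The base values come from the first coefficients of $F_R=1+0x+0x^2+(\cdots)x^3+\cdots$:
\[h^{(m)}_0=1,\qquad h^{(m)}_1=H_2(F_{R_m})=0,\]
since the coefficients of $x$ and $x^2$ vanish. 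Also $h^{(m)}_2=H_3(F_{R_m})=-(f_3)^2$, where $f_3=u_m$.

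The vanishing of $h^{(m)}_1$ yields \eqref{eq:2021-c39-h1} at once. This is also consistent with the lemma, because the $N=2$ block $\left(\begin{smallmatrix}1&0\\0&0\end{smallmatrix}\right)$ is singular. For $r=0,2$, the products of $u_j=\kappa d_{j-1}/d_j$ and $v_j\propto d_{j+1}/d_j$ telescope as in Cases 1--2 of \cref{thm:2021-c26-c30}. Tracking the exponents $n-3j$ and $n-3j-2$ should give the powers $\kappa^{3m(m+1)/2}$ and $\kappa^{(m+1)(3m+4)/2}$ together with a single surviving $d_m$. The extra $-1$ in the $r=2$ case comes from $h^{(m)}_2=-u_m^2$. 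I would check the exponent sums $\sum_{j<m}(3m-3j)$ and $\sum_{j<m}(3m-3j-2)$ against the claimed exponents, since the powers of $\kappa$ coming from $u_j$ and from $v_j$ must combine correctly.

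\emph{Step 3: coefficient form.} Finally I would verify the equivalent generating-function form. For $B_n$, the series is obtained by summing the three exponent formulas over residues mod $3$, and is a routine rational-function check. For $A_n(\alpha)$, substitute $y=-x^3$ into \eqref{eq:2021-d-generating}. This gives the denominator $1+3x^3-(\alpha-2)x^6$, and one multiplies by the appropriate numerator to place $(-1)^md_m$ at $x^{3m}$, $0$ at $x^{3m+1}$, and $(-1)^{m+1}\kappa^{\,\cdot}$-adjusted terms at $x^{3m+2}$. The mismatch in $\kappa$-powers between the $A_n$ and $B_n$ splitting is absorbed by small-index corrections in the numerator, which are finitely many terms to check.
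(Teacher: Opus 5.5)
Your plan is the paper's proof: the same covariant family $F_R$ iterated through \cref{lem:three-step-block} with $R_m=d_m/d_{m-1}$, the same telescoping, and the same coefficient-form check. Your base cases $H_1,H_2,H_3$ of $F_{R_m}$ differ from the paper's $H_0,H_1,H_2$ only by one more application of the lemma.

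The computation you defer closes with $c(R)=3-2R$, $e(R)=R-3+\kappa/R$, $u=\kappa/R$ and $v=R'=3-\kappa/R$. The sign of $u$ is not a choice: $u=[x^3]F_R$, and $[x^3]F_1=2-\alpha=\kappa$. So your first guess $u=-\kappa/R$ cannot occur, and it would have given the wrong value $h_3=+\kappa^3d_1$ instead of $-\kappa^3d_1$.
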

\begin{proof}
This proof is similar to that of \cref{thm:2021-c26-c30}.
By the definition of \(F(x)\), we obtain the quadratic equation
\begin{equation}\label{eq:2021-c39-quadratic}
 x^3F(x)^2-(1-x-x^2-x^3)F(x)+1-x-x^2-\alpha x^3=0.
\end{equation}
For a parameter \(R\), let \(F_R(0)=1\) be the distinguished solution of
\begin{align}\label{eq:2021-c39-tail}
 Rx^3F_R^2+\bigl(-1+x+x^2+(3-2R)x^3\bigr)F_R+1-x-x^2+\left(R-3+\frac{\kappa}{R}\right)x^3=0,
\end{align}
where $\kappa=2-\alpha$. 

Set \(u=\kappa/R\) and \(R'=3-u\).  Direct elimination gives the
closed pair of quadratic transformations
\begin{align*}
 F_R=\frac1{1-ux^3G_R},\qquad
 G_R=\frac1{1-x-x^2-R'x^3F_{R'}}.
\end{align*}
Substitution of the coefficients in \eqref{eq:2021-c39-tail} yields the same family with \(R\) replaced by \(R'=3-\kappa/R\).

Now set $F=F_1$ in \eqref{eq:2021-c39-quadratic}. Define sequences $(R_m)_{m\ge 0}$ and $(u_m)_{m\ge 0}$ by
\[
 R_m=\frac{d_m}{d_{m-1}},\qquad
 u_m=\frac{\kappa}{R_m}=\kappa\frac{d_{m-1}}{d_m},\qquad
 R_{m+1}=3-\frac{\kappa}{R_m}.
\]
Iterating the reduction formula from \cref{lem:three-step-block} gives 
\begin{equation}\label{eq:iteration-result}
H_N(F_1)=(-1)^m
\prod_{j=0}^{m-1}
\left(u_j^{N-3j-1}R_{j+1}^{N-3j-3}\right)
H_{N-3m}(F_{R_m}),
\end{equation}
whenever \(N-3m\in\{0,1,2\}\).  

Substituting the explicit expressions
\(u_j=\kappa d_{j-1}/d_j\) and
\(R_{j+1}=d_{j+1}/d_j\) into \eqref{eq:iteration-result}, all intermediate $d_j$'s telescope. 
Using the initial values $H_0(F_R)=H_1(F_R)=1$ and $H_2(F_R)=0$, we obtain
\begin{align*}
 H_{3m}(F_1)=(-1)^m\kappa^{m(3m+1)/2}d_{m-1},\quad
 H_{3m+1}(F_1)=(-1)^m\kappa^{3m(m+1)/2}d_m,\quad
 H_{3m+2}(F_1)=0.
\end{align*}
Replacing \(N\) by \(n+1\) proves \eqref{eq:2021-c39-h0} to \eqref{eq:2021-c39-h2}.

It remains to prove Barry's conjecture concerning the relationship between the coefficient sequences and the Hankel determinants.
Define
\[\mathcal A(x)=\frac{1+(\alpha-2)x^2-(\alpha-2)x^3+(4\alpha-5)x^5-(\alpha-1)(\alpha-2)x^8}{1+3x^3-(\alpha-2)x^6}.
\]
Since $\kappa=2-\alpha$, this becomes
\[\mathcal A(x)=\frac{1-\kappa x^2+\kappa x^3+(3-4\kappa)x^5+\kappa(1-\kappa)x^8}{1+3x^3+\kappa x^6}.
\]
Substituting $y=-x^3$ into
\[\sum_{m\geq0}d_my^m=\frac{1-\kappa y}{1-3y+\kappa y^2}
\]
gives
\[\sum_{m\geq0}(-1)^m d_mx^{3m}=\frac{1+\kappa x^3}{1+3x^3+\kappa x^6}.
\]
A direct calculation shows that
\[\mathcal A(x)=(1-x^2)\frac{1+\kappa x^3}{1+3x^3+\kappa x^6}+(1-\kappa)x^2.
\]
Therefore
\begin{align*}
\mathcal A(x)
&=(1-x^2)\sum_{m\geq0}(-1)^md_mx^{3m}+(1-\kappa)x^2\\
&=\sum_{m\geq0}(-1)^md_mx^{3m}+\sum_{m\geq0}(-1)^{m+1}d_mx^{3m+2}+(1-\kappa)x^2.
\end{align*}
Thus $A_n(\alpha)=[x^n]\mathcal A(x)$ satisfies
\begin{align*}
A_{3m}=(-1)^md_m,\qquad A_{3m+1}=0, \qquad A_{3m+2}=(-1)^{m+1}d_m,\qquad(m\geq1).
\end{align*}
At $m=0$, the additional term $(1-\kappa)x^2$ gives
\[A_2=-d_0+(1-\kappa)=-1+1-\kappa=-\kappa.
\]
Hence
\begin{equation}\label{eq:A-formula}
 A_n(\alpha)=
\begin{cases}
(-1)^m d_m,&n=3m,\\[2mm]
0,&n=3m+1,\\[2mm]
-\kappa,&n=2,\\[2mm]
(-1)^{m+1}d_m,&n=3m+2,\quad m\geq1.
\end{cases}
\end{equation}

Next, let
\[
\mathcal B(x)=
\frac{x(1-x+2x^2-2x^3+3x^4-3x^5+x^6)}
{(1-x)^2(1-x^3)}.
\]
The following partial-fraction decomposition is immediate:
\[\mathcal B(x)=x-x^2+\frac{1}{3(1-x)^3}-\frac{4}{9(1-x)}+\frac{1-5x+4x^2}{9(1-x^3)}.
\]
Using
\begin{align*}
\frac1{(1-x)^3}=\sum_{n\geq0}\binom{n+2}{2}x^n\quad \text{and}\quad \frac{1-5x+4x^2}{1-x^3}=\sum_{m\geq0}\bigl(x^{3m}-5x^{3m+1}+4x^{3m+2}\bigr),
\end{align*}
we can obtain the coefficients $B_n=[x^n]\mathcal B(x)$ according to the residue class of $n$ modulo $3$.
For $n=3m$, there is no contribution from $x-x^2$, and therefore
\begin{align*}
B_{3m}=\frac13\binom{3m+2}{2}-\frac49+\frac19=\frac{3m(m+1)}2.
\end{align*}
For $n=3m+2$, we obtain
\[B_{3m+2}=\frac13\binom{3m+4}{2}-\mathbf 1_{\{m=0\}},
\]
because the term $-x^2$ contributes only when $m=0$. Hence $B_2=1$, whereas, for $m\geq1$,
\begin{align*}
B_{3m+2}=\frac{(3m+3)(3m+4)}{6}=\frac{(m+1)(3m+4)}2.
\end{align*}
For completeness, the remaining residue class is
\[B_1=1,\qquad B_{3m+1}=\frac{m(3m+5)}2\quad(m\geq1),
\]
although these values will not affect $h_{3m+1}$, since $A_{3m+1}=0$.

We now verify $h_n=A_n(\alpha)\kappa^{B_n}$ for all $n\ge 0$.
If $n=3m$, then by \eqref{eq:A-formula} and the formula for $B_{3m}$,
\[A_{3m}(\alpha)\kappa^{B_{3m}}=(-1)^md_m\kappa^{3m(m+1)/2}=h_{3m}.
\]
If $n=3m+1$, then
\[A_{3m+1}(\alpha)\kappa^{B_{3m+1}}=0=h_{3m+1}.
\]
If $n=3m+2$ with $m\geq1$, then
\[A_{3m+2}(\alpha)\kappa^{B_{3m+2}}=(-1)^{m+1}d_m\kappa^{(m+1)(3m+4)/2}=h_{3m+2}.
\]
Finally, when $n=2$, namely $m=0$, we have $A_2=-\kappa$, $B_2=1$, and therefore $A_2\kappa^{B_2}=-\kappa^2$.
Since $d_0=1$, we have $h_2=(-1)^{1}\kappa^{(1)(4)/2}d_0=-\kappa^2$.
Thus the identity also holds at $n=2$.

Combining all cases, we conclude that for every $n\ge 0$, \(h_n=A_n(\alpha)\kappa^{B_n}\).
This completes the proof.
\end{proof}

\subsection{The OEIS pair A136576--A136577}\label{sec:new-a136576}

Let \(u_n\) be [A136576] on OEIS \cite{Sloane23}, and let 
\[ U(x)=\sum_{n\geq0}u_nx^n,\qquad F(x)=\frac{U(x)}x=\sum_{n\geq0}u_{n+1}x^n.
\]
The generating function of $U(x)$ is given by 
\begin{align}\label{Equat-UX-OEIS}
U(x)=\frac{\sqrt{1+4x-4x^2}+4x^2-2x-1}{8x^2}.
\end{align}

\begin{theorem}{\em (Conjectured in \cite[A136576; A136577]{Sloane23})}\label{thm:new-a136576}
The Hankel transform of \((u_{n+1})_{n\geq0}\) is the OEIS sequence [A136577].  If \(h_n=H_{n+1}(F)\), then we have 
\begin{align}
h_{4m}&=(-1)^m4^{4m^2},\qquad  h_{4m+1}=(-1)^m4^{4m^2+2m}, \label{eq:new-a136577-0}\\
h_{4m+2}&=0,\qquad\qquad h_{4m+3}=(-1)^{m+1}4^{4m^2+6m+2},\label{eq:new-a136577-2}
\end{align}
for \(m\geq0\). Thus
\[(h_n)_{n\geq0}=1,1,0,-16,-256,-4096,0,16777216,\ldots.
\]
\end{theorem}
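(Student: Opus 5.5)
The approach is to turn \eqref{Equat-UX-OEIS} into a quadratic continued-fraction equation for \(F\), then run Sulanke--Xin style reductions until the equation reproduces itself up to scalings. The result should be a four-step periodic reduction \(h_{n+4}=c_n h_n\), with an explicit geometric factor \(c_n\), plus a direct check of \(h_0,h_1,h_2,h_3\).

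\emph{Step 1: the quadratic equation.} Isolating the square root in \eqref{Equat-UX-OEIS} and squaring gives
\[
4x^2U^2+(1+2x-4x^2)U-x+x^2=0 .
\]
Substituting \(U=xF\) yields
\[
F(x)=\frac{1-x}{1+2x-4x^2+4x^3F(x)},\qquad F(0)=1 .
\]
This is the shape of \cref{lem:sx} with
\[
(a,b,c,d,e,f)=(1,-1,2,-4,0,4).
\]

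\emph{Step 2: one nondegenerate step.} \cref{lem:sx} gives \(A=1\), \(B=-1\), \(D=-2\). Hence \(H_{n+1}(F)=H_n(G)\), where
\[
G=\frac{1-x}{1+2x-2x^2+x^2(-1+x)G}.
\]
This is already in Wang--Zhang form, but with \(a_1=0\), which is consistent with \(h_2=0\). A further application of \cref{lem:sx} is therefore illegitimate, and the structure cannot come from \cref{prop:wz-universal}. The periodicity must be extracted by hand.

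\emph{Step 3: the degenerate step.} At the stage where the leading coefficient vanishes, I would argue with the bivariate Hankel kernel, exactly as in the proof of \cref{lem:three-step-block}. Write \(1/G=1-x-x^2\widetilde Q\) and apply \cref{prop:unit_congruence}. The kernel of the next series then begins with a constant \(2\times2\) block of the form
\[
\begin{pmatrix}0&\ast\\ \ast&\ast\end{pmatrix},
\]
followed by \(x^2y^2\) times a new kernel \(\mathcal K_E\). This yields a three-index drop with a sign, a power of \(4\), and \(H_{N-3}(E)\), mirroring \eqref{eq:three-step-block}.

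Equivalently, one may invoke the general Sulanke--Xin transformation \(\tau\) \cite[Proposition~4.1]{SulankeXin2008} in its case \(d\ge1\), which gives \(H_n(F)=a^nH_{n-d-1}(\tau(F))\). The aim is to show that \(E\), after a scaling \(x\mapsto \lambda x\) and multiplication by a constant as in \cref{lem:scalings}, coincides with the original \(F\) (or with \(G\)). That closes a cycle consisting of one ordinary step and one three-step block, i.e.\ four indices in total.

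\emph{Step 4: closing the induction.} Combining the steps gives an identity of the form
\[
h_{n+4}=-4^{\,p(n)}h_n ,
\]
with \(p\) linear in \(n\), the exponent coming from the accumulated factors \(u^{N-1}v^{N-3}\) and the scaling. I would fix the exponents by matching with the initial data \(h_0,\dots,h_7=1,1,0,-16,-256,-4096,0,16777216\), computed directly from the series. Induction on \(m\) then gives \eqref{eq:new-a136577-0}--\eqref{eq:new-a136577-2}; in particular \(h_{4m+2}=0\) propagates from \(h_2=0\). Agreement with the first terms of [A136577], together with the proved recurrence, identifies the sequence.

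\emph{Main obstacle.} The hard part is Step 3. One must carry out the degenerate reduction cleanly and verify by explicit elimination that the tail series \(E\) really is a rescaled copy of \(F\) (or of \(G\)), rather than a new member of a larger family. If it is not an exact copy, I would instead introduce a one-parameter family \(F_R\), as in \eqref{eq:2021-c39-tail}, and show that the reduction maps it to itself with a parameter that is constant along the orbit, here presumably \(R\) fixed up to powers of \(4\).
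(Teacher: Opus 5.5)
Your plan matches the paper's proof: after the Sulanke--Xin step $F\to G$, the paper applies \cref{lem:sx} once more to $G$, which is the same as your Jacobi step (it should read $1/G=1+3x-x^2Q$, since $[x]G=-3$). This produces the degenerate series $Q=-4xL$, and the congruence $\mathcal K_{xL}(x,y)/(L(x)L(y))=x+y+2xy-4x^2y^2\mathcal K_E(x,y)$ with $E=(1-x)L$ then gives the three-index drop; $E$ satisfies exactly the equation of $F$, so no rescaling or auxiliary family is needed and $h_n=-16^{\,n-2}h_{n-4}$ for $n\ge4$. One correction: applying \cref{lem:sx} to $G$ is legitimate, because its leading coefficient is $1$; only the following series $Q$, whose constant term is $a_1=0$, forces the kernel argument.
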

\begin{proof}
By Equation \eqref{Equat-UX-OEIS} and \(F(0)=1\), we have
\begin{align*}
 F(x)=\frac{1-x}{1+2x-4x^2+4x^3F(x)}.
\end{align*}
Apply \cref{lem:sx} with $(a,b,c,d,e,f)=(1,-1,2,-4,0,4)$.
Then \(H_N(F)=H_{N-1}(G)\), where
\begin{equation}\label{eq:new-a136576-G}
 G(x)=\frac{1-x}{1+2x-2x^2+x^2(-1+x)G(x)}.
\end{equation}
This satisfies the Wang--Zhang condition (Equation \eqref{eq:wz-form}) with 
\[(a_0,b_0,c_0,d_0,f_0)=(1,-1,2,-2,1),\qquad f_1=1,\qquad a_1=0.
\]
Thus \(\bigl(H_n(G)\bigr)\) is a \((16,-16)\) Somos--4 sequence.
To identify it exactly, however, we need one more determinant reduction.

Applying \cref{lem:sx} once more, now to \eqref{eq:new-a136576-G}, gives, for \(n\geq1\),
\[ H_n(G)=H_{n-1}(Q),\qquad Q(x)=\frac{-4x}{1+2x-4x^2+x^2(-1+x)Q(x)}.
\]
Write \(Q(x)=-4xL(x)\) and \(E(x)=(1-x)L(x)\). 
Therefore, we have $Q(x)=-\frac{4xE(x)}{1-x}$.
We obtain
\[-\frac{4xE(x)}{1-x}=\frac{-4x}{1+2x-4x^2+x^2(-1+x)\left(-\frac{4xE(x)}{1-x}\right)}.
\]
Since $x^2(-1+x)(-\frac{4xE(x)}{1-x})=4x^3E(x)$, we obtain $(1+2x-4x^2)E(x)+4x^3E(x)^2=1-x$. 
Now define
\[G_0(x)=\frac{1-4xE(x)}{1-x}.
\]
We verify that $G_0$ satisfies \eqref{eq:new-a136576-G}. The denominator on the right-hand side of \eqref{eq:new-a136576-G} becomes
\[1+2x-2x^2+x^2(-1+x)G_0=1+2x-3x^2+4x^3E(x).
\]
To show $G_0$ satisfies \eqref{eq:new-a136576-G}, it suffices to prove
\[
(1-4xE(x))(1+2x-3x^2+4x^3E(x))=(1-x)^2. 
\]
Since 
\[1-x = (1+2x-4x^2)E(x)+4x^3E(x)^2,
\]
we obtain
\[
(1-4xE(x))(1+2x-3x^2+4x^3E(x))-(1-x)^2=4x((1-x)-(1+2x-4x^2)E(x)-4x^3E(x)^2)=0.
\]
Since the solution of \eqref{eq:new-a136576-G} as a formal power series is unique, we conclude
\[
G(x)=G_0(x)=\frac{1-4xE(x)}{1-x}.
\]

Now we derive the expression for $E(x)$ in terms of $G(x)$.
We have $4xE(x) = 1-(1-x)G(x)$.
On the other hand, rewrite \eqref{eq:new-a136576-G} as
\[(1+2x-2x^2)G - x^2(1-x)G^2 = 1-x.
\]
We obtain
\begin{align*}
\bigl(1-(1-x)G\bigr)(1+3x-x^2G)&=1+3x - (1+2x-2x^2)G + x^2(1-x)G^2
\\ &=1+3x-(1-x)=4x.
\end{align*}
Therefore, we obtain 
\begin{align}\label{eq:new-a136576-E}
E(x)=\frac{1-(1-x)G(x)}{4x}=\frac{1}{1+3x-x^2G(x)}.
\end{align}

For a series \(A(x)\), use the Hankel kernel
\[\mathcal K_A(x,y)=\frac{xA(x)-yA(y)}{x-y}.
\]
A direct substitution of the equation for \(L(x)\) gives
\[\frac{\mathcal K_{xL}(x,y)}{L(x)L(y)}=x+y+2xy-4x^2y^2\mathcal K_E(x,y).
\]
Multiplication on the two sides by the unit series \(L(x)^{-1}\) and \(L(y)^{-1}\) is a unitriangular congruence.  The coefficient matrix on the right is the direct sum of
\[\begin{pmatrix}0&1\\1&2\end{pmatrix}
\quad\text{and}\quad
 -4\ \text{times the Hankel matrix of }E.
\]
Consequently, for \(m\geq2\),
\begin{equation}\label{eq:new-a136576-block}
 H_m(xL)=-(-4)^{m-2}H_{m-2}(E).
\end{equation}
Using \(Q(x)=-4xL(x)\), \eqref{eq:new-a136576-block}, and \cref{lem:jacobi-step} in \eqref{eq:new-a136576-E}, we obtain
\[ H_n(G)=H_{n-1}(Q)=(-4)^{n-1}H_{n-1}(xL)=-16^{\,n-2}H_{n-3}(E)=-16^{\,n-2}H_{n-4}(G)\]
for \(n\geq4\).  Finally,
\[
 H_0(G)=1,\qquad H_1(G)=1,\qquad
 H_2(G)=0,\qquad H_3(G)=-16.
\]
Iteration of the four-step identity yields
\eqref{eq:new-a136577-0}--\eqref{eq:new-a136577-2}.  Since
\(h_n=H_n(G)\), these are exactly the terms of [A136577] on OEIS \cite{Sloane23}.
\end{proof}

\begin{remark}\label{rem:new-a136577}
Note that the formula of [A136577] on OEIS is given by
$$a(n)=4^{\left\lfloor \frac{n^2}{2}\right\rfloor} \left(\left(\frac{1}{2}-\frac{\sqrt{2}}{2}\right)\cos\left(\frac{3\pi n}{4}\right)+\left(\frac{1}{2}+\frac{\sqrt{2}}{2}\right)\cos\left(\frac{\pi n}{4}\right)\right).$$
The terms and \cref{thm:new-a136576} show that the exponent \(\lfloor n^2/2\rfloor\) currently printed in the closed formula for [A136577] must be \(\lfloor n^2/4\rfloor\). The former already gives \(-256\), rather than \(-16\), at \(n=3\).
Therefore, we have
$$a(n)=4^{\left\lfloor \frac{n^2}{4}\right\rfloor} \left(\left(\frac{1}{2}-\frac{\sqrt{2}}{2}\right)\cos\left(\frac{3\pi n}{4}\right)+\left(\frac{1}{2}+\frac{\sqrt{2}}{2}\right)\cos\left(\frac{\pi n}{4}\right)\right).$$
\end{remark}

\section{By-products and concluding remarks}\label{sec:new-motzkin}

Next we prove a byproduct of this paper, namely a conjecture concerning generating functions.

Let \(R_{n,k}\) be the left-justified Pascal rhombus, whose bivariate generating function is
\begin{equation}\label{eq:new-pascal-rhombus}
 P(x,z)=\sum_{n,k\geq0}R_{n,k}x^nz^k=\frac1{1-x-xz-xz^2-x^2z^2}.
\end{equation}
Write
\[ E(x,y)=\sum_{n,k\geq0}R_{n,2k}x^ny^k
\]
for the even-column bisection.
The assertion in \cite[Conjecture~12]{Barry2023Motzkin} contains a sign error as printed; we prove the corrected statement.

\begin{proposition}{\em (Conjectured in \cite[Conjecture~12]{Barry2023Motzkin})}\label{thm:new-conjecture-12}
The generating function of the bisection \((R_{n,2k})\) is
\begin{equation}\label{eq:new-conjecture-12-corrected}
 E(x,y)=\frac{1-(y+1)x-yx^2}{1-2(y+1)x+(1-y+y^2)x^2+2y(y+1)x^3+y^2x^4}.
\end{equation}
\end{proposition}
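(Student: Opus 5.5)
The plan is to extract the even part in $z$ of the rational function $P(x,z)$ in \eqref{eq:new-pascal-rhombus} directly. Write $P(x,z)=1/D(z)$ with
\[
D(z)=1-x-xz-(x+x^2)z^2 .
\]
By definition
\[
E(x,z^2)=\tfrac12\bigl(P(x,z)+P(x,-z)\bigr)=\frac{D(z)+D(-z)}{2\,D(z)D(-z)} .
\]
Both sides are formal power series in $x$ whose coefficients are polynomials in $z$, so the identity is legitimate. Once the right-hand side is an even rational function of $z$, replacing $z^2$ by $y$ gives $E(x,y)$.

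For the numerator we have
\[
\tfrac12\bigl(D(z)+D(-z)\bigr)=1-x-(x+x^2)z^2 .
\]
Setting $z^2=y$ gives $1-x-xy-x^2y$, which is exactly $1-(y+1)x-yx^2$.

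For the denominator, write $D(\pm z)=a\mp xz$ with $a=1-x-(x+x^2)z^2$. Then
\[
D(z)D(-z)=a^2-x^2z^2=\bigl(1-x-(x+x^2)y\bigr)^2-x^2y .
\]
Expanding in powers of $x$:
\begin{itemize}
\item the constant term is $1$;
\item the $x$-coefficient is $-2-2y$;
\item the $x^2$-coefficient is $1+2y+y^2-2y-y$, which equals $1-y+y^2$;
\item the $x^3$-coefficient is $2y+2y^2$, which equals $2y(y+1)$;
\item the $x^4$-coefficient is $y^2$.
\end{itemize}
This reproduces the denominator in \eqref{eq:new-conjecture-12-corrected}.

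The only point needing care is the justification of the bisection step. The coefficient of $x^n$ in $P$ is a polynomial in $z$ of degree at most $2n$, so averaging $P(x,z)$ and $P(x,-z)$ kills exactly the odd powers of $z$, and the substitution $z^2\mapsto y$ is then well defined coefficientwise. Beyond that, the only work is bookkeeping the expansion. To settle the sign discrepancy with the printed version in \cite{Barry2023Motzkin}, I would compare the first few coefficients. These are $R_{0,0}=1$, $R_{1,0}=1$, $R_{1,2}=1$ and $R_{2,2}=4$, computed directly from the recurrence $R_{n,k}=R_{n-1,k}+R_{n-1,k-1}+R_{n-1,k-2}+R_{n-2,k-2}$ (so $R_{2,2}=R_{1,2}+R_{1,1}+R_{1,0}+R_{0,0}$).
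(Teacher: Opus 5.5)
Your proof is correct and follows the paper's argument exactly. Like the paper, you apply the even-part filter $\tfrac12\bigl(P(x,z)+P(x,-z)\bigr)$, write the two denominators as $a\mp xz$ with $a=1-(1+z^2)x-z^2x^2$ so that $E(x,z^2)=a/(a^2-x^2z^2)$, and expand after substituting $y=z^2$; your coefficient check ($R_{1,2}=1$, $R_{2,2}=4$) is a harmless sanity test that the paper omits.
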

\begin{proof}
For this identity, apply the even-part filter to \eqref{eq:new-pascal-rhombus}. Set $A=1-(1+z^2)x-z^2x^2$.
Then
\[ P(x,z)=\frac1{A-xz},\qquad P(x,-z)=\frac1{A+xz},
\]
and hence
\[ E(x,z^2)=\frac{P(x,z)+P(x,-z)}2=\frac{A}{A^2-x^2z^2}.
\]
On setting \(y=z^2\), the numerator is \(1-(1+y)x-yx^2\), while
\[A^2-x^2z^2=1-2(1+y)x+(1-y+y^2)x^2+2y(1+y)x^3+y^2x^4.
\]
This proves \eqref{eq:new-conjecture-12-corrected}.
\end{proof}

\begin{corollary}\label{cor:new-conjecture-12-sums}
Define the row and diagonal sums by
\[r_n=\sum_{k=0}^{n}R_{n,2k},\qquad e_m=\sum_{k=0}^{\lfloor m/2\rfloor}R_{m-k,2k}.
\]
Then we have generating functions
\begin{align*}
\sum_{n\geq0}r_nx^n=E(x,1) &=\frac{1-2x-x^2}{1-4x+x^2+4x^3+x^4},\\
\sum_{m\geq0}e_mx^m=E(x,x)&=\frac{1-x-x^2-x^3}{(1+x+x^2)(1-3x+x^2+x^3+x^4)}.
\end{align*}
\end{corollary}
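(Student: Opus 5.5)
The plan is to specialize the rational identity of \cref{thm:new-conjecture-12} at the two substitutions and then factor or simplify the resulting rational functions.

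\emph{Row sums.} Since $r_n=\sum_k R_{n,2k}$ is the coefficient of $x^n$ in $E(x,y)$ summed over all powers of $y$, its generating function is $E(x,1)$. Substituting $y=1$ into \eqref{eq:new-conjecture-12-corrected} gives
\begin{itemize}
\item numerator $1-2x-x^2$;
\item denominator $1-4x+x^2+4x^3+x^4$.
\end{itemize}
These agree with the claim directly, so this part is immediate.

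\emph{Diagonal sums.} By definition, $E(x,x)=\sum_{n,k}R_{n,2k}x^{n+k}$. Setting $m=n+k$, the coefficient of $x^m$ is $\sum_k R_{m-k,2k}$, where $k$ ranges over $k\le m$. To match the stated range $k\le\lfloor m/2\rfloor$, I need $R_{m-k,2k}=0$ whenever $2k>m-k$. The rhombus support gives exactly this: since every monomial in $1/(1-x-xz-xz^2-x^2z^2)$ has $z$-degree at most twice its $x$-degree, $R_{n,j}=0$ for $j>2n$. Applying this to $R_{m-k,2k}$, it vanishes when $2k>2(m-k)$, i.e. $k>m/2$, as required.

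Substituting $y=x$ into \eqref{eq:new-conjecture-12-corrected}:
\begin{itemize}
\item the numerator becomes $1-x-x^2-x^3$;
\item the denominator becomes $1-2x-2x^2+(1-x+x^2)x^2+2x(x+1)x^3+x^6=1-2x-x^2-x^3+3x^4+2x^5+x^6$.
\end{itemize}
The remaining task is to verify the factorization $(1+x+x^2)(1-3x+x^2+x^3+x^4)$. Expanding the product term by term:
\begin{itemize}
\item constant term: $1$;
\item $x$: $-3+1=-2$;
\item $x^2$: $1-3+1=-1$;
\item $x^3$: $1+1-3=-1$;
\item $x^4$: $1+1+1=3$;
\item $x^5$: $1+1=2$;
\item $x^6$: $1$.
\end{itemize}
These coefficients match the denominator, which completes the argument.

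The only step that needs any care is justifying the summation range for $e_m$ through the support bound $R_{n,j}=0$ for $j>2n$. Everything else is direct substitution.
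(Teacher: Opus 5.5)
Your proposal is correct and follows the paper's route: substitute $y=1$ and $y=x$ into the formula for $E(x,y)$ from \cref{thm:new-conjecture-12}. Your support bound $R_{n,j}=0$ for $j>2n$ and your explicit expansion of $(1+x+x^2)(1-3x+x^2+x^3+x^4)$ fill in details the paper leaves implicit; the same bound also justifies the range $k\le n$ in $r_n$. One small slip: the vanishing you need (and then correctly verify) is $R_{m-k,2k}=0$ for $k>m/2$, not ``whenever $2k>m-k$''. That condition would be false; for instance $R_{1,2}=1$ appears in $e_2$.
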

\begin{proof}
These are the substitutions \(y=1\) and \(y=x\) in \eqref{eq:new-conjecture-12-corrected}.
\end{proof}

To the best of our knowledge, we have proved almost all of the currently known conjectures related to both the $(\alpha,\beta)$ Somos-4 sequences and Hankel determinants. The resolution of these conjectures can deepen the reader's understanding of Somos-$k$ sequences. In particular, it establishes a solid groundwork for attacking \cref{Openproblem}.

This paper can also serve as a nice application of the Sulanke--Xin quadratic transformation (see \cref{lem:sx}; also see \cite{SulankeXin2008}) and the Wang--Zhang condition (see \cref{thm:wz}). Further applications are worth discovering.






\noindent
{\small \textbf{Acknowledgments:}}
The authors would like to express sincere gratitude for all the suggestions that have improved the presentation of this paper.
The authors thank Professor Guoce Xin for bringing to our attention the conjecture concerning sequence [A136577] on OEIS.
Feihu Liu was partially supported by the Postdoctoral Fellowship Program and China Postdoctoral Science Foundation (No. BX2026002).
Ying Wang was partially supported by the Natural Science Foundation of Henan Province (Grant No. [262300422649]).

\end{document}